\renewcommand{\baselinestretch}{\baselinestretch}
\renewcommand{\baselinestretch}{1.1}
\numberwithin{equation}{section}
\newtheorem{thm}{Theorem}[section]
\newtheorem{lem}[thm]{Lemma}
\newtheorem{prop}[thm]{Proposition}
\theoremstyle{definition}
\newtheorem{defn}[thm]{Definition}
\theoremstyle{remark}
\newtheorem{rmk}[thm]{Remark}
\newtheorem{exam}[thm]{Example}
\numberwithin{equation}{section}
\newcommand{\gen}{\text{gen}}
\newcommand{\spn}{\text{spn}}
\newcommand{\ord}{\text{ord}}
\newcommand{\diag}{\text{diag}}
\newcommand{\z}{{\mathbb Z}}
\newcommand{\q}{{\mathbb Q}}
\newcommand{\f}{{\mathbb F}}
\newcommand{\p}{{\mathbb P}}
\newcommand{\T}{{\tilde{T}}}
\newcommand{\gr}{{\mathfrak{G}_{L,p}(m)}}
\begin{document}

\title[A generalization of Watson transformation]{A generalization of Watson transformation and representations of ternary quadratic forms}

\author{Jangwon Ju, Inhwan Lee and Byeong-Kweon Oh}

\address{Department of Mathematical Sciences, Seoul National University, Seoul 151-747, Korea}
\email{jjw@snu.ac.kr}

\address{Department of Mathematical Sciences, Seoul National University, Seoul 151-747, Korea}
\email{lih0905@snu.ac.kr}

\address{Department of Mathematical Sciences and Research Institute of Mathematics, Seoul National University, Seoul 151-747, Korea}
\email{bkoh@snu.ac.kr}

\thanks{This research was supported by Basic Science Research Program through the National Research Foundation of Korea(NRF) funded by the Ministry of Education(NRF-2014R1A1A2056296).}

\subjclass[2000]{Primary 11E12, 11E20} \keywords{Representaions of ternary quadratic forms, Watson transformation}


\begin{abstract} Let $L$ be a positive definite (non-classic) ternary $\z$-lattice  and let $p$ be a prime such that a $\frac 12\z_p$-modular component of $L_p$  is nonzero isotropic and $4\cdot dL$ is not divisible by $p$.  For a nonnegative integer $m$, let $\mathcal G_{L,p}(m)$ be the genus with discriminant $p^m\cdot dL$ on the quadratic space $L^{p^m}\otimes \q$ such that for each lattice $T \in \mathcal G_{L,p}(m)$,   a $\frac 12\z_p$-modular component of $T_p$  is nonzero isotropic, and $T_q$ is isometric to $(L^{p^m})_q$ for any prime $q$ different from $p$. Let $r(n,M)$ be the number of representations of an integer $n$ by a $\z$-lattice $M$.  In this article, we show that  if $m \le 2$ and $n$ is divisible by $p$ only when $m=2$, then  for any $T \in \mathcal G_{L,p}(m)$, $r(n,T)$  can be written as a linear summation of $r(pn,S_i)$ and $r(p^3n,S_i)$ for $S_i \in  \mathcal G_{L,p}(m+1)$ with an extra term in some special case.  We provide a simple criterion on when the extra term is necessary, and we compute the extra term explicitly.  We also give a recursive relation to compute $r(n,T)$, for any $T \in \mathcal G_{L,p}(m)$, by using the number of representations of some integers by lattices in $\mathcal G_{L,p}(m+1)$ for an arbitrary integer $m$.    
\end{abstract}

\maketitle

\section{Introduction} 

For a positive definite (non-classic) integral ternary quadratic form 
$$
f(x_1,x_2,x_3)=\sum_{1 \le i\le j\le 3}a_{ij}x_ix_j \qquad (a_{ij} \in \z)  
$$ 
and an integer $n$, we define a set $R(n,f)=\{ (x_1,x_2,x_3) \in \z^3 : f(x_1,x_2,x_3)=n\}$, and 
 $r(n,f)=\vert R(n,f)\vert$. It is well known that $R(n,f)$ is always  finite  if $f$ is positive definite.
The theta series $\theta_f(z)$ of $f$ is defined by 
$$
\theta_f(z)=\sum_{n=0}^{\infty}r(n,f)e^{2\pi inz},
$$ 
which is a modular form of weight $\frac32$ and some character with respect to a certain congruence subgroup. Finding a closed formula for $r(n,f)$ or finding all integers $n$ such that $r(n,f) \ne 0$ for an arbitrary ternary form $f$ are quite old problems which are still widely open. As a simplest case, Gauss showed that if $f$ is a sum of three squares, then $r(n,f)$ is a multiple of the Hurwitz-Kronecker class number. 
 
 Though it seems to be quite difficult to find a closed formula for $r(n,f)$, some various relations between $r(n,f)$'s are known. One of the important relations is the Minkowski-Siegel formula.      
Let $O(f)$ be the group of isometries of $f$ and $o(f)=\vert O(f)\vert$. The weight $w(f)$ of $f$ is defined by 
$w(f)=\sum_{ [f'] \in \text{gen}(f)} \frac 1{o(f')}$,  where $[f']$ is the equivalence class containing $f'$. The Minkowski-Siegel formula says that the weighted sum of the representations by quadratic forms in the genus is, in principle, the product of local densities, that is,   
$$
\frac 1{w(f)}\sum_{[f'] \in \text{gen}(f)} \frac {r(n,f')}{o(f')}=c^*\prod_{p} \alpha_p(n,f_p),
$$
where the constant $c^*$ can easily be computable and $\alpha_p$ is the local density depending only on the local structure of $f$ over $\z_p$. Hence if the class number of $f$ is one, then we have a closed formula on $r(n,f)$. As a natural modification of the    Minkowski-Siegel formula, it was proved in \cite {k} and \cite {w} that the weighted sum of the representations of quadratic forms in the spinor genus is also equal to the product of local densities except spinor exceptional integers (see also \cite {sp3} for spinor exceptional integers).

For any prime $p \nmid 2df$, the action of Hecke operators $T(p^2)$ on the theta series of the quadratic form $f$ gives  
$$
r(p^2n,f)+\left(\frac{-ndf}{p}\right)r(n,f)+p\cdot r\left(\frac n{p^2},f\right)=\sum_{[f'] \in \gen(f)} \frac {r^*(p^2f',f)}{o(f')} r(n,f').
$$
Here, if $n$ is not divisible by $p^2$, then $r\left(\frac n{p^2},f\right)=0$, and $r^*(p^2f',f)$ is the number of primitive representations of $p^2f'$ by $f$.  For details, see \cite {a} and \cite{e}.

Another important relation comes from the Watson transformation. If a unimodular component of the ternary form $f$ in a Jordan decomposition  over $\z_p$ is anisotropic, then one may easily show that 
$$
r(pn,f)=r(pn,\Lambda_p(f)),
$$ 
where $\Lambda_p(f)$ is defined in Section 2. Hence the theta series of $f$ completely determines the theta series of $\lambda_p(f)$. Unfortunately if a unimodular component of the ternary form $f$ over $\z_p$ is isotropic, one cannot expect such a nice relation. In this article, we consider the case when a unimodular component of the ternary form $f$ over $\z_p$ is isotropic.                 

The subsequence discussion will be conducted in the more adapted geometric language of quadratic spaces and lattices.  
The term ``lattice" will always refer to a positive definite non-classic integral $\z$-lattice  on an
$n$-dimensional positive definite quadratic space over $\q$. Here, a $\z$-lattice is said to be  {\it non-classic}  if the norm ideal $\mathfrak n(L)$ of $L$ is contained in $\z$.   
Let $L=\z x_1+\z x_2+\cdots +\z x_n$ be a $\z$-lattice of rank $n$.
 We write
 $$
 L \simeq (B(x_i,x_j)).
$$
The right hand side matrix is called a {\it matrix presentation} of
$L$. Any unexplained notations and terminologies can be found in
\cite{ki} or \cite{om2}.

Let $V$ be a (positive definite) ternary quadratic space and
let $L$ be a (non-classic) ternary $\z$-lattice on $V$. Let $p$ be a prime such that  $L_p \simeq \begin{pmatrix} 0&\frac12\\ \frac12&0\end{pmatrix} \perp \langle \epsilon \rangle$, where $\epsilon \in \z_p^{\times}$. For any nonnegative integer $m$, let $\mathcal G_{L,p}(m)$ be a genus on a quadratic space $W$ such that each $\z$-lattice $T \in \mathcal G_{L,p}(m)$ satisfies
$$
 T_p \simeq  \begin{pmatrix} 0&\frac12\\ \frac12&0\end{pmatrix} \perp \langle \epsilon p^m \rangle \quad \text{and} \quad  T_q \simeq (L^{p^m})_q \ \text{ for any $q \ne p$}.
$$
Here $W=V$ if $m$ is even, $W=V^p$ otherwise.  
The aim of this article is to show that if $T\in  \mathcal G_{L,p}(m)$ for $m =0$ or $1$, then there are rational numbers $a_i, b_i$ such that
$$
r(n,T)=\sum_{[S_i] \in \mathcal G_{L,p}(m+1)} \left(a_ir(pn,S_i)+b_ir(p^3n,S_i)\right)+(\text{some extra term}).
$$
In Section 4, we prove this statement in each case and compute the rational numbers $a_i$'s, $b_i$'s and the extra term explicitly.  For the case when $m=2$, we give an example such that the above statement does not hold, and prove that the above statement still holds for $m=2$ if we additionally assume that $n$ is divisible by $p$. In the case when
 $m\ge 3$, we show that under some restriction, the above statement holds if we replace $r(n,T)$ by $r(p^2n,T)-pr(n,T)$, and for any integer $n$ not divisible by $p$, both $r(n,T)$ and $r(pn,T)$ can be written as a linear summation of  $r(pn, S)$'s  and $r(n,S)$'s, respectively,  for $S \in \mathcal G_{L,p}(m+1)$.   

 In some cases, the extra term in the above equation can be removed. To determine when it happens, we need to know some structure of the graph $\mathfrak G_{L,p}(m)$
 defined by the equivalence classes in $\mathcal G_{L,p}(m)$ and $\mathcal G_{L,p}(m+1)$. The definition and basic facts on the graph  $\mathfrak G_{L,p}(m)$ will be treated in Section 3.      
	
For any integer $a$, we say that $\frac a2$ is divisible by a prme $p$ if $p$ is odd and $a \equiv 0 \pmod p$, or $p=2$ and $a \equiv 0 \pmod 4$.  	

\section{A generalization of Watson transformation}

Let $L$ be a ternary $\z$-lattice. Recall that we are assuming that a (quadratic) $\z$-lattice is  non-classic and positive definite. For any prime $p$, the $\lambda_p$-transformation (or Watson transformation) is defined as follows: 
$$
\Lambda_p(L)= \{ x \in L : Q(x + z) \equiv Q(z) \  (\text{mod} \ p) \mbox{ for
all $z \in L$}\}.
$$
 Let $\lambda_p(L)$ be the primitive
lattice obtained from $\Lambda_p(L)$ by scaling $V=L\otimes \mathbb Q$ by a suitable
rational number. Assume that $p$ is odd. If the unimodular component
in a Jordan decomposition of $L_p$ is anisotropic, it is well known that
\begin{equation}
R(pn,L)=R(pn,\Lambda_p(L)).
\end{equation}
Hence $r(n,\lambda_p(L))=r(pn,L)$ if $p\z_p$-modular component of $L_p$ is nonzero, and  $r(n,\lambda_p(L))=r(p^2n,L)$ otherwise. One may easily show that (2.1) still holds for $p=2$ unless
$$
L_2 \simeq \begin{pmatrix} 0&\frac12\\ \frac12&0\end{pmatrix} \perp \langle \alpha\rangle, \ \ (\alpha \in \z_2).
$$
The readers are referred to \cite{co0} for more properties of the operators $\Lambda_p$.

Let $L$ be a ternary $\z$-lattice and let $p$ be a fixed prime. In the remaining of this section, we always assume that  in a Jordan splitting of $L_p$,
\begin{equation}
\text {\it the $\frac12 \z_p$-modular component is non-zero isotropic.}
\end{equation} 
The purpose of this article is to find similar results to (2.1) under this assumption. To do this, we generalize Watson's transformation in various directions. Since 
$$
\begin{pmatrix} 1&\frac12\\ \frac12&1\end{pmatrix} \perp \langle \delta \rangle \simeq \begin{pmatrix} 0&\frac12\\ \frac12&0\end{pmatrix} \perp \langle 5\delta \rangle \ \ \text{over $\z_2$}
$$
for any $\delta \in \z_2^{\times}$, any $\z$-lattice $L$ such that $L_2$ is isometric to the above will also be considered when $p=2$.

\begin{defn}  \label{defn1} Assume that $p$ is odd. For $\epsilon=0$ or $\pm1$, we define
$$
S_p(\epsilon,L)=\left\lbrace x \in L ~\Bigg{|}~ \left(\frac {Q(x)}p\right)=\epsilon\ \right\rbrace.
$$
We also define $S_2(0,L)=\{ x \in L : Q(x)\equiv 0 \pmod 2\}$ and $S_2(*,L)=L-S_2(0,L)$.   
\end{defn}

Let $\mathfrak B=\{x_1,x_2,x_3\}$ be a (ordered) basis of a ternary $\z$-lattice $L$ and $p$ be a prime. We define a natural projection map  
$$
\phi_{\mathfrak B}: L-pL \to (L/pL)^{*} \to \mathbb P^2,
$$
 where $\mathbb P^2$ is the $2$-dimensional projective space over the  finite field $\mathbb F_p$. The set $\phi_{\mathfrak B}(S_p(\epsilon,L)-pL)$ is denoted by
 $s_p^{\mathfrak B}(\epsilon,L)$ for any $\epsilon \in \{ 0, 1, -1\}$ if $p$ is odd and $\epsilon \in \{ 0,*\}$ otherwise. If the basis $\mathfrak B$ is obvious, we will omit it.  
For each element $\mathbf s \in \p^2$, we define a $\z$-sublattice $L_{\mathbf s}:=\phi_{\mathfrak B}^{-1}({\mathbf s}) \cup pL$ of $L$, and
$$
\Omega_p(\epsilon,L)=\{ L_{\mathbf s} \mid \mathbf s \in  s_p^{\mathfrak B}(\epsilon,L)\}.
$$
Note that if  $T : \mathfrak B \to \mathfrak C$ is the transition  matrix between ordered bases, then one may easily show
that $T(s_p^{\mathfrak B}(\epsilon,L))=s_p^{\mathfrak C}(\epsilon,L)$. Hence the set $\Omega_p(\epsilon,L)$ is independent of  choices of the basis for $L$.

\begin{lem} \label{1} Assume that a ternary $\z$-lattice  $L$ and a prime $p$ satisfies the condition (2.2).  
If $4dL_p \in \z_p^{\times}$, then 
$$
\vert s_p(0,L)\vert=p+1, \ \ \vert s_p(\pm1,L)\vert=\frac {p\left(p\pm\left(\frac {-dL}p\right)\right)}2 \quad \text{and} \quad s_2(*,L) = 4
$$
and
$$
\vert s_p(0,L)\vert=2p+1, \ \ \vert s_p(1,L)\vert=\vert s_p(-1,L)\vert=\frac {p(p-1)}2 \quad \text{and} \quad s_2(*,L) = 2,
$$
otherwise. 
\end{lem}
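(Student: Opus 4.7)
The plan is to fix a basis of $L$ adapted to the Jordan splitting of $L_p$, reduce the quadratic form $Q$ of $L$ modulo $p$, and count projective points on $\mathbb{P}^2(\mathbb{F}_p)$ according to the value of the Legendre symbol. By hypothesis (2.2), such a basis gives $Q(c_1,c_2,c_3) \equiv c_1c_2 + \epsilon p^m c_3^2 \pmod p$ for some $\epsilon \in \mathbb{Z}_p^{\times}$ and $m \ge 0$, and the condition $4\cdot dL_p \in \mathbb{Z}_p^{\times}$ is exactly $m=0$. Since $s_p(\cdot,L)$ depends only on this reduction $\bar Q$, the problem reduces to counting in $\mathbb{F}_p^3$.

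For odd $p$ with $m=0$ the reduction $\bar Q$ is a nondegenerate isotropic ternary form, so the projective conic $\bar Q=0$ has the standard $p+1$ points, giving $|s_p(0,L)|=p+1$. For each nonzero $c \in \mathbb{F}_p$ the affine count of $c_1c_2+\epsilon c_3^2=c$ splits by $c_2=0$ (solvable iff $c/\epsilon$ is a square, contributing $p(1+\bigl(\tfrac{c/\epsilon}{p}\bigr))$ points) and $c_2\ne 0$ (contributing $p(p-1)$ points for every $c$); summing over squares or nonsquares and dividing by $p-1$ yields the projective counts. The sign in $\tfrac{p(p\pm (-dL/p))}{2}$ follows from the hyperbolic plane having determinant $-1/4$, so $-dL \equiv \epsilon/4 \pmod{(\mathbb{Z}_p^{\times})^2}$ and $\bigl(\tfrac{-dL}{p}\bigr)=\bigl(\tfrac{\epsilon}{p}\bigr)$. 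For odd $p$ with $m\ge 1$, the reduction degenerates to $\bar Q=c_1c_2$; its zero set in $\mathbb{P}^2$ is the union of two lines meeting in one point, contributing $2p+1$ projective points, and for each nonzero $c$ the count of $c_1c_2=c$ is uniformly $p(p-1)$ (with $c_3$ free), producing the common projective value $p(p-1)/2$ for both squares and nonsquares.

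For $p=2$ I enumerate directly the seven nonzero vectors of $\mathbb{F}_2^3$ against $\bar Q \equiv c_1c_2+c_3^2$ (when $m=0$) or $\bar Q \equiv c_1c_2$ (otherwise), obtaining $4$ and $2$ vectors with odd value; since each projective point of $\mathbb{P}^2(\mathbb{F}_2)$ is represented by a single nonzero vector, these are the asserted values of $|s_2(*,L)|$. The alternative $2$-adic shape $\begin{pmatrix} 1 & 1/2 \\ 1/2 & 1 \end{pmatrix}\perp \langle \delta\rangle$ mentioned just after (2.2) is treated by the same direct enumeration on its reduction $c_1^2+c_1c_2+c_2^2+c_3^2 \equiv c_1+c_1c_2+c_2+c_3 \pmod 2$, which again produces exactly four odd-value vectors. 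The only real subtlety is the Legendre-symbol bookkeeping in the first case: one must verify that the sign $\bigl(\tfrac{-dL}{p}\bigr)$ matches $\bigl(\tfrac{\epsilon}{p}\bigr)$ independently of the unit representative chosen for the orthogonal summand, but this is immediate once the discriminant of the hyperbolic plane is tracked correctly.
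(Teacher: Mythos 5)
Your proof is correct and follows essentially the same route as the paper's: fix a basis adapted to the Jordan splitting via weak approximation, reduce $Q$ modulo $p$, and count the points of $\mathbb{P}^2(\mathbb{F}_p)$ on which the reduced form takes the prescribed quadratic character. The only difference is that you carry out the unimodular count and the $p=2$ enumerations explicitly, where the paper cites Theorem 1.3.2 of Kitaoka for the former and dismisses the latter as trivial.
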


\begin{proof} Since everything is trivial for $p=2$, we assume that $p$ is odd. 
For the unimodular case, see Theorem 1.3.2 of \cite{ki}. Assume that $L_p$ is not unimodular. 
Fix an ordered basis $\mathfrak B=\{x_1,x_2,x_3\}$ of $L$ such that 
$$
(B(x_i,x_j)) \equiv \diag(1,-1,p^{\ord_p(dL)}\delta) \pmod {p^{\ord_p(dL)+1}},
$$
for some $\delta \in \z-p\z$. Note that such a basis always exists by the Weak Approximation Theorem. 
Assume that $x=a_1x_1+a_2x_2+a_3x_3 \in S_p(0,L)$. Then $a_1^2 \equiv a_2^2 \pmod p$. Therefore 
$$
s_p^{\mathfrak B}(0,L)=\{ (0,0,1),(1,\pm 1,d)\}, \qquad \text{where } d \in \mathbb F_p.
$$
The lemma follows from this. The case when $\epsilon=\pm 1$ can be done in a similar manner.    
\end{proof}

\begin{lem} \label{local} Under the same assumptions given above, assume that $p$ is an odd prime. 
If $\epsilon \ne 0$ or $\epsilon=0$ and $L_p$ is unimodular, then every $\z$-lattice $M \in \Omega_p(\epsilon,L)$ is contained in one genus. 
Furthermore for the former case, 
$$
M_q \simeq \begin{cases} \langle \delta,-p^2\delta,-p^2dL\rangle\quad &\text{if $q=p$,} \\
                                 L_q   &\text{otherwise,} \end{cases}
$$
where $\delta \in \z_p^{\times}$ such that $\left(\frac \delta p\right)=\epsilon$ and, 
$$
M_q \simeq \begin{cases} \langle p,-p,-p^2dL\rangle\quad &\text{if $q=p$,} \\
                                  L_q   &\text{otherwise,} \end{cases}
$$
for the latter case. If $L_p$ is not unimodular and $\epsilon=0$ then  every $\z$-lattice $M \in \Omega_p(0,L)$ is 
exactly contained in two genera. More precisely
$$
M_q \simeq \begin{cases} \langle p^2,-p^2,-dL\rangle \ \ \text{or} \ \  \langle p,-p,-p^2dL\rangle \quad &\text{if $q=p$,} \\
                                 L_q   &\text{otherwise.} \end{cases}
$$
 \end{lem}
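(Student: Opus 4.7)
The plan is to exploit the fact that $L_{\mathbf s}$ is a rank-$3$ sublattice of $L$ with $pL \subset L_{\mathbf s}$ and $[L:L_{\mathbf s}] = p^2$, which immediately gives $(L_{\mathbf s})_q = L_q$ for every prime $q \ne p$ (since $p$ is a unit in $\z_q$). Hence only the $\z_p$-isometry class of $L_{\mathbf s, p}$ needs to be determined; the rest of the genus data is automatic, and the number of genera that arise is simply the number of distinct $\z_p$-isometry classes produced by the construction.

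I would fix a $\z_p$-basis $\{e_1, e_2, e_3\}$ of $L_p$ adapted to the Jordan splitting $L_p \simeq \mathbb H \perp \langle u p^k\rangle$, that is, with $Q(e_1) = Q(e_2) = 0$, $B(e_1,e_2) = \tfrac12$, $Q(e_3) = u p^k$, $u \in \z_p^{\times}$, and $k = \ord_p(dL_p)$. For a primitive representative $v = a e_1 + b e_2 + c e_3$ of $\mathbf s$, the lattice $L_{\mathbf s, p}$ admits a $\z_p$-basis consisting of $v$ together with two of the $pe_i$, and its Gram matrix is then diagonalized by an explicit Gram--Schmidt step over $\z_p$. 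When $\epsilon \ne 0$, $\delta := Q(v)$ is a unit of square class $\epsilon$, and the standard orthogonalization of $\{v, pe_2, pe_3\}$ produces $\diag(\delta, -p^2/(4\delta), up^{k+2})$, which modulo $(\z_p^{\times})^2$ coincides with $\langle \delta, -p^2\delta, -p^2 dL\rangle$. When $\epsilon = 0$ and $L_p$ is unimodular ($k=0$), the replacement $v' = v - (Q(v)/p)(pe_2)$---legitimate because $Q(v) \in p\z_p$---gives $Q(v') = 0$, and a second elementary orthogonalization removes the remaining off-diagonal entry to yield a rescaled hyperbolic plane plus $\langle up^2\rangle$, i.e.\ $\langle p, -p, -p^2 dL\rangle$.

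The case $\epsilon = 0$ with $L_p$ not unimodular ($k \ge 1$) is precisely where two genera appear. Here $Q(v) \equiv ab \pmod p$, and the points of $s_p^{\mathfrak B}(0, L)$ split into two sub-types: if exactly one of $a, b$ vanishes mod $p$, the unimodular computation carries over verbatim and produces $\langle p, -p, -p^2 dL\rangle$ with Jordan type $(1,1,k+2)$; if $a \equiv b \equiv 0 \pmod p$, then $\mathbf s = [0:0:1]$ and $v = e_3$, so the basis $\{e_3, pe_1, pe_2\}$ is already orthogonal and yields $\langle u p^k\rangle \perp p^2\mathbb H \simeq \langle p^2, -p^2, -dL\rangle$ with Jordan type $(2,2,k)$. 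Since these Jordan types are distinct for $k \ge 1$, the two sub-types genuinely land in distinct genera, which accounts for the final clause of the lemma.

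The main obstacle is the bookkeeping for each Gram--Schmidt step: one has to check sub-case by sub-case that the coefficients of the form $B(pe_i, v)/Q(v)$ and $Q(v)/p$ all lie in $\z_p$, which rests on $B(pe_i, v) \in p\z_p$ combined with the precise $p$-adic valuation of $Q(v)$ dictated by $\epsilon$ and by the sub-type. Once the diagonal entries are in hand, matching them with the expressions in the statement amounts to the replacements $-p^2 dL \equiv u p^{k+2}$ and $-dL \equiv u p^k$ modulo $(\z_p^{\times})^2$, both of which follow at once from $dL_p = -u p^k/4$.
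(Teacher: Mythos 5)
Your argument is correct and follows essentially the same route as the paper's proof: reduce to $M_p=\z_p v+pL_p$, split off $\langle Q(v)\rangle$ when $Q(v)\in\z_p^{\times}$, produce an isotropic vector pairing unimodularly with $pe_2$ when $Q(v)\in p\z_p$ and $L_p$ is unimodular, and in the remaining case separate the point $[0:0:1]$ from the others to get the two local types (the paper uses a diagonal basis $\mathrm{diag}(1,-1,p^k\delta)$ where you use a hyperbolic one, which is immaterial). The only imprecision is that the substitution $v'=v-(Q(v)/p)(pe_2)$ kills $Q(v')$ only after normalizing the $e_1$-coefficient of $v$ to $1$ (possible since it must be a unit in that subcase), but this is cosmetic.
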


\begin{proof} Let $L=\z x_1+\z x_2+\z x_3$ and $M \in \Omega_p(\epsilon,L)$. 
Since $pL \subset M$, we may assume without loss of generality that $M=\z(x_1+b_2x_2+b_3x_3)+\z(px_2)+\z(px_3)$. 
First assume that $\epsilon \ne0$. Then we may further assume that $\left(\frac {Q(x_1+b_2x_2+b_3x_3)}p\right)=\epsilon$. 
Since  $Q(x_1+b_2x_2+b_3x_3) \in \z_p^{\times}$, 
$$
M_p \simeq \langle Q(x_1+b_2x_2+b_3x_3) \rangle \perp m_p
$$ 
for some binary  sublattice $m_p$ of $M_p$ whose scale is $p^2\z_p$. 
The assertion follows from this. Assume that $\epsilon=0$ and $L_p$ is unimodular. In this case we may assume that 
 $Q(x_1+b_2x_2+b_3x_3) \in p\z_p$. Then $B(x_1+b_2x_2+b_3x_3,x_2)$ or  $B(x_1+b_2x_2+b_3x_3,x_3)$ is a unit in $\z_p$, for 
$L_p$ is unimodular. The assertion follows from this. 

Finally assume that $L_p$ is not unimodular and $\epsilon=0$. In this case we may assume that the ordered basis $\mathfrak B=\{x_1,x_2,x_3\}$
satisfies every condition in Lemma \ref{1}. Then by a direct computation we know $L_{(0,0,1)} \in \Omega_p(0,L)$ satisfies the first local property and the others satisfy the second local property.  
\end{proof}

\begin{lem} \label{local-2} Under the same assumptions given above, assume that $p=2$. Let $M$ be a $\z$-lattice in $\Omega_2(\epsilon,L)$. If $-4dL_2=\delta \in \z_2^{\times}$, then 
$$
M_2 \simeq \begin{cases}  \begin{pmatrix} 0&1\\1&0\end{pmatrix} \perp \langle 4\delta\rangle \qquad &\text{ if $\epsilon=0$},\\
\langle 1,-1,4\delta\rangle \quad \text{or} \quad  \begin{pmatrix} 0&2\\2&0\end{pmatrix} \perp \langle \delta\rangle  \qquad  &\text{ otherwise}, \end{cases}
$$
and $M_q \simeq L_q$ for any prime $q \ne 2$.
 If $-4dL_2=\delta \in 2\z_2$, then 
$$
M_2 \simeq \begin{cases}  \begin{pmatrix} 0&1\\1&0\end{pmatrix} \perp \langle 4\delta\rangle 
\quad \text{or} \quad  \begin{pmatrix} 0&2\\2&0\end{pmatrix} \perp \langle \delta\rangle
\qquad &\text{ if $\epsilon=0$},\\
\langle 1,-1,4\delta\rangle   \qquad  &\text{otherwise}, \end{cases}
$$       
and $M_q \simeq L_q$ for any prime $q \ne 2$.

 \end{lem}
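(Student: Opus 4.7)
The plan is to follow the strategy of Lemma \ref{local}, adapted to the extra subtleties of the prime $2$. By the Weak Approximation Theorem, I first fix an ordered $\z$-basis $\mathfrak B = \{x_1, x_2, x_3\}$ of $L$ such that $(B(x_i,x_j))$ matches one of the two standard $2$-adic models listed just before Definition \ref{defn1}, with the one-dimensional summand of the appropriate $2$-adic order according as $-4dL_2$ is a unit or not. Since every $M \in \Omega_2(\epsilon,L)$ satisfies $2L \subset M$ with $[L:M]=4$, after possibly permuting the basis one can write
$$
M = \z(x_1 + b_2 x_2 + b_3 x_3) + \z(2x_2) + \z(2x_3)
$$
for some $b_2, b_3 \in \z$. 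The proof then reduces to computing the Gram matrix of $M_2$ in this basis and identifying its isometry class.

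For $\epsilon = *$, the generator $y := x_1 + b_2 x_2 + b_3 x_3$ has $Q(y) \in \z_2^{\times}$, so $\langle Q(y)\rangle$ splits off $M_2$ as a unimodular $\z_2$-summand. The orthogonal complement is a rank-two $\z_2$-lattice whose discriminant equals $dL_2$ up to a unit square, and the $2$-adic classification of binary lattices then yields exactly the alternatives stated. For $\epsilon = 0$, one has $Q(y) \in 2\z_2$, and the assumption that the $\frac12 \z_2$-modular component of $L_2$ is non-zero isotropic forces $B(y, x_2)$ or $B(y, x_3)$ to be a $\z_2$-unit; splitting off the corresponding hyperbolic plane (at scale $\z_2$ or $2\z_2$) produces a Jordan splitting with a binary hyperbolic component and a one-dimensional orthogonal summand whose isometry type is pinned down by $dL_2$.

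The main obstacle, as usual at $p=2$, is the non-uniqueness of Jordan decomposition: discriminant and scale alone do not determine the isometry class of a binary $\z_2$-lattice. I expect the technical core of the proof to be a short case analysis on $(b_2, b_3) \pmod 2$ within each of the four combinations of $\epsilon \in \{0,*\}$ and $-4dL_2 \in \z_2^{\times}$ versus $2\z_2$, verifying through the standard $2$-adic invariants (discriminant, scale, norm generator, weight) both that every listed form is realized for some choice of $(b_2, b_3)$ and that no extraneous genus appears. Finally, $M_q \simeq L_q$ for $q \ne 2$ is immediate, since $2$ is a unit in $\z_q$ and hence $M \otimes \z_q = L \otimes \z_q$.
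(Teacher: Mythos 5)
The paper's own ``proof'' of this lemma is a one-line deferral to the proof of Lemma \ref{local}, and your proposal is exactly that computation, correctly set up: write $M=\z(x_1+b_2x_2+b_3x_3)+\z(2x_2)+\z(2x_3)$ and analyze the Gram matrix case by case. So the approach matches, and the outline is sound. Two points in your $\epsilon=0$ paragraph need repair before the case analysis will close, though. First, since the scale of $L_2$ is $\frac12\z_2$, the quantity that can be a unit is $2B(y,x_j)=B(y,2x_j)$, not $B(y,x_j)$; and the claim that isotropy \emph{forces} such a unit pairing is false precisely in the subcase $\delta=-4dL_2\in 2\z_2$ with $y$ congruent mod $2L$ to the vector spanning the $\langle\delta\rangle$-component --- there all of $B(y,2x_2),B(y,2x_3)$ vanish mod $2$, and it is exactly this subcase that produces the alternative $\left(\begin{smallmatrix}0&2\\2&0\end{smallmatrix}\right)\perp\langle\delta\rangle$ appearing in the statement, so your argument must not exclude it. Second, when you do split off a unimodular even binary block $\left(\begin{smallmatrix}Q(y)&B\\B&Q(2x_j)\end{smallmatrix}\right)$ with $B$ a unit, you still have to rule out the anisotropic plane $\left(\begin{smallmatrix}2&1\\1&2\end{smallmatrix}\right)$ in favor of $\left(\begin{smallmatrix}0&1\\1&0\end{smallmatrix}\right)$; this follows because one can always take the two diagonal entries to satisfy $Q(y)Q(2x_j)\equiv 0\pmod 8$ (indeed $Q(2x_j)=4Q(x_j)\equiv 0 \pmod 8$ whenever $Q(x_j)$ is even, and one of $x_2,x_3$ can be chosen isotropic mod $2$), so the determinant is $\equiv -1$ modulo unit squares. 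With these fixes the remaining verification is the routine check you describe.
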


\begin{proof}
The proof is quite similar to the above. 
\end{proof}

\begin{lem} \label{2} Assume that a ternary $\z$-lattice  $L$ and a prime $p$ satisfies the condition (2.2).  
 For any positive integer $n$ such that $\left( \frac np\right)=\epsilon$,
$$
r(n,L)=\sum_{M \in \Omega_p(\epsilon,L)} r(n,M)-(\vert s_p(\epsilon,L)\vert-1)r(n,pL).
$$
This equality also holds for $p=2$ if either $\epsilon = 0$ and $n$ is even or $\epsilon = *$ and $n$ is odd.

\end{lem}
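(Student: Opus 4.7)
The plan is to establish the identity by a direct double-counting argument that partitions $R(n,L)$ according to whether a representation vector lies in $pL$ or not, and then compares this with the sum over the sublattices $L_{\mathbf s}\in\Omega_p(\epsilon,L)$.

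First I would observe that $\phi_{\mathfrak B}(x)$ is well-defined exactly when $x\in L-pL$, and that for $x\in R(n,L)-pL$ the hypothesis $\left(\frac{n}{p}\right)=\epsilon$ (or the analogous parity condition when $p=2$) forces $x\in S_p(\epsilon,L)-pL$, so $\phi_{\mathfrak B}(x)\in s_p(\epsilon,L)$. Moreover, any two $y_1,y_2\in L-pL$ with $\phi_{\mathfrak B}(y_1)=\phi_{\mathfrak B}(y_2)$ differ by an element of $pL$, so $Q(y_1)\equiv Q(y_2)\pmod{p}$; consequently the sets $s_p(\epsilon,L)$ for distinct $\epsilon$ are pairwise disjoint. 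Thus every $x\in R(n,L)-pL$ lies in a unique $L_{\mathbf s}$, namely the one with $\mathbf s=\phi_{\mathfrak B}(x)\in s_p(\epsilon,L)$.

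Next I would fix $\mathbf s\in s_p(\epsilon,L)$ and decompose
\[
r(n,L_{\mathbf s})=r(n,pL)+\bigl|\{x\in R(n,L)-pL:\phi_{\mathfrak B}(x)=\mathbf s\}\bigr|,
\]
using the defining identity $L_{\mathbf s}=\phi_{\mathfrak B}^{-1}(\mathbf s)\cup pL$. Summing over $\mathbf s\in s_p(\epsilon,L)$ and invoking the uniqueness established in the previous step, the second term adds up to $|R(n,L)-pL|=r(n,L)-r(n,pL)$, giving
\[
\sum_{M\in\Omega_p(\epsilon,L)}r(n,M)=|s_p(\epsilon,L)|\cdot r(n,pL)+r(n,L)-r(n,pL),
\]
which rearranges to the desired equality. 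The $p=2$ assertion follows from the same argument, since the hypothesis on the parity of $n$ is exactly what is needed to guarantee $\phi_{\mathfrak B}(x)\in s_2(\epsilon,L)$ for every $x\in R(n,L)-2L$.

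There is essentially no obstacle here: the only thing one must be slightly careful with is confirming that membership in a single class $\mathbf s\in s_p(\epsilon,L)$ is unambiguous for vectors outside $pL$ (handled by the $Q(y_1)\equiv Q(y_2)\pmod{p}$ computation above), which also justifies that the different $\Omega_p(\epsilon,L)$ do not overlap in a way that would affect the count. Apart from that, the argument is a straightforward inclusion–exclusion encoded in the fact that $pL$ is contained in every $L_{\mathbf s}$ and therefore is overcounted precisely $|s_p(\epsilon,L)|-1$ times.
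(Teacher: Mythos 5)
Your argument is correct and is essentially the paper's own proof: the authors simply record the two facts that $\{x\in S_p(\epsilon,L)-pL : Q(x)=n,\ \phi(x)=\mathbf s\}=\{x\in L_{\mathbf s} : Q(x)=n\}-R(n,pL)$ and that $L_{\mathbf s}\cap L_{\mathbf t}=pL$ for $\mathbf s\ne\mathbf t$, and your summation over $\mathbf s\in s_p(\epsilon,L)$ together with the observation that every $x\in R(n,L)-pL$ lands in $S_p(\epsilon,L)$ is exactly the computation they leave implicit. One small imprecision worth noting: since $\phi_{\mathfrak B}$ takes values in $\mathbb P^2$, two vectors with $\phi_{\mathfrak B}(y_1)=\phi_{\mathfrak B}(y_2)$ satisfy $y_1\equiv cy_2\pmod{pL}$ for some unit $c$ rather than $y_1-y_2\in pL$, so the correct conclusion is $Q(y_1)\equiv c^2Q(y_2)\pmod p$; this still preserves the Legendre symbol (and the class $0$), and in any case that disjointness remark is not actually needed for your count.
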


\begin{proof} 
The lemma follows from the facts that
$$
\{ x \in S_p(\epsilon,L)-pL \mid Q(x)=n, \ \ \phi(x)=s \}=\{ x \in L_s \mid Q(x)=n\}-R(n,pL),
$$  
and 
$$
L_s \cap L_t=pL \qquad \text{if and only if} \qquad s\ne t,
$$ 
for any $s,t \in \p^2$.
\end{proof}

Under the same assumptions given above, one may easily show that $dM=p^4dL$ for any $M \in \Omega_p(\epsilon,L)$. Furthermore
$L/M \simeq \z/p\z\oplus \z/p\z$. 

\begin{rmk} {\rm If  a $\frac 12 \z_p$-modular component of $L_p$ is zero or anisotropic, the above lemma implies the equation (2.1). So we may consider the above lemma as a natural generalization of Watson's transformation.}
\end{rmk}

Let $L$ and $\ell$ be  ternary $\z$-lattices such that $d\ell=p^4dL$. We define
 $$
\tilde{R}(\ell,L)=\{ \sigma : \ell \to L \mid L/\sigma(\ell) \simeq \z/p\z\oplus \z/p\z\} \ \  \text{and}  \ \ \tilde{r}(\ell,L)=\vert \tilde{R}(\ell,L)\vert.
$$
One may easily show that $\vert\{ M \in \Omega_p(\epsilon,L) \mid M \simeq \ell \} \vert=\tilde{r}(\ell, L)/o(\ell)$ for any $\epsilon \in \{0,\pm1\}$ or $\epsilon \in \{0,*\}$.

\begin{lem} \label{tilde r} For any ternary $\z$-lattices $\ell$ and $L$ such that $d\ell=p^4dL$, we have
$$
\tilde{r}(\ell,L)=r(p\ell^{\#},L^{\#})=r(pL,\ell). 
$$
\end{lem}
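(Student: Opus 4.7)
The plan is to prove both equalities by constructing explicit bijections between the embeddings counted by each side.

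For $\tilde{r}(\ell,L)=r(pL,\ell)$, the key observation is that the cotype condition $L/\sigma(\ell)\simeq\z/p\z\oplus\z/p\z$ is equivalent to the inclusion $pL\subset\sigma(\ell)$ (given the index $[L:\sigma(\ell)]=p^2$, which is forced by $d\ell=p^4dL$). Hence each $\sigma\in\tilde{R}(\ell,L)$ has a well-defined adjoint $\tau:=\sigma^{-1}|_{pL}\colon pL\to\ell$, which is automatically isometric. For the reverse direction, given $\tau\in R(pL,\ell)$ the discriminant count $[\ell:\tau(pL)]^2=d(pL)/d\ell=p^6 dL/p^4 dL=p^2$ yields $p\ell\subset\tau(pL)$; extending $\tau$ to a $\q$-linear isometry $\tau_{\q}$ of the ambient quadratic spaces and setting $\sigma:=\tau_{\q}^{-1}|_{\ell}$, the containment $p\ell\subset\tau(pL)$ translates exactly to $\sigma(\ell)\subset L$, and the resulting index-$p^2$ quotient $L/\sigma(\ell)$, being a quotient of $L/pL$ and hence of exponent $p$, must be $(\z/p\z)^2$. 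The two constructions are mutually inverse.

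For $\tilde{r}(\ell,L)=r(p\ell^{\#},L^{\#})$ I use duality at the level of sublattices: the operation $M\mapsto pM^{\#}$ sets up a bijection between the sublattices $M\subset L$ with $M\simeq\ell$ and $L/M\simeq(\z/p\z)^2$, and the sublattices $N\subset L^{\#}$ with $N\simeq p\ell^{\#}$. Indeed $pL\subset M$ dualizes to $M^{\#}\subset p^{-1}L^{\#}$, so $pM^{\#}\subset L^{\#}$ with $[L^{\#}:pM^{\#}]=p$, and $pM^{\#}\simeq p\ell^{\#}$. Conversely, any $N\subset L^{\#}$ with $N\simeq p\ell^{\#}$ has $[L^{\#}:N]=p$ by a discriminant check, so $N^{\#}\supset L$ with $[N^{\#}:L]=p$; then $pN^{\#}\subset L$ has cokernel of order $p^2$ and exponent $p$, hence elementary abelian, and $pN^{\#}\simeq\ell$. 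The sublattice bijection is promoted to the identity $\tilde{r}(\ell,L)=r(p\ell^{\#},L^{\#})$ by multiplying each side by the common factor $o(\ell)=o(\ell^{\#})=o(p\ell^{\#})$ counting isometries $\ell\to M$ and $p\ell^{\#}\to N$ respectively.

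The main subtlety, rather than an obstacle, is the asymmetry of cotypes across the second equality: $L/M$ is elementary abelian of order $p^2$, while $L^{\#}/pM^{\#}$ is cyclic of order $p$. This is handled throughout by the principle that an order-$p^2$ quotient of $L/pL$ must be $(\z/p\z)^2$, together with careful tracking of indices via $d(pL)=p^6\,dL$ and $d\ell^{\#}=1/d\ell$.
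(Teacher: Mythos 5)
Your proof is correct and is essentially the paper's argument in coordinate-free form: the paper writes the same "invert and scale by $p$" construction as the matrix bijection $T\mapsto (pT^{-1})^t$ (resp.\ $T\mapsto pT^{-1}$), using $d\ell=p^4dL$ to get $\det = \pm p$ and hence integrality, exactly as your discriminant/index bookkeeping does for $\sigma\mapsto\sigma^{-1}|_{pL}$ and $M\mapsto pM^{\#}$. The only cosmetic difference is that you pass through sublattices and reinsert the factor $o(\ell)=o(p\ell^{\#})$, while the paper works directly with representation sets; both are fine.
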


\begin{proof} Assume that $T \in \tilde{R}(\ell,L)$. Then $T^tM_LT=M_{\ell}$ and $pT^{-1}$ is an integral
matrix. Since 
$$
(pT^{-1})M_L^{-1}(pT^{-1})^t=p^2M_{\ell}^{-1},
$$ 
$(pT^{-1})^t \in R(p\ell^{\#},L^{\#})$. Conversely if $S^tM_L^{-1}S=p^2M_{\ell}^{-1}$, then $d(S)=\pm p$. 
Hence $pS^{-1}$ is an integral matrix and $(pS^{-1})^t \in \tilde{R}(\ell,L)$. This completes the proof.
\end{proof}

Assume that a ternary $\z$-lattice $L$ and a prime $p$ satisfies the condition (2.2). In the remaining of this section, 
we additionally assume that $\ord_p(4\cdot dL) \ge 2$.    
Let $K=\lambda_p(L)$ and let
$$
\gen_p^K(L)=\{ L' \in \gen(L) : \lambda_p(L') \simeq K\}.
$$
For any integer $n$, we also define 
$$
r(n, \gen_p^K(L)) = \sum_{\substack{[L']\in\gen(L) \\ \lambda_p(L') \simeq K}} \frac{r(n,L')}{o(L')}.
$$ 
 In fact, every $\z$-lattice in $\gen_p^K(L)$ is isometric to one of $\z$-lattices in
$$
\Gamma_p^L(\Lambda_p(L))=\{ M \subset K \mid M \in \gen(L)\}.
$$ 
Furthermore, the isometry group $O(K)$ acts on $\Gamma_p^L(\Lambda_p(L))$. Each orbit under this action consists of all isometric lattices in $\Gamma_p^L(\Lambda_p(L))$, and hence there are exactly $\frac {o(K)}{o(L)}$ lattices  that are isometric to $L$ in $\Gamma_p^L(\Lambda_p(L))$.  There are exactly $p^2+p+1$ sublattices
of $K$ with index $p$. They are, in fact,   
$$ 
K_0=\z (px_1)+\z x_2+\z x_3, \ \  \ K_{1,u}=\z(x_1+ux_2)+\z(px_2)+\z x_3 \ (0 \le u \le p-1) 
$$
and
$$
K_{2,\alpha,\beta}=\z(x_1+\alpha x_3)+\z(x_2+\beta x_3)+\z(px_3) \ (0 \le \alpha,\beta \le p-1).
$$ 
Among these sublattices of $K$, there are exactly $\frac {p(p+1)}2$ lattices ($p^2$ lattices) that are contained in the genus of $L$ if 
$\ord_p(4\cdot dL)=2$ ($\ord_p(4\cdot dL) \ge 3$, respectively) (for details, see \cite {co}).

\begin{prop} \label{genTL-}  Assume that $\z$-lattices $L$ and $K$ and a prime $p$ satisfies the above condition.  Then for any integer $n$ not divisible by $p$,  we have 
$$
r(n,\gen_p^K(L))=\begin{cases} \displaystyle \frac{p-\left(\frac{-ndK}{p}\right)}2\frac{r(n,K)}{o(K)} \quad &\text{if 
 $p\ne 2$ and $\ord_p(4\cdot dL)=2$}, \\
 \displaystyle \frac {r(n,K)-r(n,\Lambda_1(K))}{o(K)}  \quad &\text{if 
 $p=2$ and $\ord_p(4\cdot dL)=2$}, \\
\displaystyle p\frac{r(n,K)}{o(K)} \quad &\text{if $\ord_p(4\cdot dL)\ge 3$}, \\ \end{cases}
$$
where $\Lambda_1(K)=\{ x \in K : B(x,K) \subset \z\}$ is a sublattice of $K$. 
\end{prop}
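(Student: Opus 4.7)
The plan is to rewrite $r(n,\gen_p^K(L))$ as an unweighted count over $\Gamma_p^L(\Lambda_p(L))$ and then, by interchanging summation, count for each vector $v \in K$ with $Q(v)=n$ the number of sublattices that contain it. Since the $O(K)$-orbit through a sublattice of $K$ isometric to $L'$ has $o(K)/o(L')$ elements,
\[
r(n,\gen_p^K(L)) \;=\; \frac{1}{o(K)} \sum_{M \in \Gamma_p^L(\Lambda_p(L))} r(n,M) \;=\; \frac{1}{o(K)} \sum_{\substack{v \in K \\ Q(v)=n}} c(\bar v),
\]
where $c(\bar v) := \#\{M \in \Gamma_p^L(\Lambda_p(L)) : v \in M\}$ depends only on the image $\bar v$ of $v$ in $K/pK \simeq \f_p^3$, and $p\nmid n$ forces $\bar v \ne 0$.

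The crux is a local classification: each $M \subset K$ of index $p$ corresponds to a $2$-dimensional subspace $W = M/pK \subset K/pK$, and one must decide which $W$ yield $M \in \gen(L)$. Fix a $\z_p$-basis $\{e_1,e_2,e_3\}$ of $K_p$ adapted to its Jordan splitting. If $\ord_p(4\cdot dL)\ge 3$, then $K_p$ has a proper $p$-modular summand $\langle \epsilon p^{\ord_p(4\cdot dL)-2}\rangle$ generated by $e_3$, and a direct Jordan-type argument shows $M_W \in \gen(L)$ iff $\bar e_3 \notin W$. Since $Q(\bar e_3) \equiv 0 \pmod p$ while $Q(v)=n\not\equiv 0 \pmod p$ forces $\bar v \notin \langle \bar e_3\rangle$, exactly $p$ of the $p+1$ two-dimensional subspaces through $\bar v$ avoid $\bar e_3$; hence $c(\bar v)=p$.

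If $p$ is odd and $\ord_p(4\cdot dL)=2$, then $K_p$ is unimodular. Lifting $W^\perp = \langle\bar u\rangle\subset K_p/pK_p$ to $u\in K_p$ and using the orthogonal splitting $K_p = \langle u\rangle \perp u^\perp$ (valid when $\bar u$ is anisotropic), one computes $M_W \simeq u^\perp \perp \langle Q(u)p^2\rangle$; Witt cancellation over $\z_p$ gives $M_W \simeq L_p$ exactly when $Q(u)$ lies in the square class of $\epsilon$, while isotropic $\bar u$ produces a different Jordan type. Hence $c(\bar v)$ equals the number of anisotropic $1$-dimensional subspaces of $\bar v^\perp$ whose $Q$-value lies in the class of $\epsilon$. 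The restricted binary form $Q|_{\bar v^\perp}$ has discriminant $\equiv dK/n$ modulo squares, so it is hyperbolic iff $\left(\frac{-n\cdot dK}{p}\right)=1$; a short enumeration (a hyperbolic plane has $p-1$ anisotropic lines split evenly between the two nontrivial square classes, while an anisotropic plane has $p+1$ lines similarly split) yields $c(\bar v) = (p-\left(\frac{-n\cdot dK}{p}\right))/2$.

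Finally, for $p=2$ and $\ord_2(4\cdot dL)=2$, a parallel analysis is carried out in characteristic $2$. The polar form $\beta(x,y):=Q(x+y)-Q(x)-Q(y)$ on $K/2K$ is alternating, and one checks that its radical equals $\Lambda_1(K)/2K$. A case analysis identifies $M_W\simeq L_2$ with those $W$ for which $Q|_W$ is the hyperbolic plane $xy$ --- equivalently, $W$ is spanned by two isotropic vectors. A short $\f_2$-computation then shows that for every anisotropic $\bar v$ (the only case relevant since $n$ odd forces $Q(\bar v)\equiv 1$), there is exactly one such $W$ containing $\bar v$ when $\bar v$ lies outside the radical, and none otherwise. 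Summing gives $\sum_v c(\bar v)=r(n,K)-r(n,\Lambda_1(K))$. The main obstacle throughout is the $\z_p$-isometry classification of the $M_W$'s: at odd $p$ this is clean via Witt cancellation, but at $p=2$ Witt cancellation may fail, and the correction term $r(n,\Lambda_1(K))$ arises precisely from the nontrivial radical of $\beta$.
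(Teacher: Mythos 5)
Your proposal is correct and follows essentially the same route as the paper: both rewrite $r(n,\gen_p^K(L))$ as an unweighted sum over the index-$p$ sublattices of $K$ lying in $\gen(L)$ and then count, for each vector of length $n$, how many of those sublattices contain it, the count depending only on the image of the vector in $K/pK$. Your invariant evaluation of that count (via the correspondence $W \leftrightarrow W^{\perp}$ and square-class bookkeeping) is just a coordinate-free version of the paper's explicit enumeration of the sublattices $K_0$, $K_{1,u}$, $K_{2,\alpha,\beta}$ with respect to a basis of Gram matrix $\equiv \diag(n,n,t)$, and you additionally write out the $p=2$ and $\ord_p(4\cdot dL)\ge 3$ cases that the paper leaves to the reader.
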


\begin{proof} Since proofs are quite similar to each other, we only provide the proof of the first case. Assume that $Q(x_1)=n$ for some $x_1 \in K$. We will count the number of lattices  containing the vector $x_1$ in $\Gamma_p^L(\Lambda_p(L))$. Note that for any vector $y \in K$ and any integer $d$ not divisible by $p$, $dy \in M$ if and only if $y \in M$ for any $M \in \Gamma_p^L(\Lambda_p(L))$. Hence we may assume that $x_1$ is a primitive vector in $K$.  
Then there is a basis $\{x_1,x_2,x_3\}$ of $K$ such that for some integer $t$ not divisible by $p$,
$$
(B(x_i,x_j))\equiv \diag(n,n,t) \pmod{p}.
$$
Among all sublattices of $K$ with index $p$ that are contained in the genus of $L$, those $\z$-lattices containing $x_1$ are 
 $K_{2,0,\beta}$, for any $\beta$ satisfying $\left(\frac {-n^2-n\beta^2 dK}{p}\right)=1$, and  
 $K_{1,0}$ only when $\left(\frac {-n dK}p\right)=1$. 
Therefore one may easily show that the total number of such lattices  is  
$\frac {p-\left(\frac{-ndK}{p}\right)}2$.  The proposition follows from
$$
\sum_{M \in \Gamma_p^L(\lambda_p(L))}r(n,M)=\sum_{[M] \in \gen_p^K(L)} \frac{o(K)}{o(M)} r(n,M)= \frac {p-\left(\frac{-ndK}{p}\right)}2 r(n,K).
$$
This completes the proof.  
\end{proof} 

\begin{prop} \label{genTL} Under the same assumption given above,
if $n$ is divisible by $p$, then we have
$$
r(n,\gen_p^K(L)) =\begin{cases} p \displaystyle\frac{r(n,K)}{o(K)} + \frac{p(p-1)}{2} \frac{r\left(\frac{n}{p^2} , K\right)}{o(K)} & \text{if $\ord_p(4\cdot dL)=2$}, \\
\displaystyle p \frac{r(n,K)}{o(K)} + p^2 \frac{r\left( \frac{n}{p^2}, K\right)}{o(K)} - p \frac{r(n,\Lambda_p(K))}{o(K)} & \text{otherwise}. \end{cases}
$$
\end{prop}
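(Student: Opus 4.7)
The plan is to follow the double-counting strategy of Proposition~\ref{genTL-}, but now carefully split $R(n,M)$ into vectors in $pK$ and primitive vectors, since the condition $p\mid n$ admits both kinds. First I would rewrite
$$r(n,\gen_p^K(L)) = \frac{1}{o(K)}\sum_{M\in\Gamma_p^L(\Lambda_p(L))} r(n,M)$$
using the $O(K)$-orbit description recalled before Proposition~\ref{genTL-}, and then count pairs $(x,M)$ with $x\in R(n,M)$ via the reduction $x\mapsto \bar x\in K/pK$. Because every $M\in\Gamma_p^L(\Lambda_p(L))$ contains $pK$ with $[K:M]=p$, vectors of $pK$ lie in every such $M$ and contribute $|\Gamma_p^L(\Lambda_p(L))|\cdot r(n/p^2,K)$ (with $r(n/p^2,K)=0$ when $p^2\nmid n$), while primitive vectors contribute $\sum_x N(x)$, where $N(x):=|\{M\in\Gamma_p^L(\Lambda_p(L)):x\in M\}|$ and $x$ runs over primitive vectors satisfying $Q(x)=n$.

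The core step is evaluating $N(x)$. Since $N(x)$ depends only on $\bar x$, and index-$p$ sublattices of $K$ containing $x$ correspond to hyperplanes of $K/pK$ through $\bar x$, $N(x)$ equals the number of such hyperplanes that define an $M$ in $\gen(L)$. Using $K_p\simeq \begin{pmatrix}0&\frac12\\ \frac12&0\end{pmatrix}\perp\langle\epsilon p^{m-2}\rangle$ with $m=\ord_p(4\cdot dL)$, I would identify the allowed hyperplanes as follows. When $\ord_p(4\cdot dL)=2$, $K_p$ is unimodular and the $\frac{p(p+1)}{2}$ allowed hyperplanes are exactly the hyperbolic $2$-planes of the reduced quadratic form on $K/pK$; when $\ord_p(4\cdot dL)\geq 3$, the reduced form is degenerate with radical $\langle \bar e_3\rangle$ (where $e_3$ spans the $\frac12\z_p$-modular component) and the $p^2$ allowed hyperplanes are exactly those \emph{not} containing $\bar e_3$. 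Both descriptions follow by examining the Jordan decomposition of the preimage $\pi^{-1}(H)\subset K_p$ of a hyperplane $H\subset K/pK$, along the lines of Lemma~\ref{local}.

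Given these two structural descriptions, computing $N(x)$ is an elementary $\mathbb F_p$-linear algebra exercise. For $\ord_p(4\cdot dL)=2$: among the $p+1$ hyperplanes through an isotropic $\bar x$, the unique one equal to $\bar x^{\perp}$ is degenerate and the other $p$ contain the isotropic vector $\bar x$ and are non-degenerate of rank $2$, hence hyperbolic, so $N(x)=p$. For $\ord_p(4\cdot dL)\ge 3$: if $x\in\Lambda_p(K)$, then $\bar x$ lies in the radical direction and every hyperplane through $\bar x$ also contains $\bar e_3$, giving $N(x)=0$; otherwise exactly one of the $p+1$ hyperplanes through $\bar x$ also contains $\bar e_3$, giving $N(x)=p$. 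Since $pK\subset\Lambda_p(K)$, the difference $r(n,K)-r(n,\Lambda_p(K))$ is exactly the number of primitive vectors of $K\setminus\Lambda_p(K)$ of norm $n$.

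Assembling these counts gives
$$\sum_{M}r(n,M) = \frac{p(p+1)}{2}\,r(n/p^2, K) + p\bigl(r(n, K)-r(n/p^2, K)\bigr) = p\,r(n, K) + \frac{p(p-1)}{2}\,r(n/p^2, K)$$
in the first case, and
$$\sum_{M}r(n,M) = p^2\,r(n/p^2, K) + p\bigl(r(n, K)-r(n, \Lambda_p(K))\bigr)$$
in the second; dividing by $o(K)$ produces the stated formulas. The main obstacle is the local-structural identification of which hyperplanes of $K/pK$ correspond to lattices in $\gen(L)$; once that is in hand by a direct Jordan-decomposition argument as indicated, everything else is routine combinatorics.
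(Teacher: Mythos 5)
Your proposal is correct and follows essentially the same route as the paper: both count incidences between norm-$n$ vectors and the index-$p$ sublattices in $\Gamma_p^L(\Lambda_p(L))$, split vectors by primitivity at $p$, and show each $p$-primitive vector lies in exactly $p$ of the admissible sublattices except (in the case $\ord_p(4\cdot dL)\ge 3$) those in $\Lambda_p(K)$, which lie in none. Your reformulation via hyperbolic versus degenerate hyperplanes of $K/pK$ is just a coordinate-free rephrasing of the paper's explicit enumeration of the sublattices $K_{2,0,\beta}$ in an adapted basis.
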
  

\begin{proof}  First we define
$$
R^{*}(n,K)= \{ x \in K \mid Q(x)=n, \text{ $x$ is  primitive as a vector in $K_p$}\}, 
$$
$r^{*}(n,K)=\vert R^{*}(n,K)\vert$, and $r^{\Diamond}(n,K)=r(n,K)-r^{*}(n,K)$.
Let $x_1 \in K$ be a vector such that $Q(x_1)=n$. We will compute the number of lattices containing $x_1$ in $\Gamma_p^L(\Lambda_p(L))$. By the similar reasoning to the above, we may assume that there is a primitive vector $\widetilde{x_1} \in K$ and a nonnegative integer $k$ such that $x_1=p^k\widetilde{x_1}$. 
 If $k>0$, then $x_1$ is contained in all lattices in   $\Gamma_p^L(\Lambda_p(L))$ .

Assume that $k=0$. If  $\ord_p(4\cdot dL)=2$,
then there is a basis $\{x_1,x_2,x_3\}$ of $K$ such that 
$$
(B(x_i,x_j))\equiv \begin{pmatrix} 0&b&0\\b&0&0\\0&0&e\end{pmatrix} \pmod p,
$$
where $2b$ and $e$ are integers not divisible by $p$. Among all sublattices of $K$ with index $p$ that are contained in the genus of $L$, those $\z$-lattices containing $x_1$ are  $K_{2,0,\beta}$ for any $\beta$. Therefore if  $\ord_p(4\cdot dL)=2$,  we have
$$
\begin{array} {lll}
\displaystyle\sum_{[M] \in \gen_p^K(L)} \frac{o(K)}{o(M)} r(n,M)&=p\cdot r^{*}(n,K)+
\displaystyle\frac{p(p+1)}2 r^{\diamond}(n,K)\\
&=p\cdot r(n,K)+\displaystyle\frac{p(p-1)}2 r\left(\frac n{p^2},K\right).\end{array}
$$
Suppose that   $\ord_p(4\cdot dL)\ge3$. If there is a vector $y\in K$ such that $2B(x_1,y)\not\equiv 0\pmod p$, then there are exactly $p$ lattices in $\Gamma_p^L(\Lambda_p(L))$  containing $x_1$. However if $2B(x_1,K)\subset p\z$, then there does not exist a lattice  in $\Gamma_p^L(\Lambda_p(L))$  that contains $x_1$. Note that
$$
\vert\{ x \in R^{*}(n,K) \mid 2B(x,K) \subset p\z\}\vert=r(n,\Lambda_p(K))-r^{\Diamond}(n,K).
$$
Therefore we have
$$
\sum_{[M] \in \gen_p^K(L)} \frac{o(K)}{o(M)} r(n,M)=p(r(n,K)-r(n,\Lambda_p(K)))+p^2\cdot r^{\Diamond}(n,K).\\
$$
This completes the proof. \end{proof}

\section{Finite (multi-) graphs and ternary quadratic forms}

Let $V$ be a (positive definite) ternary quadratic space and
let $L$ be a (non-classic) ternary $\z$-lattice on $V$. Let $p$ be a prime such that  $L_p \simeq \begin{pmatrix} 0&\frac12\\ \frac12&0\end{pmatrix} \perp \langle \epsilon \rangle$, where $\epsilon \in \z_p^{\times}$. For any nonnegative integer $m$, let $\mathcal G_{L,p}(m)$ be a genus on $W$ such that each $\z$-lattice $T \in \mathcal G_{L,p}(m)$ satisfies
$$
 T_p \simeq  \begin{pmatrix} 0&\frac12\\ \frac12&0\end{pmatrix} \perp \langle \epsilon p^m \rangle \quad \text{and} \quad T_q \simeq (L^{p^m})_q \ \text{ for any $q \ne p$}.
$$
Here $W=V$ if $m$ is even, $W=V^p$ otherwise.  

\begin{lem} \label{2-p} Let $T \in \mathcal G_{L,p}(m)$ and  $S \in \mathcal G_{L,p}(m+1)$  be ternary $\z$-lattices. Then we have 
$$
\sum_{[N] \in \mathcal G_{L,p}(m+1)} \frac{\tilde{r}(N^p,T)}{o(N)} = \begin{cases} p+1 &\text{ \ \ if $m=0$,} \\ 2p &\text{ \ \ otherwise}  \end{cases} \quad \text{and} \quad \sum_{[M] \in \mathcal G_{L,p}(m)} \frac{r(M^p,S)}{o(M)} = 2.
$$
\end{lem}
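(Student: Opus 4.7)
The plan is to reinterpret both sums as counts of sublattices of the fixed lattice with a prescribed $p$-adic Jordan decomposition, and to read off those counts from the local analysis of Section 2.

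For the first identity, since $o(N^p)=o(N)$, the relation $\#\{M \in \Omega_p(\epsilon',T) : M \simeq \ell\} = \tilde r(\ell,T)/o(\ell)$ from the remark after Lemma \ref{tilde r} rewrites the left side as the number of sublattices $M \subset T$ with $T/M \simeq (\z/p\z)^2$ and $M \simeq N^p$ for some $N \in \mathcal G_{L,p}(m+1)$, equivalently with $M_p \simeq \begin{pmatrix} 0 & p/2 \\ p/2 & 0 \end{pmatrix} \perp \langle \epsilon p^{m+2} \rangle$. Lemmas \ref{local} and \ref{local-2} show that the $\epsilon' \ne 0$ (or $*$) branches of $\Omega_p$ always contain a rank-one unimodular Jordan block and so cannot match. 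When $m=0$, $T_p$ is unimodular and every $M \in \Omega_p(0,T)$ has the required local type $\langle p,-p,-p^2 dT\rangle$; Lemma \ref{1} then gives $|\Omega_p(0,T)| = p+1$. When $m \geq 1$, $|\Omega_p(0,T)| = 2p+1$, and Lemma \ref{local} (or Lemma \ref{local-2} when $p=2$) singles out exactly one element --- the lattice $L_{(0,0,1)}$ in the adapted basis --- as being of the wrong type $\langle p^2,-p^2,-dT\rangle$, leaving $2p$ of the desired type.

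For the second identity, $r(M^p,S)/o(M)$ equals the number of sublattices of $S$ isometric to $M^p$. Because $d(M^p)=p^2 \cdot dS$, each such sublattice has index $p$ in $S$; the condition $(M^p)_q \simeq S_q$ is automatic at $q \ne p$, so only the $p$-adic Jordan structure must be controlled. Fix a basis $\{y_1,y_2,y_3\}$ realising $S_p \simeq \begin{pmatrix} 0 & 1/2 \\ 1/2 & 0\end{pmatrix} \perp \langle \epsilon p^{m+1}\rangle$; the $p^2+p+1$ index-$p$ sublattices are parametrised by hyperplanes $\ker f \subset S/pS$ for $f$ ranging over nonzero linear forms modulo scaling, and the target is $\begin{pmatrix} 0 & p/2 \\ p/2 & 0\end{pmatrix} \perp \langle \epsilon p^{m+1}\rangle$. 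A direct Gram-matrix computation splits the analysis into three cases: (i) $f(y_3) \ne 0$, where the upper-left $2\times 2$ block retains a unit off-diagonal entry and produces a unimodular hyperbolic plane together with a $p^{m+3}$-modular rank-one piece --- wrong type ($p^2$ sublattices); (ii) $f(y_3)=0$ with both $f(y_1),f(y_2)\ne 0$, which introduces a stray rank-one unimodular block --- wrong ($p-1$ sublattices); (iii) $f$ is a unit multiple of $y_1^*$ or $y_2^*$, yielding exactly the desired Jordan form ($2$ sublattices). Hence the total count is $2$.

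The hard part is the case analysis in (ii) and (iii) for the second identity, since one must verify the Jordan decomposition of each candidate Gram matrix from scratch; for $p=2$ the same reasoning applies after replacing Lemma \ref{local} by Lemma \ref{local-2} and checking both $\z_2$-isomorphic models of the unit form. Conceptually, the two surviving hyperplanes in case (iii) correspond to the two isotropic lines of the hyperbolic plane $\begin{pmatrix} 0 & 1/2 \\ 1/2 & 0\end{pmatrix}$ over $\f_p$, which is why the count is $2$ uniformly in $p$ and $m$.
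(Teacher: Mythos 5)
Your proof is correct and follows essentially the same route as the paper: both identities are reinterpreted as counts of sublattices of the fixed lattice ($T/X\simeq \z/p\z\oplus\z/p\z$ for the first, index $p$ for the second), and the counts are extracted from Lemmas \ref{1}, \ref{local} and \ref{local-2}; your case analysis (i)--(iii) for the second identity is just a more explicit version of the paper's observation that exactly the two sublattices $\Gamma_{p,1}(S),\Gamma_{p,2}(S)$ have norm contained in $p\z$ (your cases (i) and (ii) each exhibit a unit-norm vector, which is the same exclusion).
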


\begin{proof}
 Note that  $\sum_{[N] \in \mathcal G_{L,p}(m+1)} \frac{\tilde{r}(N^p,T)}{o(N)}$ is the number of sublattices $X$ of $T$ such that 
$$
T/X \simeq \z/p\z\oplus \z/p\z \quad \text{and} \quad X^{\frac1p} \in  \mathcal G_{L,p}(m+1).
$$ 
Hence the first equality is a direct consequence of Lemmas \ref{1}, \ref{local} and \ref{local-2}. 
 
  To prove the second equality, it suffices to show that there are exactly two sublattices of $S$ with index $p$ whose norm is $p\z$.  
  By Weak Approximation Theorem, there exists a basis $\{x_1, x_2, x_3 \}$ for $S$ such that
$$
(B(x_i,x_j))\equiv\begin{pmatrix}0&\frac12\\ \frac12&0\end{pmatrix}\perp \langle p^{m+1} \delta\rangle \ (\text{mod} \ p^{m+2}),
$$
where $\delta$ is an integer not divisible by $p$. Then  for the following two sublattices defined by
$$
\aligned
&\Gamma_{p,1}(S) = \z px_1 + \z x_2+ \z x_3, &\Gamma_{p,2}(S) = \z x_1 + \z px_2+ \z x_3, 
\endaligned
$$
one may easily show that $\Gamma_{p,i}(S)^{\frac1p} \in \mathcal G_{L,p}(m)$ for any $i=1,2$. Furthermore, norms of all the other  sublattices of $S$ with index $p$ are not contained in $p\z$. This completes the proof.  
\end{proof}

Now 	we define a multi-graph $\mathfrak{G}_{L,p}(m)$ as follows: the set of vertices in $\gr$ is the set of equivalence classes in  $\mathcal G_{L,p}(m)$, say, $\{[T_1], [T_2], \ldots , [T_h] \}$. The set of edges is exactly the set of equivalence classes in $\mathcal G_{L,p}(m+1)$, say, $ \{[S_1], [S_2],\ldots,[S_k] \}$. For each equivalence class $[S_w] \in\mathcal G_{L,p}(m+1)$, two vertices contained in the edge named by $[S_w]$ are defined by $[\Gamma_{p,1}(S_w)^{\frac 1p}]$ and  $[\Gamma_{p,2}(S_w)^{\frac1p}]$, where the lattice $\Gamma_{p,i}(S_w)^{\frac1p}$ that is defined in  Lemma \ref{2-p} is contained in $\mathcal G_{L,p}(m)$. 
 Note that the graph $\gr$ is, in general, a multi-graph that might have a loop. We define an $h \times k$ integer matrix $\mathfrak M_{L,p}(m)=(m_{ij})$ as follows:
\begin{displaymath} 
m_{ij} = \begin{cases} 2 \quad  &\textrm{if  $[S_j]$ is a loop of the vertex $[T_i]$}, \\ 
 1 \quad &\textrm{if  $[S_j]$ is not a loop of the vertex $[T_i]$, though it contains $[T_i]$}, \\  
 0 \quad &\textrm{otherwise.} \end{cases}
\end{displaymath}
Therefore $\mathfrak M_{L,p}(m)$ is the incidence matrix of $\mathfrak{G}_{L,p}(m)$ if the graph $\gr$ is simple.

For any $\z$-lattice $T \in \mathcal G_{L,p}(m)$, we define 
$$
\Phi_p(T)=\{ S \in \mathcal G_{L,p}(m+1) : \Gamma_{p,i}(S)^{\frac1p}=T \ \text{for some $i=1,2$}\}
$$
and 
$$
\Psi_p(T)=\{ M \in \mathcal G_{L,p}(m+2) : \lambda_p(M)=T \}.
$$ 
Then Lemma \ref{2-p} implies that $\vert \Phi_p(T)\vert=p+1$ if $m=0$, $\vert \Phi_p(T)\vert=2p$ otherwise. 

\begin{lem} \label{n=2} Let $T\in \mathcal G_{L,p}(0) $ and  $S, S' \in \Phi_p(T)\ (S\ne S')$ be ternary $\z$-lattices on $V$ and $V^p$, respectively.
Then  there is a unique $\z$-lattice $M \in \Psi_p(T)$ such that 
$\{ \Gamma_{p,1}(M)^{\frac{1}{p}}, \Gamma_{p,2}(M)^{\frac{1}{p}}\}=\{ S, S' \}$.
\end{lem}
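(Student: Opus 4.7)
The plan is to construct $M$ explicitly as a sublattice of $T$ of index $p$ from local data at $p$, and then check both existence and uniqueness.  By the Weak Approximation Theorem I fix a basis $\{x_1,x_2,x_3\}$ of $T$ in which $T_p$ has Gram matrix $\begin{pmatrix}0&1/2&0\\1/2&0&0\\0&0&\epsilon\end{pmatrix}$.  From the analysis of Section~2 (Lemmas~\ref{1} and \ref{local}), each $S\in\Phi_p(T)$ is determined locally at $p$ by a nonzero class $\bar v_S$ on the smooth plane conic $\{a_1a_2+\epsilon a_3^2\equiv 0 \pmod p\}$ in $T/pT$, namely the class of any $v\in T$ such that $v/p$ generates $S$ over $T^p$.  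Given distinct $S,S'\in\Phi_p(T)$ with associated distinct conic classes $\bar v\ne\bar v'$, I set
\[
M := pT+\mathbb{Z}v+\mathbb{Z}v' \subset V
\]
equipped with the form $Q$ inherited from $V$.  This is independent of the lifts; $Q(v),Q(v')\in p\mathbb{Z}$ by the conic condition, so $M$ is integral and non-classic; $M_q=T_q$ for every $q\ne p$; and $pT\subset M\subset T$ with $[M:pT]=p^2$ and $[T:M]=p$.

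The crucial local step is $M_p\simeq\begin{pmatrix}0&1/2\\1/2&0\end{pmatrix}\perp\langle\epsilon p^2\rangle$, which places $M$ in $\mathcal G_{L,p}(2)$.  By Hensel's Lemma applied to the smooth conic the classes $\bar v,\bar v'$ admit isotropic lifts $v^*,v'^*\in T_p$ with $Q(v^*)=Q(v'^*)=0$; completing to a $\mathbb{Z}_p$-basis $\{v^*,v'^*,w^*\}$ of $T_p$, one gets the basis $\{v^*,v'^*,pw^*\}$ of $M_p$.  Its Gram matrix has upper-left $2\times 2$ block $\begin{pmatrix}0&b\\b&0\end{pmatrix}$ with $b=B(v^*,v'^*)$, and the identity $\det T_p=-b^2Q(w^*)+2b\,B(v^*,w^*)B(v'^*,w^*)\in\mathbb{Z}_p^\times$ forces $b\in\mathbb{Z}_p^\times$; hence this block is a hyperbolic plane.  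Orthogonalising $pw^*$ against it over $\mathbb{Z}_p$ produces a summand $\langle c\rangle$ with $c\equiv\epsilon p^2\pmod{(\mathbb{Z}_p^\times)^2}$ via $\det M_p=p^2\det T_p$.  The same basis gives $\Lambda_p(M)_p=pT_p$, so $\lambda_p(M)=T$, and the two index-$p$ sublattices of $M_p$ with norm in $p\mathbb{Z}_p$ are the preimages of the lines $\mathbb{F}_p\bar v,\mathbb{F}_p\bar v'\subset M_p/pT_p$; their $\frac 1p$-rescalings recover $S$ and $S'$.

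For uniqueness, any $M'\in\Psi_p(T)$ with $\{\Gamma_{p,1}(M')^{1/p},\Gamma_{p,2}(M')^{1/p}\}=\{S,S'\}$ satisfies $M'_q=T_q=M_q$ for every $q\ne p$ (the $\lambda_p$-condition only constrains the $p$-component), and locally at $p$ the two $\Gamma_{p,i}$-directions in $M'_p/pT_p\subset T_p/pT_p$ must together span $\mathbb{F}_p\bar v+\mathbb{F}_p\bar v'=M_p/pT_p$; hence $M'_p=M_p$ and $M'=M$ as specific lattices.  The main technical obstacle is the Jordan-decomposition computation of $M_p$; its geometric content is that any chord of the smooth plane conic through two $\mathbb{F}_p$-rational points pulls back to a hyperbolic binary sublattice of $T_p$, yielding exactly the Jordan type $\begin{pmatrix}0&1/2\\1/2&0\end{pmatrix}\perp\langle\epsilon p^2\rangle$.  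The prime $p=2$ requires a separate verification using Lemma~\ref{local-2}, following the same structure.
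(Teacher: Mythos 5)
Your proof is correct; up to the scaling identifications the paper itself uses freely, it produces the same lattice $M$, but it gets there by a genuinely different route in both halves. For existence the paper fixes a basis of $S'$ adapted to the chain $pS\subset S'$ and writes $M$ as an explicit superlattice of $S'$, leaving the verifications that $\lambda_p(M)=T$ and that $\Gamma_{p,1}(M)^{\frac1p},\Gamma_{p,2}(M)^{\frac1p}$ recover $S,S'$ to the reader; you instead build $M=pT+\z v+\z v'$ from below inside $T$, using the parametrization of $\Phi_p(T)$ by the $p+1$ points of the smooth conic $Q\equiv 0$ in $T/pT$, and you carry out the Jordan computation explicitly (the determinant identity forcing $B(v^*,v'^*)\in\z_p^{\times}$ is exactly the right point, and is where the hypothesis $m=0$, i.e.\ $4dT_p\in\z_p^{\times}$, enters). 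The more substantive divergence is uniqueness: the paper counts, observing that $\vert\Psi_p(T)\vert=\frac{p(p+1)}2=\binom{p+1}{2}=\binom{\vert\Phi_p(T)\vert}{2}$, so the surjection $M'\mapsto\{\Gamma_{p,1}(M')^{\frac1p},\Gamma_{p,2}(M')^{\frac1p}\}$ supplied by the existence half is automatically a bijection, whereas you argue directly that any competitor $M'$ must contain both $pT+\z v$ and $pT+\z v'$ and hence contain $M$, so equals it by the index count. Your version is self-contained (it does not import the count $\vert\Psi_p(T)\vert=\frac{p(p+1)}2$ from \cite{co}), while the paper's is shorter but leans on that external fact. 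Two minor polish points: integrality of $M$ is automatic from $M\subset T$ (the condition $Q(v)\in p\z$ is what makes $S$, not $M$, integral), and in the uniqueness step you should record that $\lambda_p(M')=T$ gives $pM'\subset\Lambda_p(M')=pT$ and hence $M'\subset T$, which is what licenses reading $M'/pT$ inside $T/pT$. Deferring $p=2$ to a separate check via Lemma \ref{local-2} is consistent with the paper, which does not treat that case separately here either.
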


\begin{proof}
For any $S, S' \in \Phi_p(T)$,  we have $pS \subset S'$. Furthermore since $S \ne S'$ and $\ord_p(4dS)=1$, $S' / pS \simeq \z/p\z \oplus \z / p^2\z$. Therefore, there is a basis $x_1, x_2, x_3$ for $S'$ such that 
$$
S'= \z{x_1} + \z{x_2} + \z{x_3} , \ \  pS=\z{x_1} + \z{px_2}+ \z{p^2x_3} 
$$
and 
$$
(B(x_i, x_j)) = \begin{pmatrix}p^2a&pb&d \\ pb&pc&e \\ d&e&f \end{pmatrix},
$$
where $a,c,f \in \z$, $b,d,e \in \frac{1}{2} \z$ and $p \nmid 2d$.
Define a $\z$-lattice  
$$
M=\left(\z\left({\frac {x_1}p}\right) + \z{x_2} + \z{x_3}\right)^p \in \mathcal G_{L,p}(2).
$$ 
Then one may easily show that  $\lambda_p(M) = T$ and $\{\Gamma_{p,1}(M)^{\frac1p}, \Gamma_{p,2}(M)^{\frac1p}\} = \{S, S'\}$.  
As pointed out earlier, the number of $\z$-lattices $M' \in \mathcal G_{L,p}(2)$ such that $\lambda_p(M')=T$
for any $T \in \mathcal G_{L,p}(0)$ is $\frac{p(p+1)}2$. Furthermore for any such a $\z$-lattice $M'$, we have $\Gamma_{p,i}(M')^{\frac1p} \in \Phi_p(T)$ for any $i=1,2$ and $\vert \Phi_p(T)\vert=p+1$. Now the uniqueness of $M$ follows from this observation. 
\end{proof}

The above lemma says that if $T \in \mathcal G_{L,p}(0)$, then there is always an edge containing $[S]$ and $[S']$ for any $S,S' \in \Phi_p(T)$. However this is not true in general if $T \in \mathcal G_{L,p}(m)$ for a positive integer $m$.

\begin{lem} \label{n=3}
For a positive integer $m$, let $T \in \mathcal G_{L,p}(m)$ and $S, S' \in \Phi_p(T)$ be ternary $\z$-lattices on $V$ and $V^p$, respectively.
If
$$ 
\lambda_p(S)=\Gamma_{p,1}(T)^{\frac1p} \quad \text{and} \quad \lambda_p(S')=\Gamma_{p,2}(T)^{\frac1p}, 
$$  
then there is a unique $\z$-lattice $M \in \Psi_p(T)$ such that  $\{ \Gamma_{p,1}(M)^{\frac1p}, \Gamma_{p,2}(M)^{\frac1p} \}= \{S, S' \}.$
\end{lem}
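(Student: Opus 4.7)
The plan is to construct $M$ explicitly, following the strategy of Lemma~\ref{n=2}, with the hypothesis on $\lambda_p(S), \lambda_p(S')$ serving to pin down the location of $S$ and $S'$ in coordinates. By the Weak Approximation Theorem, I first fix a basis $\{y_1, y_2, y_3\}$ of $T$ satisfying
\[
(B(y_i, y_j)) \equiv \begin{pmatrix} 0 & \tfrac12 & 0 \\ \tfrac12 & 0 & 0 \\ 0 & 0 & \epsilon p^m\end{pmatrix} \pmod{p^{m+1}},
\]
so that $\Gamma_{p,1}(T) = \z p y_1 + \z y_2 + \z y_3$ and $\Gamma_{p,2}(T) = \z y_1 + \z p y_2 + \z y_3$. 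Since $\Gamma_{p, i_S}(S)^{\frac1p} = T$ gives $T \subset S$ with $|S/T| = p$ as $\z$-modules, I write $S = T + \z v$ with $pv \in T$, and similarly $S' = T + \z v'$.

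Next I translate the $\lambda_p$ conditions into constraints on $v$ and $v'$. Writing $pv = a_1 y_1 + a_2 y_2 + a_3 y_3$, the requirement that $S$ be a non-classic $\z$-lattice in $\mathcal G_{L,p}(m+1)$ forces $a_1 a_2 \equiv 0 \pmod p$; in projective coordinates this singles out two lines in $\mathbb P^2(\mathbb F_p)$ meeting at $(0:0:1)$, which itself must be discarded because it produces the wrong Jordan type for $S$. A direct Jordan orthogonalization of $S$ in the basis $\{y_1, v, y_3\}$ then shows that $\Lambda_p(S) = \Gamma_{p,1}(T)$ holds precisely when $a_1 \equiv 0 \pmod p$. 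Hence the hypothesis pins down $v \equiv \tfrac1p(y_2 + a y_3) \pmod T$ for some $a \in \mathbb F_p$, and symmetrically $v' \equiv \tfrac1p(y_1 + b y_3) \pmod T$ for some $b \in \mathbb F_p$, giving exactly $p \cdot p = p^2$ ordered pairs $(S, S')$ compatible with the hypothesis.

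For existence I construct $M$ in the manner of Lemma~\ref{n=2}. Since $T \subset S, S'$ each with index $p$, one has $pS \subset T \subset S'$, and the condition $\ord_p(4 \cdot dS) = m+1 \ge 2$ forces $S'/pS \simeq \z/p\z \oplus \z/p^2\z$. Choosing a basis $\{x_1, x_2, x_3\}$ of $S'$ adapted to this filtration so that $pS = \z x_1 + \z p x_2 + \z p^2 x_3$, the Gram matrix of $S'$ acquires the shape
\[
\begin{pmatrix} p^2 \alpha & p\beta & d \\ p\beta & p\gamma & e \\ d & e & f \end{pmatrix}
\]
with $p \nmid 2d$. Defining $M = (\z (x_1/p) + \z x_2 + \z x_3)^p$ (with a straightforward adjustment of the outer scaling when $m \ge 2$ to absorb the extra factor $p^{m-1}$ coming from the third Jordan component) yields $M \in \mathcal G_{L,p}(m+2)$, $\lambda_p(M) = T$, and $\{\Gamma_{p,1}(M)^{\frac1p}, \Gamma_{p,2}(M)^{\frac1p}\} = \{S, S'\}$ by direct Gram-matrix computations.

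For uniqueness I count: the discussion after Proposition~\ref{genTL-} gives $|\Psi_p(T)| = p^2$ (since $\ord_p(4 \cdot dM) = m+2 \ge 3$), matching the $p^2$ valid pairs identified above. Since the construction produces a well-defined surjection from $\Psi_p(T)$ onto this set of valid pairs, it must be a bijection, yielding uniqueness of $M$. I expect the main obstacle to be the second step, namely converting the condition $\lambda_p(S) = \Gamma_{p,1}(T)^{\frac 1p}$ into the explicit form $v \equiv \tfrac1p(y_2 + ay_3)$ via a Jordan-basis calculation; afterwards, the construction and verification of $M$ proceed in the same spirit as in Lemma~\ref{n=2}, with only minor bookkeeping to handle the factor $\epsilon p^m$.
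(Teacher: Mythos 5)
Your proof is correct and follows essentially the same route as the paper's: both fix the same Weak-Approximation-adapted basis of $T$, split the $2p$ lattices of $\Phi_p(T)$ into two families of $p$ according to their image under $\lambda_p$ (your $v\equiv\frac1p(y_2+ay_3)$ and $v'\equiv\frac1p(y_1+by_3)$ are exactly the paper's $M_{*,\beta}$ and $M_{\alpha,*}$), and build $M$ from the pair $(S,S')$ — your $(\z(x_1/p)+\z x_2+\z x_3)^p$ unwinds to the paper's $M_{\eta,\tau}=\z(y_1+by_3)+\z(y_2+ay_3)+\z py_3$, with no extra scaling adjustment actually needed for $m\ge 2$. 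The only cosmetic difference is that you get uniqueness by a pigeonhole count against $\vert\Psi_p(T)\vert=p^2$ rather than by reading it off the explicit list $\{M_{\alpha,\beta}\}$; both arguments rest on the same enumeration.
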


\begin{proof}
By Weak Approximation Theorem, there is a basis $x_1, x_2, x_3$ for $T$ such that 
$$
(B(x_i, x_j)) \equiv \begin{pmatrix} 0&\frac12 \\ \frac12&0 \end{pmatrix} \perp \langle p^m \delta \rangle ~ \pmod{p^{m+1}},   
$$
where $\delta$ is an integer not divisible by $p$. We may assume that 
$$ 
\Gamma_{p,1}(T)^{\frac1p}=(\z{px_1}+\z{x_2} + \z{x_3})^{\frac1p}, \ \ \Gamma_{p,2}(T)^{\frac1p} =(\z{x_1}+ \z{px_2} + \z{x_3})^{\frac1p}.
$$
One may easily check that 
$$
\aligned
\Phi_p(T) = \{M_{*,\beta}=&(\z{px_1} + \z{(x_2+\beta x_3)} + \z{px_3})^{\frac1p}: 0 \leq \beta \leq p-1 \} \\
 &\ \ \cup \{ M_{\alpha, *}=(\z{(x_1 + \alpha x_3)} + \z{px_2}+ \z{px_3})^{\frac1p}: 0 \leq \alpha \leq p-1 \}\\
 \endaligned
$$
and
$$ 
\Psi_p(T) = \{ M_{\alpha, \beta}=\z{(x_1+ \alpha x_3)} + \z{(x_2+ \beta x_3)} + \z{px_3} : 0 \leq \alpha, \beta \leq p-1 \}. \\
$$
Since $\lambda_p(M_{*,\beta}) =\Gamma_{p,1}(T)^{\frac1p}$ and $\lambda_p(M_{\alpha,*}) =\Gamma_{p,2}(T)^{\frac1p}$ for any $0 \le \alpha, \beta \le p- 1$, there are $\tau, \eta$ such that
$S=M_{*,\tau}$ and $S'=M_{\eta,*}$.
\begin{center}
\begin{tikzpicture}[node distance = 3.0cm, auto]
\node(T) {$\Gamma_{p,1}(T)^{\frac1p}$};
\node(S) [above right of=T] {$T$};
\node(T') [below right of=S] {$\Gamma_{p,2}(T)^{\frac1p}$};
\node(M) [above left of=S] {$S=M_{*,\tau}$};
\node(L) [above right of=M] {$M_{\eta,\tau}$};
\node(M') [below right of=L] {$S'=M_{\eta,*}$};

\draw[-] (T) to node {} (S);
\draw[-] (S) to node {} (T');
\draw[-] (M) to node {} (S);
\draw[-] (M') to node {} (S);
\draw[-] (L) to node {} (M);
\draw[-] (M') to node {} (L);

\draw[->, dashed] (M) to node {$\lambda_p$} (T);
\draw[->, dashed] (L) to node {$\lambda_p$} (S);
\draw[->, dashed] (M') to node {$\lambda_p$} (T');

\end{tikzpicture}
\end{center}
\begin{center}{\bf 3.1 Figure}
\end{center}
 Now, one may easily check that $M_{\eta, \tau}$ is  the unique lattice in $\Psi_p(T)$ satisfying
$$
\{ \Gamma_{p,1}(M_{\eta,\tau})^{\frac1p}, \Gamma_{p,2}(M_{\eta,\tau})^{\frac1p} \}= \{M_{*,\tau}, M_{\eta,*} \}.
$$
This completes the proof. \end{proof}

\begin{lem} \label{nice} For an integer $m\ge 2$, let $M_1, M_2 \in \mathcal G_{L,p}(m)$ be distinct $\z$-lattices such that $\lambda_p(M_1)=\lambda_p(M_2)=T$. Then there is a path from $[M_1]$ to $[M_2]$ of length  $4$.   
\end{lem}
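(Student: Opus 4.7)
The plan is to construct a walk of length $4$ from $[M_1]$ to $[M_2]$ in $\mathfrak{G}_{L,p}(m)$ by concatenating two length-$2$ sub-walks through a carefully chosen intermediate vertex $[M_3] \in \Psi_p(T)$.  First, by the Weak Approximation Theorem, fix a basis $\{x_1, x_2, x_3\}$ of $T \in \mathcal G_{L,p}(m-2)$ with Gram matrix congruent to $\begin{pmatrix} 0 & \frac{1}{2} \\ \frac{1}{2} & 0 \end{pmatrix} \perp \langle p^{m-2}\delta \rangle$ modulo a sufficiently high power of $p$.  For $m \geq 3$, Lemma \ref{n=3} applied with its ``$m$'' taken to be $m-2$ yields the parametrization
$$
\Psi_p(T) = \{ M_{\alpha,\beta} : 0 \leq \alpha, \beta \leq p-1 \},
$$
together with the identifications $\Gamma_{p,1}(M_{\alpha,\beta})^{\frac{1}{p}} = A_\beta$ and $\Gamma_{p,2}(M_{\alpha,\beta})^{\frac{1}{p}} = B_\alpha$ in $\Phi_p(T) \subset \mathcal G_{L,p}(m-1)$.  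Writing $M_i = M_{\alpha_i,\beta_i}$, I take $M_3 = M_{\alpha_2, \beta_1}$, so that $M_1, M_3$ share the downward neighbor $A_{\beta_1}$ while $M_3, M_2$ share $B_{\alpha_2}$.

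The core step reduces to showing that any two vertices of $\Psi_p(T)$ sharing a common downward neighbor in $\mathcal G_{L,p}(m-1)$ are joined by a length-$2$ walk in $\mathfrak{G}_{L,p}(m)$.  For this I apply Lemma \ref{n=3} once more, at level $m-1$, to $A_{\beta_1}$: this parametrizes $\Phi_p(A_{\beta_1}) \subset \mathcal G_{L,p}(m)$ as a disjoint union of two ``sides'' of size $p$, distinguished by whether $\Gamma_{p,1}^{\frac{1}{p}}$ or $\Gamma_{p,2}^{\frac{1}{p}}$ equals $A_{\beta_1}$, and parametrizes $\Psi_p(A_{\beta_1}) \subset \mathcal G_{L,p}(m+1)$ as a $p \times p$ family of edges in $\mathfrak{G}_{L,p}(m)$, each connecting a vertex of one side to a vertex of the other.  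Since $M_1$ and $M_3$ both satisfy $\Gamma_{p,1}(\cdot)^{\frac{1}{p}} = A_{\beta_1}$ by construction, they lie on the same side, so I may pick any vertex $Y_1$ on the opposite side; Lemma \ref{n=3} furnishes edges $N_1, N_1' \in \Psi_p(A_{\beta_1})$ joining $[M_1]$ to $[Y_1]$ and $[Y_1]$ to $[M_3]$, respectively.  Running the same argument at $B_{\alpha_2}$ produces a length-$2$ walk $[M_3] - [Y_2] - [M_2]$, and concatenation gives the required length-$4$ walk.

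For the case $m = 2$, the parametrization of $\Psi_p(T)$ comes from Lemma \ref{n=2}: each element corresponds to an unordered pair of distinct lattices in $\Phi_p(T)$, which has $p+1$ elements.  If $M_1, M_2$ correspond to pairs $P_1, P_2$, I choose a pair $P_3 \subset \Phi_p(T)$ sharing one element with $P_1$ and a different element with $P_2$ (possible since $|\Phi_p(T)| \geq 3$), let $M_3$ correspond to $P_3$, and apply the same length-$2$ construction at the two shared elements, invoking Lemma \ref{n=3} at level $1$.

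The main subtlety I anticipate is the bookkeeping of the $\Gamma_{p,i}$ labels through the iterated applications of Lemma \ref{n=3}: I must verify that $M_1$ and $M_3$ actually end up on the \emph{same} side of $\Phi_p(A_{\beta_1})$, which reduces to choosing the basis of $A_{\beta_1}$ compatibly with $\{x_1, x_2, x_3\}$.  This is a direct local computation at $p$.
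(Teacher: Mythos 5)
Your proposal is correct and takes essentially the same route as the paper's own proof: your intermediate vertex $M_3=M_{\alpha_2,\beta_1}$ is exactly the paper's middle lattice $M$ (the unique element of $\Psi_p(T)$ whose two downward neighbors are one neighbor of $M_1$ and one of $M_2$ lying on opposite sides), and your $Y_1,Y_2$ play the role of the paper's $M_1',M_2'$, chosen on the opposite side of $\Phi_p(\cdot)$ so that Lemma \ref{n=3} supplies the four connecting edges. The only cosmetic difference is that you carry explicit $(\alpha,\beta)$-coordinates and note that the ``same side'' condition is most cleanly seen via the basis-independent criterion $\lambda_p(M_1)=\lambda_p(M_3)=T\in\{\Gamma_{p,1}(A_{\beta_1})^{\frac1p},\Gamma_{p,2}(A_{\beta_1})^{\frac1p}\}$, which is precisely how the paper phrases it.
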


\begin{proof} Note that if $\{ \Gamma_{p,1}(M_1), \Gamma_{p,2}(M_1)\}=\{ \Gamma_{p,1}(M_2), \Gamma_{p,2}(M_2)\}$, then $M_1=M_2$. Hence, without loss of generality, we may assume that $S_1=\Gamma_{p,1}(M_1)^{\frac1p}$ is different from 
 $S_2=\Gamma_{p,2}(M_2)^{\frac1p}$. If $m\ge 3$, then
$$
\{ \lambda_p(\Gamma_{p,1}(M_i)^{\frac1p}),   \lambda_p(\Gamma_{p,2}(M_i)^{\frac 1p})\}=\{ \Gamma_{p,1}(T)^{\frac1p}, \Gamma_{p,2}(T)^{\frac1p} \}
$$ 
 for any $i=1,2$. Hence we further assume that $\lambda_p(S_1) \ne \lambda_p(S_2)$. 
 Then by Lemmas \ref{n=2} and \ref{n=3}, there is a $\z$-lattice $M \in \mathcal G_{L,p}(m)$ such that 
$\lambda_p(M)=T$ and
$\{ \Gamma_{p,1}(M)^{\frac{1}{p}}, \Gamma_{p,2}(M)^{\frac{1}{p}}\}=\{ S_1, S_2 \}$. 
We define $\z$-lattices $T_1$ and $T_2$ satisfying 
$$
\{ \Gamma_{p,1}(S_1)^{\frac{1}{p}}, \Gamma_{p,2}(S_1)^{\frac{1}{p}}\}=\{ T, T_1 \} \quad \text{and} \quad 
\{ \Gamma_{p,1}(S_2)^{\frac{1}{p}}, \Gamma_{p,2}(S_2)^{\frac{1}{p}}\}=\{ T, T_2 \}.
$$
Let $M'_i \in \mathcal G_{L,p}(m)$ be a $\z$-lattice in $\Phi_p(S_i)$ such that $\lambda_p(M'_i)=T_i$ for $i=1, 2$.
Then by Lemma \ref{n=3}, there are $\z$-lattices $N_1, N_2, N'_1, N'_2$ such that 
two vertices $[M_i]$ and $[M'_i]$ are connected by the edge $[N_i]$, and  two vertices $[M]$ and $[M'_i]$ are connected by the edge $[N'_i]$ for $i=1,2$. Therefore two vertices $[M_1]$ and $[M_2]$ are connected by a path of length $4$ (see Figure 3.2).
\begin{center}
\begin{tikzpicture}[node distance = 2.0cm, auto]
\node (T1) {$T_1$};
\node(S1) [above right of=T1] {$S_1$};
\node(T) [below right of = S1] {$T$};
\node(S2) [above right of = T] {$S_2$};
\node(T2) [below right of = S2] {$T_2 $};
\node(M1') [above left of =S1] {$M_1'$};
\node(N1) [above left of =M1'] {$N_1$};
\node(M1) [below left of =N1] {$M_1$};
\node(N1') [above right of = M1'] {$N_1'$};
\node(M) [above right of =S1] {$M$};
\node(N2) [above right of = M] {$N_2'$};
\node(M2') [below right of =N2] {$M_2'$};
\node(N2')[ above right of = M2'] {$N_2$};
\node(M2) [below right of =N2'] {$M_2$};
\draw[-] (T1) to node {} (S1);
\draw[-] (S1) to node {} (T);
\draw[-] (T) to node {} (S2);
\draw[-] (S2) to node {} (T2);
\draw[-] (M1) to node {} (S1);
\draw[-] (M1') to node {} (S1);
\draw[-] (M) to node {} (S1);
\draw[-] (M) to node {} (S2);
\draw[-] (M2') to node {} (S2);
\draw[-] (M2) to node {} (S2);
\draw[-] (M1) to node {} (N1);
\draw[-] (N1) to node {} (M1');
\draw[-] (M1') to node {} (N1');
\draw[-] (N1') to node {} (M);
\draw[-] (M) to node {} (N2);
\draw[-] (N2) to node {} (M2');
\draw[-] (M2') to node {} (N2');
\draw[-] (N2') to node {} (M2);
\draw[->, dashed] (M1') to node {} (T1);
\draw[->, dashed] (M1) to node {} (T);
\draw[->, dashed] (M) to node {} (T);
\draw[->, dashed] (M2') to node {} (T2);
\draw[->, dashed] (M2) to node {} (T);
\draw[->, bend right=15, dashed] (N1) to node {} (S1);
\draw[->, dashed] (N1') to node {} (S1);
\draw[->, dashed] (N2) to node {} (S2);
\draw[->, bend left=15, dashed] (N2') to node {} (S2);
\end{tikzpicture}
\begin{center}{\bf 3.2 Figure}
\end{center}
\end{center}
The Lemma follows from this.  \end{proof}

\begin{lem} \label{connect} For an integer $m\ge 2$, let $[M], [M']$ be vertices of the graph $\mathfrak G_{L,p}(m)$. Then there is a path from $[M]$ to $[M']$ of length $e([M],[M'])$ in $\mathfrak G_{L,p}(m)$ if and only if  there is a path from $[\lambda_p(M)]$ to $[\lambda_p(M')]$ of length  $e([\lambda_p(M)], [\lambda_p(M')])$ in $\mathfrak G_{L,p}(m-2)$. Furthermore, in both cases,
there is a path satisfying
$$
e([M],[M']) \equiv e([\lambda_p(M)], [\lambda_p(M')]) \pmod 2.
$$
\end{lem}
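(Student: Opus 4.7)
The plan is to use the Watson-type transformation $\lambda_p$ as a graph map from $\mathfrak G_{L,p}(m)$ to $\mathfrak G_{L,p}(m-2)$, sending a vertex $[M]$ of $\mathfrak G_{L,p}(m)$ to the vertex $[\lambda_p(M)]$ of $\mathfrak G_{L,p}(m-2)$ and an edge $[S]\in\mathcal G_{L,p}(m+1)$ to the edge $[\lambda_p(S)]\in\mathcal G_{L,p}(m-1)$; the proof then transports paths in both directions. The preparatory step is to verify the graph-homomorphism property: if $[S]$ has endpoints $[\Gamma_{p,1}(S)^{\frac{1}{p}}]$ and $[\Gamma_{p,2}(S)^{\frac{1}{p}}]$ in $\mathfrak G_{L,p}(m)$, then the endpoints of $[\lambda_p(S)]$ in $\mathfrak G_{L,p}(m-2)$ are $[\lambda_p(\Gamma_{p,1}(S)^{\frac{1}{p}})]$ and $[\lambda_p(\Gamma_{p,2}(S)^{\frac{1}{p}})]$, possibly coinciding so that $[\lambda_p(S)]$ becomes a loop. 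This can be checked by choosing a basis for $S$ via the Weak Approximation Theorem and comparing its two canonical norm-$p\z$ sublattices of index $p$ before and after applying $\lambda_p$.

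The forward direction is then nearly immediate: a path $[M]=[M_0]-[S_1]-\cdots-[S_k]-[M_k]=[M']$ in $\mathfrak G_{L,p}(m)$ pushes forward under $\lambda_p$ to a walk of length $k$ in $\mathfrak G_{L,p}(m-2)$ from $[\lambda_p(M)]$ to $[\lambda_p(M')]$, which contains a sub-path. For the backward direction, given a path $[\lambda_p(M)]=[T_0]-[U_1]-\cdots-[U_\ell]-[T_\ell]=[\lambda_p(M')]$ in $\mathfrak G_{L,p}(m-2)$, I would lift edge by edge starting from $[\widetilde{T}_0]=[M]$: at step $i$, given $[\widetilde{T}_{i-1}]$ with $\lambda_p([\widetilde{T}_{i-1}])=[T_{i-1}]$, invoke Lemma \ref{n=2} (if $m=2$) or Lemma \ref{n=3} (if $m\ge 3$) to produce an edge $[\widetilde{U}_i]$ of $\mathfrak G_{L,p}(m)$ incident to $[\widetilde{T}_{i-1}]$ lifting $[U_i]$, which determines the next vertex $[\widetilde{T}_i]$ with $\lambda_p([\widetilde{T}_i])=[T_i]$. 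If $[\widetilde{T}_\ell]\ne[M']$, both lie in the fiber over $[\lambda_p(M')]$, and Lemma \ref{nice} supplies a length-$4$ connecting path; the lifted path then has length $\ell$ or $\ell+4$, of the same parity as $\ell$.

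The main obstacle is the backward edge-lifting step: for $m=2$ the local degree at a vertex of $\mathfrak G_{L,p}(0)$ is $p+1$ while at a vertex of $\mathfrak G_{L,p}(2)$ it is $2p$, so $\lambda_p$ is not a local covering, and confirming the existence of a lift through a prescribed fiber representative amounts to redoing the case analysis underlying Lemmas \ref{n=2} and \ref{n=3}. A secondary subtlety is the parity claim: in the backward direction it holds by construction since the only adjustment (the Lemma \ref{nice} detour) has even length, while in the forward direction one must argue that the image walk can be reduced to a path by cutting only even-length closed sub-walks. This last point follows because any loop appearing in the image walk comes from an upstairs edge whose two endpoints lie in a common $\lambda_p$-fiber, and Lemma \ref{nice} shows such pairs are joined upstairs by length-$4$ detours, so by inserting such even-length detours the parity of the downstairs path can be matched to that of the upstairs path.
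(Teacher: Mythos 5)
Your overall architecture (view $\lambda_p$ as a graph map, push paths down for the ``only if'' direction, lift them for the ``if'' direction, and use the even-length detours of Lemma \ref{nice} to fix parity) is the right one, but the backward edge-by-edge lift through a \emph{prescribed} fiber representative is a genuine gap, not merely a technicality to be checked. Fix a vertex $\widetilde T\in\mathcal G_{L,p}(m)$ over $T=\lambda_p(\widetilde T)$. The computation in the proof of Lemma \ref{n=3} shows that the $2p$ edges of $\mathfrak G_{L,p}(m)$ incident to $\widetilde T$ are the lattices $M_{*,\beta}$ and $M_{\alpha,*}$, and that their $\lambda_p$-images are exactly the \emph{two} lattices $\Gamma_{p,1}(\widetilde T)^{\frac1p}$ and $\Gamma_{p,2}(\widetilde T)^{\frac1p}$. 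Downstairs, however, the vertex $T$ has $p+1$ (if $m-2=0$) or $2p$ (if $m-2\ge 1$) incident edges, of which only those two specific ones are hit. Hence for a prescribed edge $[U_i]$ incident to $[T_{i-1}]$ there is in general \emph{no} edge incident to $[\widetilde T_{i-1}]$ lifting it (not even up to isometry class, and not even if one only asks that the other endpoint lie over the correct vertex $[T_i]$). You flag this as ``the main obstacle'' and suggest it reduces to redoing the case analysis of Lemmas \ref{n=2} and \ref{n=3}; it does not --- the desired lift simply fails to exist, so the sequential construction anchored at $[M]$ cannot be carried out.

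The paper avoids this by not anchoring the lift at $[M]$ at all. Given the downstairs path with vertices $T_0,\dots,T_k$ and edges $S_1,\dots,S_k$, it first chooses upstairs vertices $M_j\in\Psi_p(T_j)\cap\Phi_p(S_j)\cap\Phi_p(S_{j+1})$, i.e.\ lattices over $T_j$ whose two distinguished index-$p$ sublattices are (scalings of) the two adjacent downstairs edges $S_j$ and $S_{j+1}$; this is exactly the configuration to which Lemmas \ref{n=2} and \ref{n=3} apply, producing edges $N_i$ with $\lambda_p(N_i)=S_i$ joining $M_{i-1}$ to $M_i$. Only then are the two loose ends repaired: $[M]$ and $[M_0]$ lie in the same fiber over $[\lambda_p(M)]$, likewise $[M_k]$ and $[M']$, so Lemma \ref{nice} joins each pair by a path of length $4$, giving a lifted path of length $k+8\equiv k\pmod 2$. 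In short, the choice of fiber representative must be dictated by the downstairs edges, with the endpoint discrepancy absorbed by even-length detours at the two ends, rather than propagated step by step from $[M]$. Your forward direction and the parity bookkeeping via Lemma \ref{nice} are fine and consistent with the paper.
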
 

\begin{proof} Note that ``only if'' part is trivial.  Assume that $[\lambda_p(M)]$ and $[\lambda_p(M')]$ are connected by a path with edges $[S_1],[S_2],\dots,[S_k]$ as in Figure 3.3, where 
$$
\{ \Gamma_{p,1}(S_i)^{\frac1p}, \Gamma_{p,2}(S_i)^{\frac1p}\}=\{ T_{i-1},T_{i}\}
$$ 
for any $i=2,3,\dots,k-1$.  

\begin{center}
\begin{tikzpicture}[auto]
\node(lL) {$\lambda_p(M)$};
\node(em) [above of =lL, node distance=2.82cm]{};
\node(S1) [above right of=lL, node distance = 2.0cm] {$S_1$};
\node(T1) [below right of=S1, node distance = 2.0cm] {$T_1$};
\node(L0) [right of=em, node distance = 0.4cm] {$M_0$};
\node(L) [left of =em, node distance = 0.4cm]{$M_{}$};

\node(L1) [above right of=S1, node distance = 2.0cm] {$M_1$};

\node(1) [ above right of=T1, node distance = 2.0cm] {};
\node(dots) [right of= 1, node distance = 1.0cm] {$\dots$}; 
\node(2) [right of = dots, node distance = 1.0cm] {};
\node(3) [above of = dots, node distance = 1.4cm] {$\dots$};
\node(4) [below of = dots, node distance = 1.4cm] {$\dots$};

\node(Tk-1) [below right of=2, node distance = 2.0cm] {$T_{k-1}$};
\node(Sk) [above right of=Tk-1, node distance = 2.0cm] {$S_k$};
\node(lL') [below right of=Sk, node distance = 2.0cm] {$\lambda_p(M')$};
\node(em') [above of = lL', node distance = 2.82cm] {};
\node(Lk-1) [above left of=Sk, node distance = 2.0cm] {$M_{k-1}$};
\node(L0') [left of=em', node distance = 0.4cm] {$M_k$};
\node(L') [ right of = em', node distance = 0.4cm] {$M'_{}$};

\draw[-] (lL) to node {} (S1);
\draw[-] (S1) to node {} (T1);
\draw[-] (L0) to node {} (S1);
\draw[-] (L1) to node {} (S1);
\draw[-] (Lk-1) to node {} (Sk);
\draw[-] (Sk) to node {} (Tk-1);
\draw[-] (L0') to node {} (Sk);
\draw[-] (Sk) to node {} (lL');

\draw[->, dashed] (L0) to node {$\lambda_p$} (lL);
\draw[->, dashed] (L1) to node {$\lambda_p$} (T1);
\draw[->, dashed] (Lk-1) to node {$\lambda_p$} (Tk-1);
\draw[<-, dashed] (lL') to node {$\lambda_p$} (L0');

\draw[-] (T1) to node {} (1);
\draw[-] (Tk-1) to node {} (2);
\draw[-] (L1) to node {} (1);
\draw[-] (Lk-1) to node {} (2);
 
\draw[<-, dashed] (lL) to node {$\lambda_p$} (L);
\draw[->, dashed] (L') to node {$\lambda_p$} (lL');
\end{tikzpicture}
\begin{center}{\bf 3.3 Figure}
\end{center}
\end{center}

Then for any $i=0,1,\dots,k$, there are $\z$-lattices $M_i$ such that $M_0 \in \Psi_p(\lambda_p(M)) \cap \Phi_p(S_1)$, $M_k \in \Psi_p(\lambda_p(M')) \cap \Phi_p(S_k)$, and $M_j  \in \Psi_p(T_j) \cap \Phi_p(S_j) \cap \Phi_p(S_{j+1})$ for any $j=1,2,\dots,k-1$. Now by Lemma \ref{n=3}, there are $\z$-lattices $N_i$ such that
$$
\{\Gamma_{p,1}(N_i)^{\frac1p},\Gamma_{p,2}(N_i)^{\frac1p}\}=\{ M_{i-1},M_i\} \quad \text{and} \quad \lambda_p(N_i)=S_i
$$
for any $i=1,2,\dots,k$.  Since both $[M], [M_0]$ and $[M_k], [M']$ are connected by a path of length $4$  by Lemma \ref{nice}, $[M]$ and $[M']$ are connected by a path of length $k+8$.    \end{proof}


We investigate the graph $\mathfrak{G}_{L,p}(0)$ in more detail. Let $T \in \mathcal G_{L,p}(0)$ be a $\z$-lattice.
Note that the graph $Z(T,p)$ constructed in \cite{sp1} is slightly different from our graph (see also \cite{bh}). In fact, the graph $Z(T,p)$ is a tree having infinitely many vertices. However our graph is finite and might have a loop. Two vertices $[T_i], [T_j] \in \mathfrak{G}_{L,p}(0)$ are connected by an edge if and only if there are $\z$-lattices $T_i' \in [T_i]$ and $T_j' \in [T_j]$ such that $T_i'$ and $T_j'$ are connected by an edge in the graph $Z(T,p)$. If two lattices $T_i ,T_j \in \mathcal G_{L,p}(0)$ are spinor equivalent, then  both $[T_i]$ and $[T_j]$ are contained in the same connected component. Moreover, each connected component of $\mathfrak{G}_{L,p}(0)$ contains at most two spinor genera, and it contains
only one spinor genus if and only if $\bold{j}(p) \in P_D J_{\q}^T$, where $D$ is the set of positive rational numbers and 
$$
\bold{j}(p) = (j_{q}) \in J_\q \quad \text{such that $j_p =p$ and $j_q = 1$ for any prime $q \ne p$}.
$$
 We say that $\mathfrak{G}_{L,p}(0)$ is of $O$-type if each connected component of $\mathfrak{G}_{L,p}(0)$ contains only one spinor genus, and it is of $E$-type otherwise. If $\mathfrak{G}_{L,p}(0)$ is of $E$-type, then adjacent classes are contained in different spinor genera (for details, see \cite {bh}), that is, each connect component of the graph $\mathfrak{G}_{L,p}(0)$ is a bipartite graph.   

Assume that
\begin{equation}
\mathcal G_{L,p}(0)=\{[T_1],[T_2] \dots,[T_h]\} \quad \text{and} \quad \mathcal G_{L,p}(1)=\{[S_1], [S_2],\dots,[S_k]\}
\end{equation}
are {\it ordered} sets of equivalence classes in each genus. 
We define
$$
\mathfrak{M}= \left(\frac{r(T_i^p, S_j)}{o(T_i)}\right) \in M_{h,k}(\z) \  \text{and} \ \mathfrak N=\mathfrak N_{L,p}(0)=\left(\frac{r(T_i^p, S_j)}{o(S_j)}\right) \in M_{h,k}(\z).
$$
In fact, $\mathfrak{M}$ equals to $\mathfrak M_{L,p}(0)$, which is defined earlier. There is a nice relation between $\mathfrak M$, $\mathfrak N$ and  the {\it Eichler's Anzahlmatrix} $\pi_p(T)$ defined in \cite{e}. 

\begin{defn} \label{Eichler} Under the assumptions given above,
the matrix 
$$
\pi_p(T)=\begin{pmatrix}\displaystyle \frac{r(pT_i,T_j)}{o(T_i)}-\delta_{ij}\end{pmatrix} \quad (1\leq i,j \leq h) 
$$
is called the Eichler's Anzahlmatrix of $T$ at $p$.
\end{defn}

Note that $\pi_p(T)$ is independent of the choice of the lattice $T \in \mathcal G_{L,p}(0)$. 

\begin{lem} \label{omit} For any $\z$-lattices $T \in \mathcal G_{L,p}(0)$ and $S \in \mathcal G_{L,p}(1)$, we have $r(S^p, T) = r(T^p, S)$.
\end{lem}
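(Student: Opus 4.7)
My plan is to deduce this identity from Lemma \ref{tilde r} applied to $(\ell, L) = (S^p, T)$, after first showing that every isometric embedding $\sigma\colon S^p \to T$ has cokernel isomorphic to $(\z/p\z)\oplus (\z/p\z)$, so that $r(S^p, T) = \tilde r(S^p, T)$.

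To invoke Lemma \ref{tilde r} I first verify $d(S^p) = p^4\, dT$: from $T_p \simeq H \perp \langle \epsilon \rangle$ and $S_p \simeq H \perp \langle \epsilon p \rangle$ one gets $\ord_p(dS) = \ord_p(dT) + 1$, while at every $q \ne p$ the lattices $T_q$ and $S_q$ are scalings of $L_q$, forcing the ratio to be a $q$-adic unit. Lemma \ref{tilde r} then yields $\tilde r(S^p, T) = r(pT, S^p)$. The sublattice $pT \subset T$ with its restricted form is isometric to $T$ rescaled by $p^2$, and a $\z$-linear map $\phi\colon T \to S$ satisfies $Q_{S^p}(\phi(x)) = p^2 Q_T(x)$ if and only if $Q_S(\phi(x)) = p Q_T(x)$; hence $r(pT, S^p) = r(T^p, S)$ by an immediate bijection. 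Combined, these two equalities give $r(T^p, S)$ once the first reduction is in hand.

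The main obstacle is establishing $r(S^p, T) = \tilde r(S^p, T)$. A discriminant comparison forces $|T/\sigma(S^p)| = p^2$ for any embedding $\sigma\colon S^p \to T$, so the cokernel is either $(\z/p\z)\oplus (\z/p\z)$ or $\z/p^2\z$. I will rule out the cyclic case by a local computation at $p$. Suppose $T/\sigma(S^p) \simeq \z/p^2\z$ and pick a basis $\{e_1, e_2, e_3\}$ of $T$ with $\sigma(S^p) = \z e_1 + \z e_2 + \z p^2 e_3$. Because the norm ideal of $S^p$ lies in $p\z_p$, evaluating $Q$ on $e_1$, $e_2$, and $e_1 + e_2$ forces $Q(e_1), Q(e_2) \in p\z_p$ and $2 B(e_1, e_2) \in p\z_p$. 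Writing out the six signed products in $\det(B(e_i, e_j))$ and bounding $p$-adic valuations term by term (using only $B(e_i, e_j) \in \tfrac12 \z_p$ and $Q(e_i) \in \z_p$ for the remaining entries) gives $\ord_p(\det(B(e_i, e_j))) \ge 1$ when $p$ is odd, contradicting $\ord_p(dT)=0$. For $p=2$ the same expansion applied to the integral matrix $(2B(e_i, e_j))$ yields $\ord_2(\det(2B(e_i, e_j))) \ge 2$, whereas $\det(2B(e_i, e_j)) = 8\, dT$ has $\ord_2 = 1$; again a contradiction. Hence no cyclic cokernel is possible.

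I expect the valuation bookkeeping at $p=2$ to be the most delicate step, because of the half-integer entries, but replacing $(B(e_i, e_j))$ by the integral matrix $(2B(e_i, e_j))$ makes each summand have a clean lower bound and the contradiction drops out uniformly with the odd-prime case.
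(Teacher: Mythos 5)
Your proposal is correct and follows essentially the same route as the paper: both arguments reduce the statement to Lemma \ref{tilde r} by showing that every embedding $\sigma\colon S^p\to T$ has cokernel $(\z/p\z)\oplus(\z/p\z)$, and both rule out the cyclic cokernel by observing that $\mathfrak n(\sigma(S^p))\subset p\z$ would force $Q(x_1),Q(x_2),2B(x_1,x_2)\equiv 0\pmod p$, contradicting the fact that $4dT$ is prime to $p$. You merely spell out the determinant valuation bookkeeping (including the $p=2$ case and the final rescaling $r(pT,S^p)=r(T^p,S)$) that the paper leaves implicit.
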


\begin{proof} First we show that $\widetilde{R}(S^p,T)=R(S^p,T)$. Suppose that there is a $\sigma \in R(S^p,T)$ such that $T/\sigma(S^p) \simeq \z/p^2\z$. Then there is a basis for $T$ such that
$$
T=\z x_1+\z x_2+\z x_3 \quad \text{and} \quad \sigma(S^p)=\z x_1+\z x_2+\z (p^2x_3).
$$
Since $\mathfrak n(\sigma(S^p)) \subset p\z$, we have 
$$
Q(x_1) \equiv Q(x_2) \equiv 2B(x_1,x_2) \equiv 0 \pmod p.
$$
 This is a contradiction to the fact that $4dT$ is not divisible by $p$. Therefore the lemma follows from Lemma \ref{tilde r}. \end{proof}

For $\z$-lattices $X_1,X_2, Y_1$ and $Y_2$, we write $(X_1,X_2) \simeq (Y_1,Y_2)$ if $X_1 \simeq Y_1$ and  $X_2 \simeq Y_2$, or 
 $X_1 \simeq Y_2$ and  $X_2 \simeq Y_1$. 

\begin{prop}\label{Eichler-rel} Under the notations and assumptions given above, we have 
$$
\pi_p(T) + (p+1)I =\mathfrak{M}\cdot \mathfrak{N}^t.
$$
\end{prop}

\begin{proof} Let $\mathfrak U_{ij}$ be the set of sublattices $X$ of $T_j$ such that  
$$
X \simeq pT_i \quad \text{and}  \quad T_j/X \not \simeq \z/p\z\oplus \z/p\z\oplus \z/p\z, 
$$
and let $\mathfrak V_{ij}$ be the set of sublattices $Y$ of $T_j$ such that 
$$
Y^{\frac1p} \in \mathcal G_{L,p}(1) \quad \text{and} \quad \left(\Gamma_{p,1}(Y^{\frac 1p}),\Gamma_{p,2}(Y^{\frac1p})\right) \simeq  (T_i^p,T_j^p),
$$ 
where $\Gamma_{p,i}(Y^{\frac1p})$ is a sublattice of $Y^{\frac1p}$ with index $p$ defined in Lemma \ref{2-p}. Note that $\pi_p(T)_{ij}=\vert \mathfrak U_{ij}\vert$. Now we define a map $\Phi : \mathfrak U_{ij} \mapsto \mathfrak V_{ij}$ as follows. Assume that $X \in \mathfrak U_{ij}$. Then one may easily show that $T_j/X \simeq \z/p\z \oplus \z/p^2\z$. Hence there is a basis $x_1,x_2,x_3$ for $T_j$ such that 
$$
T_j=\z x_1+\z x_2+\z x_3 \quad \text{and}  \quad X=\z x_1+\z (px_2)+\z (p^2x_3).
$$
Since the integer $4d(T_j)$ is not divisible by $p$ and $Q(x_1) \equiv 0 \pmod {p^2}$, $2B(x_1,x_2) \equiv 0 \pmod p$, neither $Q(x_2)$ nor $2B(x_1,x_3)$ is divisible by $p$. 
Define $\Phi(X):=Y=\z x_1+\z(px_2)+\z (px_3)$. Clearly, $Y=\Lambda_p(T_j \cap \frac1p X)$. Hence it is independent of the choice of basis for $T_j$. Furthermore one may easily check that $\Phi(X)=Y \in \mathfrak V_{ij}$. 
Conversely, there are exactly two sublattices of $Y^{\frac 1p}$ with index $p$ whose norm is contained in $p\z$, and one of them is equal to $T_j^p$. If we define the other one, as a sublattice of $Y$, by $\Psi(Y)$, then $\Phi\circ \Psi=\Psi \circ \Phi=Id$.   
Therefore $\pi_p(T)_{ij}=\vert \mathfrak V_{ij}\vert$. Now from the definition,
$$
\vert \mathfrak V_{ij}\vert=\sum_{w=1}^k \displaystyle \frac {r(S_w^p,T_j)}{o(S_w)} \eta_w,
$$
where 
$$
\eta_w=\begin{cases}1  \qquad \text{if $(\Gamma_{p,1}(S_w),\Gamma_{p,2}(S_w))\simeq (T_j^p,T_i^p)$},\\
  0\qquad \text{otherwise.} \end{cases} 
$$
Since  $r(T_j^p,S_w)=r(S_w^p,T_j)$ by Lemma \ref{omit}, 
$$
\vert \mathfrak V_{ij}\vert=
\sum_{w=1}^k \displaystyle \frac {r(S_w^p,T_j)}{o(S_w)} \left(\frac{r(T_i^p,S_w)}{o(T_i)}-\delta_{ij}\right)=\begin{cases}
 \sum_{w=1}^k \mathfrak M_{iw} (\mathfrak N^t)_{wj}  \!&\text{if $i\ne j$,} \\
 \sum_{w=1}^k \mathfrak M_{iw} (\mathfrak N^t)_{wj} -(p+1) \! &\text{if $i=j$,} \\ \end{cases}
$$
by Lemma \ref{2-p}. The proposition follows from this.
\end{proof}

The following theorem states that the rank of $\mathfrak M_{L,p}(0)=\mathfrak M$ is related with some properties of the graph  $\mathfrak{G}_{L,p}(0)$. 
 
\begin{thm} \label{equicon} The followings are all equivalent: 
\begin{enumerate}
	\item $\mathfrak{G}_{L,p}(0)$ is of $O$-type;
	\item $\text{rank}({\mathfrak{M}}) = h$;
	\item $\pi_p(T)$ does not have an eigenvalue $-(p+1)$;
	\item $g^+(\mathcal G_{L,p}(0)) = g^+(\mathcal G_{L,p}(1))$. 
\end{enumerate}
Furthermore, if $\mathfrak{G}_{L,p}(0)$ is of $E$-type, then $g^+(\mathcal G_{L,p}(0)) = 2 g^+(\mathcal G_{L,p}(1))$, where $g^+(\mathcal G_{L,p}(0))$ is the number of spinor genera in $\mathcal G_{L,p}(0)$.
\end{thm}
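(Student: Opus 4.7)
The plan is to prove the equivalences in the order $(2)\Leftrightarrow(3)$, then $(1)\Leftrightarrow(2)$, then $(1)\Leftrightarrow(4)$, with the furthermore assertion falling out of the last step.

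\emph{Step 1: $(2)\Leftrightarrow(3)$.} I would factor $\mathfrak M = D_T A$ and $\mathfrak N = A D_S$, where $A = (r(T_i^p,S_j))$ is the $h\times k$ representation-number matrix and $D_T = \mathrm{diag}(o(T_i)^{-1})$, $D_S = \mathrm{diag}(o(S_j)^{-1})$ are positive diagonal. Proposition \ref{Eichler-rel} then reads
$$
\pi_p(T)+(p+1)I \;=\; \mathfrak M\mathfrak N^t \;=\; D_T\,A\,D_S\,A^t.
$$
A standard Gram-matrix argument (the map $u\mapsto D_S^{1/2}A^tu$ has the same kernel as $u\mapsto AD_SA^tu$) gives $\mathrm{rank}(\mathfrak M\mathfrak N^t) = \mathrm{rank}(A) = \mathrm{rank}(\mathfrak M)$. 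Hence $\pi_p(T)+(p+1)I$ is invertible iff $\mathrm{rank}(\mathfrak M) = h$.

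\emph{Step 2: $(1)\Leftrightarrow(2)$.} The matrix $\mathfrak M$ is the vertex-edge incidence matrix of the multigraph $\mathfrak G_{L,p}(0)$, with entry $2$ on loops and $1$ on non-loop incidences. Any $v\in\ker(\mathfrak M^t)$ must satisfy $v_i+v_j=0$ on each non-loop edge $\{T_i,T_j\}$ and $v_i=0$ on each vertex carrying a loop. Propagating along paths, $v$ vanishes on any component containing a loop or an odd cycle, while on any loop-free bipartite component $v$ forms a one-dimensional family of $\pm c$ labelings of the two parts. Therefore $\dim\ker(\mathfrak M^t)$ equals the number of bipartite connected components, so $\mathrm{rank}(\mathfrak M) = h$ iff no component is bipartite. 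Combining with the spinor-theoretic characterisation recalled just before the theorem ($E$-type forces every component to be bipartite, while $O$-type — equivalently $\mathbf j(p)\in P_D J_{\q}^T$ — produces an odd closed walk in each component via an isometry identifying bipartite classes of the local tree $Z(T,p)$), this gives $\mathrm{rank}(\mathfrak M) = h$ exactly in the $O$-type case.

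\emph{Step 3: $(1)\Leftrightarrow(4)$ and the furthermore.} The strategy is to identify the connected components of $\mathfrak G_{L,p}(0)$ with the spinor genera of $\mathcal G_{L,p}(1)$. The key point to verify is that two edges $[S],[S']\in\mathcal G_{L,p}(1)$ incident to a common vertex $[T]\in\mathcal G_{L,p}(0)$ always lie in a common spinor genus of $\mathcal G_{L,p}(1)$; this reduces to exhibiting an isometry of $V^p$ mapping $S$ to $S'$ with trivial spinor norm, via a direct comparison of the two sublattices $\Gamma_{p,i}(S)$ constructed in Lemma \ref{2-p}. Granting this, $g^+(\mathcal G_{L,p}(1))$ equals the number $c$ of connected components of $\mathfrak G_{L,p}(0)$. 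In the $O$-type case each component consists of a single spinor genus of $\mathcal G_{L,p}(0)$, so $g^+(\mathcal G_{L,p}(0)) = c = g^+(\mathcal G_{L,p}(1))$; in the $E$-type case each component is bipartite and its two parts form distinct spinor genera of $\mathcal G_{L,p}(0)$, so $g^+(\mathcal G_{L,p}(0)) = 2c = 2\,g^+(\mathcal G_{L,p}(1))$. Both $(1)\Leftrightarrow(4)$ and the furthermore part follow.

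\emph{Main obstacle.} The delicate step is the spinor-norm verification in Step 3 — showing that edges sharing a vertex share a spinor genus of $\mathcal G_{L,p}(1)$. This is a calculation at $p$ comparing the two sublattice choices $\Gamma_{p,i}(S)$, parallel to (but distinct from) the spinor-norm argument for $\mathcal G_{L,p}(0)$ implicit in the $O/E$ dichotomy recalled from \cite{bh}, and is where the idele $\mathbf j(p)$ and the coset $P_D J_{\q}^T$ reappear.
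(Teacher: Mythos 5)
Your Steps 1 and 2 essentially reproduce the paper's argument. For $(2)\Leftrightarrow(3)$ the paper only asserts $\mathrm{rank}(\mathfrak M)=\mathrm{rank}(\mathfrak M\mathfrak N^t)$ and your factorization $\mathfrak M\mathfrak N^t=D_T A D_S A^t$ is precisely the right justification. For $(1)\Leftrightarrow(2)$ your kernel analysis of $\mathfrak M^t$ is a clean repackaging of the paper's ``rank over $\mathbb F_2$ is $|V|-1$; rank over $\q$ is $|V|$ iff there is an odd cycle'' argument, but note that the substantive content is the implication ($O$-type $\Rightarrow$ every component contains an odd closed walk), which you dispose of in a parenthesis by appeal to material ``recalled just before the theorem''. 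The preamble only records the $E$-type direction (adjacent classes in different spinor genera). The paper proves the $O$-type direction by taking adjacent, spinor-equivalent vertices $[T_1],[T_2]$, applying the Strong Approximation Theorem for rotations to replace the idelic datum by a global rotation $\tau$, and invoking Lemma 4.2 of \cite{bh} to write $\sigma^{-1}(T_1)=\z(p^nx_1)+\z(p^{-n}x_2)+\z x_3$ inside $\tau(T_2)=\z x_1+\z x_2+\z x_3$ with $n$ \emph{even}, i.e.\ an even path whose concatenation with the edge is an odd cycle. Some version of this computation must appear; your sketch names the mechanism but does not carry it out.

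Step 3 is where you genuinely diverge from the paper, and where there is a real gap. The paper obtains $(1)\Leftrightarrow(4)$ and the ``furthermore'' in two lines from $g^+(\mathcal L)=[J_\q:P_DJ_\q^{\mathcal L}]$ together with the identity $P_DJ_\q^{\mathcal G_{L,p}(1)}=P_DJ_\q^{\mathcal G_{L,p}(0)}\cup\mathbf j(p)\cdot P_DJ_\q^{\mathcal G_{L,p}(0)}$; no graph theory enters. Your combinatorial substitute identifies $g^+(\mathcal G_{L,p}(1))$ with the number $c$ of components of $\mathfrak G_{L,p}(0)$, but this requires two directions and you only address one. (a) Edges at a common vertex are spinor equivalent: this is true, and the reason is that for $S\in\mathcal G_{L,p}(1)$ the local spinor norm group $\theta(O^+(S_p))$ is all of $\q_p^\times$ modulo squares (the hyperbolic plane contributes the units, the $\langle\epsilon p\rangle$-vector contributes $p$), so the natural isometry between two index-$p$ superlattices of $T^p$ — whose spinor norm is $p$ times a unit, \emph{not} trivial as you wrote — is absorbed. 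But (a) alone only gives $g^+(\mathcal G_{L,p}(1))\le c$. (b) You also need that spinor-equivalent lattices in $\mathcal G_{L,p}(1)$ lie in the same component; this is the content of Lemma \ref{coco}, proved later in the paper using Lemma \ref{n=2}: a spinor equivalence $\sigma\Sigma$ with $S'=\sigma\Sigma S$ permutes the distinguished sublattices $\Gamma_{p,i}(S)$ of Lemma \ref{2-p}, hence carries the endpoints of $[S]$ to spinor-equivalent endpoints of $[S']$, which by the preamble lie in the same component. Without (b) your count of spinor genera can undershoot $c$ and neither $(1)\Leftrightarrow(4)$ nor the factor of $2$ in the ``furthermore'' follows. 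Either supply (b) explicitly or adopt the paper's idelic Step 3, which is shorter and makes the dichotomy transparent.
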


\begin{proof} 
{\bf (1) $\Leftrightarrow$ (2): } Assume that $\mathfrak{G}_{L,p}(0)$ is of $O$-type. Without loss of generality, we may assume that $\mathfrak{G}_{L,p}(0)$ is connected, that is, every $\z$-lattice in  $\mathfrak{G}_{L,p}(0)$ is spinor equivalent. It is well known that the rank of an incidence matrix of a connected graph $G(V,E)$ over $\mathbb F_2$ is $\vert V\vert-1$. Furthermore if the graph $G$ contains an odd cycle, then the rank of the incidence matrix of $G$ over $\mathbb Q$ is equal to the number of vertices. Hence it suffices to show that the graph $\mathfrak{G}_{L,p}(0)$ contains an odd cycle, even though it might contains a loop. Assume that $[T_1]$ and $[T_2]$ be adjacent vertices in $\mathfrak{G}_{L,p}(0)$. Since they are spinor equivalent, there is an isometry $\sigma \in O(V)$ and $\Sigma=(\Sigma_p) \in J_V'$ such that $T_1=\sigma\Sigma(T_2)$, where $V=\q \otimes T_1$. 
Let $\Phi=\{ q \in  P-\{p\} \mid (\sigma^{-1}(T_1))_q=(T_2)_q\}$ and $\Psi=P-(\Phi\cup \{p\})$, where $P$ is the set of all primes. Now by Strong Approximation Theorem for Rotations, for any $\epsilon >0$, there is a rotation $\tau \in O'(V)$ such that
$$ 
\| \tau-\Sigma_q \|_q <\epsilon \ \ \text{for any $q \in \Psi$} \quad \text{and} \quad \| \tau \|_q=1 \ \ \text{for any $q \in \Phi$}.
$$
Therefore we have 
$$
\sigma^{-1}(T_1)_q=\tau(T_2)_q \ \  \text{for any $q \ne p$} \quad \text{and} \quad  \Sigma_p\circ \tau^{-1} (\tau(T_2)_p)=\sigma^{-1}(T_1)_p,
$$ 
where $\Sigma_p\circ \tau^{-1} \in O'(V_p)$. Consequently, there is an even integer $n$ and a basis $\{ x_1,x_2,x_3\}$ for $\tau(T_2)$ such that
$$
\tau(T_2)= \z x_1+\z x_2+\z x_3 \quad \text{and} \quad \sigma^{-1}(T_1) = \z(p^nx_1)+\z(p^{-n}x_2)+\z x_3,
$$
by Lemma 4.2 of \cite {bh}. This implies that there is a path from $[T_1]$ to $[T_2]$ with even edges, and hence the graph  $\mathfrak{G}_{L,p}(0)$ contains an odd cycle.  

Assume that $\mathfrak{G}_{L,p}(0)$ is of $E$-type. Since any two adjacent vertices are contained in different spinor genera in this case, it is a bipartite (multi-) graph. Therefore the rank of the matrix $\mathfrak M_{L,p}(0)$ is $h-1$.

\noindent {\bf (2) $\Leftrightarrow$ (3) :}  Note that $\text{rank}(\mathfrak M)=\text{rank}(\mathfrak M\mathfrak N^t )$. Hence the assertion follows directly from Proposition \ref{Eichler-rel}.

\noindent {\bf (1) $\Leftrightarrow$ (4) :}  Note that $g^+(\mathcal L) = [J_\q : P_D J_\q^{\mathcal L}]$ for any genus $\mathcal L$ with rank greater than $2$. Since 
$$
P_D J_\q^{\mathcal G_{L,p}(1)} = P_D J_\q^{\mathcal G_{L,p}(0)} \cup \bold{j}(p) \cdot P_D J_\q^{\mathcal G_{L,p}(0)},
$$ 
$g^{+}(\mathcal G_{L,p}(1))=g^{+}(\mathcal G_{L,p}(0))$ if and only if $\bold{j}(p) \in P_D J_\q^{\mathcal G_{L,p}(0)}$, that is, $\mathfrak{G}_{L,p}(0)$ is of $O$-type. Furthermore if $\mathfrak{G}_{L,p}(0)$ is of $E$-type, then $g^+(\mathcal G_{L,p}(0)) = 2g^+(\mathcal G_{L,p}(1))$.
\end{proof}



Now, we consider the general case. 
For any positive integer $m$, we say that a graph $\mathfrak G_{L,p}(m)$ is of $E$-type if $m$ is even and 
$\mathfrak G_{L,p}(0)$ is of $E$-type, and $O$-type otherwise.  

Assume that  $\mathfrak G_{L,p}(m)$ is of $E$-type and $M \in \mathcal G_{L,p}(m)$. Since the map $\lambda_p^{\frac m2} : \spn(K) \to \spn(\lambda_p^{\frac m2}(K))$ is surjective for any $K \in \mathcal G_{L,p}(m)$, 
there is a $\z$-lattice $M' \in \mathcal G_{L,p}(m)$ such that $M' \not \in \spn(M)$ and $[M']$ is connected to $[M]$ by a path by Lemma \ref{connect}. Furthermore, since $g^{+}(\mathcal G_{L,p}(m))=g^{+}(\mathcal G_{L,p}(0))$ for any even $m$, every $\z$-lattice $M'$ satisfying the above condition forms a single spinor genus.  
 From the existence of such a $\z$-lattice $[M']$, we may define  
$$
\text{Cspn}(M)=\begin{cases} \text{spn}(M) \quad &\text{if $\mathfrak G_{L,p}(m)$ is of $O$-type,}\\
 \text{spn}(M) \cup \spn(M')  \quad &\text{otherwise},
 \end{cases}
$$

\begin{lem} \label{coco} For a $\z$-lattice $M \in \mathcal G_{L,p}(m)$, the set of all vertices in the connected component of $\mathfrak G_{L,p}(m)$ containing $[M]$ is the set of equivalence classes in $\text{Cspn}(M)$.
\end{lem}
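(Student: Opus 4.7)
The plan is to argue by induction on $m$, with base cases $m=0$ and $m=1$ and the inductive step for $m\ge 2$ provided by Lemma~\ref{connect}.

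For $m=0$, I would appeal directly to the discussion preceding Theorem~\ref{equicon} and to the proof of (1)$\Leftrightarrow$(2) there: adjacent classes are shown to be either spinor equivalent (in the $O$-type case) or to lie in spinor genera differing by the coset of $\bold{j}(p)$ (in the $E$-type case). By the very definition of $\text{Cspn}$, the connected component of $[T]$ then coincides with $\text{Cspn}(T)$. For $m=1$, which by definition is of $O$-type, I must show each connected component is a single spinor genus. One direction comes from a local spinor-norm computation using the explicit bases exhibited in Lemma~\ref{n=3}, showing that adjacent classes are spinor equivalent; the reverse repeats the strong-approximation argument from the proof of Theorem~\ref{equicon} (1)$\Leftrightarrow$(2) in the scaled space $V^p$, producing a decomposition $\sigma^{-1}(M')=\z(p^n x_1)+\z(p^{-n}x_2)+\z x_3$ with $n$ even, which translates into an explicit path of edges in $\mathfrak G_{L,p}(1)$.

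For the inductive step $m\ge 2$, Lemma~\ref{connect} reduces the question of whether $[M']$ lies in the connected component of $[M]$ to whether $[\lambda_p(M')]$ lies in the component of $[\lambda_p(M)]$ inside $\mathfrak G_{L,p}(m-2)$; by the induction hypothesis the latter is exactly $\text{Cspn}(\lambda_p(M))$. Since $\lambda_p$ respects spinor equivalence and, by the surjectivity quoted just before the lemma together with $g^{+}(\mathcal G_{L,p}(m))=g^{+}(\mathcal G_{L,p}(m-2))$, induces a bijection on spinor genera, and since $\mathfrak G_{L,p}(m)$ and $\mathfrak G_{L,p}(m-2)$ share the same $O$/$E$-type by definition, the operation $\text{Cspn}(\cdot)$ commutes with $\lambda_p$; hence $\lambda_p(M')\in\text{Cspn}(\lambda_p(M))$ is equivalent to $M'\in\text{Cspn}(M)$, closing the induction.

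The main obstacle is the $m=1$ base case: although the strategy mirrors $m=0$, the strong-approximation argument must be re-run in the scaled space $V^p$, and one must verify that the two sublattices $\Gamma_{p,1}(S)^{1/p},\Gamma_{p,2}(S)^{1/p}$ produced by an edge $[S]\in\mathcal G_{L,p}(2)$ indeed lie in a common spinor genus of $\mathcal G_{L,p}(1)$. A subsidiary bookkeeping point in the inductive step is to check, in the $E$-type case, that $\lambda_p$ matches the auxiliary spinor genus $\text{spn}(M')$ appearing in $\text{Cspn}(M)$ with the corresponding one in $\text{Cspn}(\lambda_p(M))$; this follows from the bijectivity of $\lambda_p$ on spinor genera together with the characterization of that auxiliary spinor genus as ``the unique other spinor genus in the same connected component''.
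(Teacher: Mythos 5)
Your proposal is correct, and its skeleton --- base cases at $m\le 1$ followed by the reduction of connectivity in $\mathfrak G_{L,p}(m)$ to connectivity in $\mathfrak G_{L,p}(m-2)$ via Lemma~\ref{connect}, combined with the bijection that $\lambda_p$ induces on spinor genera (surjectivity plus $g^{+}(\mathcal G_{L,p}(m))=g^{+}(\mathcal G_{L,p}(m-2))$) --- is the same as the paper's. Where you genuinely diverge is the base case $m=1$, which is also where the paper concentrates its effort. You propose to re-run the strong-approximation argument of Theorem~\ref{equicon} in the scaled space and to check by a local spinor-norm computation that adjacent classes of $\mathfrak G_{L,p}(1)$ are spinor equivalent. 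The paper never touches strong approximation again: for spinor-equivalent $M,M'\in\mathcal G_{L,p}(1)$ it observes that the global map $\sigma\Sigma$ carrying $M$ to $M'$ must permute the two distinguished sublattices $\Gamma_{p,i}(\,\cdot\,)$, so the endpoint \emph{vertices} of the edges $[M]$ and $[M']$ in $\mathfrak G_{L,p}(0)$ are spinor equivalent and hence joined by a path there, which Lemma~\ref{n=2} lifts to a path joining $[M]$ and $[M']$ in $\mathfrak G_{L,p}(1)$; the reverse containment is then a pure counting argument, since the number of components of $\mathfrak G_{L,p}(1)$ is at most that of $\mathfrak G_{L,p}(0)$, which equals $g^{+}(\mathcal G_{L,p}(1))$ by Theorem~\ref{equicon}. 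The paper's route buys economy: the two verifications you flag as obstacles (that the two lattices $\Gamma_{p,i}(S)^{\frac1p}$ attached to an edge $S\in\mathcal G_{L,p}(2)$ lie in one spinor genus, and that the chain produced by strong approximation really consists of edges) are exactly what Lemma~\ref{n=2} and the counting step absorb. Your route is workable and self-contained, but two details should be adjusted: since $p\in\theta(O^{+}(M_p))$ for $M\in\mathcal G_{L,p}(1)$ (the $p$-modular component $\langle\epsilon p\rangle$ contributes $p$ to the local spinor norm group), the parity of the exponent $n$ in $\z(p^{n}x_1)+\z(p^{-n}x_2)+\z x_3$ is irrelevant for $m=1$ and you should not expect to force $n$ even --- this is precisely why $\mathfrak G_{L,p}(1)$ is of $O$-type; and the explicit bases you need come from Lemma~\ref{2-p} and Lemma~\ref{n=2} rather than Lemma~\ref{n=3}.
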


\begin{proof} First, we prove the case when $m=1$. Assume that $M' \in \text{spn}(M)$. Then there are $\sigma \in P_V$ and $\Sigma \in J'_V$ such that $M'=\sigma\Sigma M$ (see \cite{om2}). Since $\Gamma_{p,i}(M)$'s are the only sublattices of $M$ with index $p$ whose norm is $p\z$, we have 
$$
\{\sigma\Sigma(\Gamma_{p,1}(M)^{\frac1p}),\sigma\Sigma(\Gamma_{p,2}(M)^{\frac1p})\}=\{\Gamma_{p,1}(M')^{\frac1p},\Gamma_{p,2}(M')^{\frac1p}\}.
$$
Hence $\Gamma_{p,1}(M)^{\frac1p} \in \spn(\Gamma_{p,1}(M')^{\frac1p}) \cup \spn(\Gamma_{p,2}(M')^{\frac1p})$. Therefore by Lemma \ref{n=2}, $[M']$ and $[M]$ are connected by a path in $\mathfrak G_{L,p}(1)$. Furthermore, as edges of the graph 
 $\mathfrak G_{L,p}(0)$, $[M]$ and $[M']$ are contained in the same connected component. Since the number of connected components in $\mathfrak G_{L,p}(0)$ equals to $g^{+}(\mathcal G_{L,p}(1))$ by Theorem \ref{equicon}, each spinor genus in $\mathcal G_{L,p}(1)$ forms a connected component in $\mathfrak G_{L,p}(1)$. Furthermore, since $g^{+}(\mathcal G_{L,p}(2m+1))=g^{+}(\mathcal G_{L,p}(1))$, $\spn(\lambda_p^{\frac m2}(M))=\spn(\lambda_p^{\frac m2}(M'))$ if and only if $\spn(M)=\spn(M')$ for any $M,M' \in \mathcal G_{L,p}(2m+1)$. Therefore by Lemma \ref{connect},   the set of all vertices in the connected component of $\mathfrak G_{L,p}(m)$ containing $[M]$ is the set of equivalence classes in $\text{Cspn}(M)$
 for any odd $m$. The proof of even case is quite similar to this.   
\end{proof}

\begin{thm} For any non-negative integer $m$, the graph $\gr$ has an odd cycle (including a loop) if and only if $\gr$ is of $O$-type.
\end{thm}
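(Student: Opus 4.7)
My plan is to prove the equivalence by induction on $m$, with $m=0$ and $m=1$ as base cases and Lemma \ref{connect} handling the step $m \ge 2$. For $m=0$ the statement is essentially already established inside the proof of Theorem \ref{equicon}: in the $E$-type case adjacent vertices lie in distinct spinor genera so each component is bipartite and contains no odd cycle, while in the $O$-type case the argument produced there for the implication (1) $\Rightarrow$ (2) constructs, between any pair of adjacent vertices $[T_1], [T_2]$, a path of even length, which together with the connecting edge gives a closed walk of odd length and hence an odd cycle.

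For $m=1$ the graph is of $O$-type by definition, so the task reduces to exhibiting an odd cycle. I would fix any $T \in \mathcal G_{L,p}(0)$ and use $|\Phi_p(T)| = p+1 \ge 3$ to select three distinct lattices $S_1, S_2, S_3 \in \Phi_p(T)$. Lemma \ref{n=2} then produces, for each pair $\{i,j\}$, a lattice $M_{ij} \in \Psi_p(T) \subset \mathcal G_{L,p}(2)$ corresponding to an edge of $\mathfrak G_{L,p}(1)$ with endpoints $\{[S_i],[S_j]\}$. The three edges $[M_{12}], [M_{23}], [M_{13}]$ trace a closed walk of length $3$ in $\mathfrak G_{L,p}(1)$, which in any multigraph forces the existence of an odd cycle (a triangle when the three classes $[S_i]$ are pairwise distinct, and otherwise a loop among the $[M_{ij}]$ or a shorter odd cycle produced by the coincidences).

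For $m \ge 2$ I would apply Lemma \ref{connect} with $M' = M$. Its parity clause says that a closed walk of odd length based at $[M]$ in $\mathfrak G_{L,p}(m)$ exists if and only if a closed walk of odd length based at $[\lambda_p(M)]$ exists in $\mathfrak G_{L,p}(m-2)$; since every vertex of $\mathfrak G_{L,p}(m-2)$ is of the form $[\lambda_p(M)]$ (pick any $M \in \Psi_p(T)$, which is non-empty), the presence of an odd cycle is equivalent in the two graphs. Combined with the fact that $\mathfrak G_{L,p}(m)$ and $\mathfrak G_{L,p}(m-2)$ are of the same type by the definition given just before Lemma \ref{coco}, the inductive hypothesis finishes the proof. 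The main point requiring care will be the $m=1$ argument, where possible coincidences among the classes $[S_i]$ and among the edges $[M_{ij}]$ in the multigraph must be accommodated; the uniform remedy is the elementary fact that every closed walk of odd length in a multigraph (with loops allowed) contains an odd cycle.
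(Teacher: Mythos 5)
Your proposal is correct and follows essentially the same route as the paper: the case $m=0$ is read off from Theorem \ref{equicon}, the case $m=1$ uses $\vert\Phi_p(T)\vert=p+1\ge 3$ together with Lemma \ref{n=2} to produce a triangle or a loop, and the general case is reduced to $m-2$ via the parity clause of Lemma \ref{connect}. Your extra care with multigraph coincidences and with the surjectivity of $\lambda_p$ on vertex classes only makes explicit what the paper leaves implicit.
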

 
\begin{proof} We already proved the case when $m=0$ in Theorem \ref{equicon}.  Assume that $m=1$. Let $T \in \mathcal G_{L,p}(0)$ be any $\z$-lattice. Then there are at least three $\z$-lattices, say $S_1, S_2, S_3$, in $\Phi_p(T) \cap \mathcal G_{L,p}(1)$. Now by Lemma \ref{n=2}, $[S_i]$ and $[S_j]$ are connected by an edge for any $1 \le i\ne j \le 3$. Hence the graph $\mathfrak G_{L,p}(1)$ contains a cycle of length $3$ or a loop. For the general case, we may apply Lemma \ref{connect} to prove the theorem.    
\end{proof}

\section{Representations of integers by ternary quadratic forms}

Throughout this section, we assume that a $\z$-lattice $L$ and a prime $p$ satisfies all conditions given in Section 3. 
For a nonnegative integer $m$, let $T \in \mathcal G_{L,p}(m)$ be a ternary $\z$-lattice and let $S \in \mathcal G_{L,p}(m+1)$ be a ternary $\z$-lattice such that $r(T^p,S) \ne 0$. This implies that $[T]$ is one of vertices   contained in the edge $[S]$ in the graph $\mathfrak G_{L,p}(m)$.  
We assume that    
 \begin{equation}
\text{Cspn}(T)=\{[T_1],[T_2] \dots,[T_u]\} \quad \text{and} \quad \text{Cspn}(S)=\{[S_1],[S_2], \dots,[S_v]\}
\end{equation}
are {\it ordered} sets of equivalence classes. 
The aim of this section is to show that 
if $m \le 2$, then there are rational numbers $a_i$ and $b_i$ such that for any integer $n$ (any integer $n$ divisible by $p$ only when $m=2$),
\begin{equation}
r(n,T)=\sum_{i=1}^v \left(a_{i}r(pn,S_i)+b_{i}r(p^3n,S_i)\right)+(\text{some extra term}).
\end{equation} 
 For a while, we assume that $m$ is an arbitrary nonnegative integer. The following two propositions will be used repeatedly.

\begin{prop}\label{prop1}
For any integer $n$, 
$$
\frac{r(pn, S)}{o(S)}=\sum_{i=1}^{u} \frac{r(T_i^p,S)}{o(S)} \frac {r(n,T_i)}{o(T_i)} - \frac{r(pn,\Lambda_p(S))}{o(S)}.
$$
\end{prop}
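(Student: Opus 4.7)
My plan is to combine an inclusion--exclusion argument for vectors of norm $pn$ in $S$ with a bookkeeping identity for embeddings of $T_i^p$ into $S$. The key geometric input is the explicit structure of the two ``norm $p\z$'' index-$p$ sublattices of $S$ supplied by Lemma \ref{2-p}.

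First, by Weak Approximation (as in the proof of Lemma \ref{2-p}) I fix a basis $\{x_1, x_2, x_3\}$ of $S$ with
$$
(B(x_i, x_j)) \equiv \begin{pmatrix} 0 & \frac12 \\ \frac12 & 0 \end{pmatrix} \perp \langle p^{m+1}\delta\rangle \pmod{p^{m+2}}, \qquad p \nmid \delta,
$$
so that $\Gamma_{p,1}(S) = \z(px_1) + \z x_2 + \z x_3$ and $\Gamma_{p,2}(S) = \z x_1 + \z(px_2) + \z x_3$. A direct computation of the defining condition $2B(v,S) \subset p\z$ identifies
$$
\Lambda_p(S) = \z(px_1) + \z(px_2) + \z x_3 = \Gamma_{p,1}(S) \cap \Gamma_{p,2}(S).
$$
Now for any $v = a_1 x_1 + a_2 x_2 + a_3 x_3 \in S$, reducing $Q(v) = pn$ modulo $p$ gives $a_1 a_2 \equiv 0 \pmod p$, so either $p \mid a_1$ (whence $v \in \Gamma_{p,1}(S)$) or $p \mid a_2$ (whence $v \in \Gamma_{p,2}(S)$). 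Hence every representation of $pn$ by $S$ lies in $\Gamma_{p,1}(S) \cup \Gamma_{p,2}(S)$, and inclusion--exclusion yields
$$
r(pn, \Gamma_{p,1}(S)) + r(pn, \Gamma_{p,2}(S)) = r(pn, S) + r(pn, \Lambda_p(S)). \qquad (\star)
$$

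The remaining step is to identify the left-hand side of $(\star)$ with $\sum_{i=1}^{u} \frac{r(T_i^p,S)}{o(T_i)} r(n,T_i)$. Any sublattice of $S$ isometric to some $T_i^p$ has discriminant $p^3 \, dT_i = p^2 \, dS$, hence index $p$ in $S$; it also has norm $\subset p\z$ since $T_i$ is non-classic. By Lemma \ref{2-p}, such a sublattice must equal $\Gamma_{p,1}(S)$ or $\Gamma_{p,2}(S)$, and by Lemma \ref{coco} both rescalings $\Gamma_{p,j}(S)^{1/p}$ are isometric to some $T_i$ with $[T_i]\in\text{Cspn}(T)$. Sorting the embeddings $\sigma\colon T_i^p \hookrightarrow S$ by image therefore gives
$$
r(T_i^p, S) = o(T_i) \cdot \#\{j \in \{1,2\} : \Gamma_{p,j}(S) \simeq T_i^p \},
$$
and since $r(n, \Gamma_{p,j}(S)^{1/p}) = r(pn, \Gamma_{p,j}(S))$,
$$
\sum_{i=1}^{u} \frac{r(T_i^p, S)}{o(T_i)} r(n, T_i) = r(pn, \Gamma_{p,1}(S)) + r(pn, \Gamma_{p,2}(S)).
$$
Combining this with $(\star)$ and dividing by $o(S)$ yields the claimed identity.

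The main technical hazard is the verification in the middle paragraph that no vector of norm $pn$ escapes both $\Gamma_{p,1}(S)$ and $\Gamma_{p,2}(S)$; once the adapted basis from Lemma \ref{2-p} is chosen this becomes a one-line congruence check, but without such a basis the argument is much harder to organize cleanly.
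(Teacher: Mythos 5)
Your proposal is correct and follows essentially the same route as the paper's proof: the adapted basis from Weak Approximation, the congruence $Q(v)\equiv a_1a_2 \pmod p$ forcing every representation of $pn$ into $\Gamma_{p,1}(S)\cup\Gamma_{p,2}(S)$, inclusion--exclusion via $\Gamma_{p,1}(S)\cap\Gamma_{p,2}(S)=\Lambda_p(S)$, and the identification of the two index-$p$ sublattices of norm $p\z$ with copies of the $T_i^p$. Your final paragraph merely makes explicit the embedding count $\frac{r(T_i^p,S)}{o(T_i)}$ that the paper states without elaboration.
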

\begin{proof}
 By Weak Approximation Theorem, there exists a basis $\{x_1, x_2, x_3 \}$ for $S$ such that
$$
(B(x_i,x_j))\equiv\begin{pmatrix}0&\frac12\\ \frac12&0\end{pmatrix}\perp \langle p^{m+1} \delta\rangle \ (\text{mod} \ p^{m+2}),
$$
where $\delta$ is an integer not divisible by $p$. As in Lemma \ref{2-p}, let 
$$
\aligned
&\Gamma_{p,1}(S) = \z px_1 + \z x_2+ \z x_3, &\Gamma_{p,2}(S) = \z x_1 + \z px_2+ \z x_3. 
\endaligned
$$
 Since $Q(x) \equiv a_1a_2\ (\text{mod}\ p)$ for any $x=a_1x_1+a_2x_2+a_3x_3 \in S$, we have  $Q(x)\equiv0 \ (\text{mod}\ p)$ if and only if $a_1\equiv0 \ (\text{mod}\ p)$ or $a_2\equiv0 \ (\text{mod} \ p)$. Hence
$$
x \in R(pn,S) \quad \text{if and only if} \quad x \in R(pn,\Gamma_{p,1}(S)) \cup R(pn,\Gamma_{p,2}(S))
$$
Furthermore since $\Gamma_{p,1}(S) \cap \Gamma_{p,2}(S)=\Lambda_p(S)$, we have
$$
r(pn,S)=r(pn, \Gamma_{p,1}(S))+r(pn,\Gamma_{p,2}(S))-r(pn,\Lambda_p(S))
$$ 
for any integer $n$. Note that $\Gamma_{p,1}(S)$ and $\Gamma_{p,2}(S) \in \text{gen}(T^p)$ are the only sublattices of $S$ that are contained in $\gen(T^p)$. Furthermore, since the edge $[S]$ in $\mathfrak G_{L,p}(0)$ contains the vertex $[T]$ by assumption, we have  
$\Gamma_{p,1}(S)^{\frac1p}, \Gamma_{p,2}(S)^{\frac1p} \in \text{Cspn}(T)$. Now for any $\z$-lattice $T_i \in \text{Cspn}(T)$, the number of sublattices in $S$ that are isometric to $T_i^{p}$ is $\frac{r(T_i^{p},S)}{o(T_i)}$. The proposition follows from this.   \end{proof}

\begin{prop} \label{prop2}
For any integer $n$,
$$
\frac {r(pn,T)}{o(T)}=\begin{cases} \displaystyle\sum_{j=1}^{v} \frac{r(S_j^p,T)}{o(T)} \frac{r(n,S_j)}{o(S_j)} -p\cdot \frac{r(n,T^p)}{o(T)} \quad &\text{if $m=0$,} \\
                     \displaystyle \sum_{j=1}^{v} \frac{\tilde{r}(S_j^p,T)}{o(T)}\frac{r(n,S_j)}{o(S_j)} +\frac{r(pn,\Lambda_p(T))}{o(T)}-2p\cdot \frac{r(n,T^p)}{o(T)} \quad &\text{otherwise.} \\  \end{cases}    
$$
\end{prop}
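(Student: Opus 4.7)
The plan is to apply Lemma~\ref{2} to $T$ with $\epsilon = 0$ and argument $pn$ (so that $\left(\frac{pn}{p}\right) = 0$), and then to classify the sublattices of $\Omega_p(0, T)$ using Lemma~\ref{local} (resp.\ Lemma~\ref{local-2} when $p = 2$). Lemma~\ref{2} yields
$$
r(pn, T) = \sum_{M \in \Omega_p(0, T)} r(pn, M) - \bigl(\vert s_p(0, T)\vert - 1\bigr) r(pn, pT),
$$
and two elementary identities will be used throughout: $r(pn, pT) = r(n, T^p)$ by scaling, and $r(pn, M) = r(n, M^{1/p})$ for each $M \in \Omega_p(0, T)$. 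By Lemma~\ref{1}, $\vert s_p(0, T)\vert - 1$ equals $p$ if $m = 0$ and $2p$ if $m \geq 1$, producing the coefficients in front of $r(n, T^p)$ in both cases.

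For $m = 0$, Lemma~\ref{local} forces every $M \in \Omega_p(0, T)$ to satisfy $M^{1/p} \in \mathcal G_{L, p}(1)$; conversely, by Lemma~\ref{omit}, every isometric embedding of $S_j^p$ into $T$ has cokernel $\z/p\z \oplus \z/p\z$ and therefore lies in $\Omega_p(0, T)$. Hence the number of sublattices $M \subset T$ with $M \simeq S_j^p$ equals $r(S_j^p, T)/o(S_j)$, and grouping $\sum_M r(n, M^{1/p})$ by the isometry class of $M^{1/p}$ (observing via Lemma~\ref{n=2} that any class contributing non-trivially lies in $\text{Cspn}(S)$) gives the first case of the formula after dividing by $o(T)$.

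For $m \geq 1$, Lemma~\ref{local} splits $\Omega_p(0, T)$ into two local types: a singleton $M_0$ with $M_0^{1/p} \in \mathcal G_{L, p}(m-1)$, and $2p$ sublattices $M$ with $M^{1/p} \in \mathcal G_{L, p}(m+1)$. Working in a basis $\{x_1, x_2, x_3\}$ of $T$ satisfying the congruence of Lemma~\ref{1}, a direct computation from the definition of the Watson transform shows
$$
\Lambda_p(T) = \z(px_1) + \z(px_2) + \z x_3,
$$
which is exactly the sublattice $L_{(0, 0, 1)}$ identified as the first type in the proof of Lemma~\ref{local}; thus $M_0 = \Lambda_p(T)$, and its contribution to the sum is $r(pn, \Lambda_p(T))$. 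The remaining $2p$ sublattices are precisely those $M \subset T$ with $M \simeq S_j^p$ for some $S_j \in \mathcal G_{L, p}(m+1)$ and $T/M \simeq \z/p\z \oplus \z/p\z$, and their count is $\tilde{r}(S_j^p, T)/o(S_j)$. Since, by Lemma~\ref{n=3}, the classes of $\Phi_p(T)$ form a complete bipartite graph in $\mathfrak G_{L, p}(m+1)$ with respect to the fibering $\lambda_p$, every such $[S_j]$ lies in $\text{Cspn}(S)$, so the second formula follows after dividing by $o(T)$.

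The main technical point is the case $m \geq 1$: one must (i) identify $\Lambda_p(T)$ as the unique first-type sublattice via the local computation above, (ii) justify the use of $\tilde{r}(S_j^p, T)$ rather than $r(S_j^p, T)$ in order to exclude embeddings with cyclic cokernel $\z/p^2\z$, and (iii) verify that every edge at $[T]$ in $\mathfrak G_{L, p}(m)$ belongs to $\text{Cspn}(S)$, so that the stated sum over $j$ is exhaustive. The case $p = 2$ is handled identically with Lemma~\ref{local-2} in place of Lemma~\ref{local}.
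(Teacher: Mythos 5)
Your proof is correct and follows essentially the same route as the paper's: apply Lemma \ref{2} with $\epsilon=0$ at $pn$, classify $\Omega_p(0,T)$ via Lemmas \ref{local} and \ref{local-2}, identify the single exceptional sublattice with $\Lambda_p(T)$ when $m\ge 1$, and count the remaining members of $\Omega_p(0,T)$ in each isometry class by $\tilde{r}(S_j^p,T)/o(S_j)$ (resp. $r(S_j^p,T)/o(S_j)$ when $m=0$, justified by Lemma \ref{omit}), with the connectivity argument placing all contributing classes in $\text{Cspn}(S)$. One harmless slip: $\Lambda_p(T)^{1/p}$ does not lie in $\mathcal G_{L,p}(m-1)$, since its binary Jordan component is $p\z_p$-modular rather than $\tfrac12\z_p$-modular; as only the unscaled quantity $r(pn,\Lambda_p(T))$ enters the formula, this mislabeling does not affect the argument.
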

\begin{proof}

If we take $\epsilon = 0$ and $L=T$ in Lemma \ref{2}, then we have 
$$
r(pn,T)=\sum_{M\in\Omega_p(0, T)}r(pn,M)-(s_p(0,T)-1)r(n,T^p).
$$
First, assume that $m=0$.  Let $M\in \Omega_p(0,T)$ be a $\z$-lattice. Then by Lemmas \ref{local} and \ref{local-2}, 
$$
M_p\simeq \begin{pmatrix} 0&\frac p2\\ \frac p2&0 \end{pmatrix}\perp  \langle -4p^2dT\rangle \quad \text{and}  \quad  M_q\simeq T_q ~(q \ne p).
$$ 
Hence $M\in\gen(S^p)$. Furthermore, since $r(T^p,M^{\frac1p})=\tilde {r}(M,T)\ne 0$ and $r(T^p,S)=\tilde{r}(S^p,T) \ne 0$ by 
Lemma \ref{tilde r}, $M^{\frac1p} \in \text{Cspn}(S)$ by Lemmas \ref{n=2} and \ref{coco}.  
Conversely, if $M^{\frac1p} \in \text{Cspn}(S)$ satisfies $\tilde{r}(M,T) \ne 0$, then $M$ is isometric to a $\z$-lattice in $\Omega_p(0,T)$. Note that the number of lattices in $\Omega_p(0,T)$ that are isometric to $S^p$ is $\frac {r(S^p,T)}{o(S)}$ and  $s_p(0,T)=p+1$. The proof of the case when $m \ge 1$ is quite similar to this, except that there is a unique $\z$-lattice in $\Omega_p(0,T)$ that is not contained in $\gen(S^p)$, which is, in fact, $\Lambda_p(T)$, and $s_p(0,T)=2p+1$.    
\end{proof}

We define
$$
\mathcal M_{L,p}(m)= \left(\frac{r(T_i^p, S_j)}{o(T_i)}\right) \in M_{u,v}(\z) \ \ \text{and} \ \ \mathcal N_{L,p}(m)=\left(\frac{r(T_i^p, S_j)}{o(S_j)}\right) \in M_{u,v}(\z).
$$
Note that these two matrices depend on the order of each set $\text{Cspn}(\cdot)$, and $\mathcal M_{L,p}(0)$ is one of block diagonal components of $\mathfrak M_{L,p}(0)$ if we take a suitable order in (3.1).
 For any integer $n$, we define  vectors
$$
\mathbf R(n,\text{Cspn}(T)) =\displaystyle\left(\frac{r(n,T_1)}{o(T_1)},\displaystyle \frac{r(n,T_2)}{o(T_2)},\dots,\displaystyle\frac{r(n,T_u)}{o(T_u)}\right)^t,
$$
 $$
  \mathbf{R}^{\sharp}(n, \text{Cspn}( \lambda_p^m(T))) = 
  \displaystyle \left(\frac{r(n,\lambda_p^m(T_1))}{o(T_1)},\frac{r(n,\lambda_p^m(T_2))}{o(T_2)},\dots,\frac{r(n,\lambda_p^m(T_u))}{o(T_u)}\right)^t.
$$
 Similarly, we define $\mathbf R(n,\text{Cspn}(S))$ and  $\mathbf{R}^{\sharp}(n, \text{Cspn}( \lambda_p^m(S)))$. 
 If $\text{Cspn}(M)=\spn(M)$, then we use $\mathbf R(n,\spn(M))$ rather than $\mathbf R(n,\text{Cspn}(M))$. 
\begin{thm} \label{rel0O} Let $T$ and $S$ be ternary $\z$-lattices satisfying all conditions given above when $m=0$. If the graph $\mathfrak G_{L,p}(0)$ is of $O$-type, then
we have
$$
p \mathbf R(n,\text{spn}(T^p))=\mathcal{M}\cdot \mathbf R(n,\text{spn}(S))-(\mathcal{M}\cdot \mathcal{N}^t)^{-1}\mathcal{M}\cdot (\mathbf R(p^2n,\text{spn}(S))+\mathbf R(n,\text{spn}(S))).
$$
\end{thm}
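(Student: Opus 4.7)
The plan is to combine Propositions~\ref{prop1} and~\ref{prop2} in the case $m=0$ with one crucial local identity, and then to invert $\mathcal M\mathcal N^t$ using Theorem~\ref{equicon}. Writing Proposition~\ref{prop2} for each pair $T_i\in\spn(T)$, $S_j\in\spn(S)$ and using $r(S_j^p,T_i)=r(T_i^p,S_j)$ (Lemma~\ref{omit}) gives, in matrix form,
\begin{equation*}
\mathcal M\,\mathbf R(n,\spn(S))=\mathbf R(pn,\spn(T))+p\,\mathbf R(n,\spn(T^p)),\qquad\text{(A)}
\end{equation*}
so solving (A) for $p\,\mathbf R(n,\spn(T^p))$ reduces the theorem to the identity
$\mathbf R(pn,\spn(T))=(\mathcal M\mathcal N^t)^{-1}\mathcal M\bigl(\mathbf R(p^2n,\spn(S))+\mathbf R(n,\spn(S))\bigr)$.

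The heart of the proof is the \emph{local identity} $r(p^2n,\Lambda_p(S_j))=r(n,S_j)$ for every $S_j\in\spn(S)$ and every integer $n$. To prove it I would use the Weak Approximation Theorem to pick a basis $\{x_1,x_2,x_3\}$ of $S_j$ satisfying
$(B(x_i,x_j))\equiv\bigl(\begin{smallmatrix}0&\tfrac12\\ \tfrac12&0\end{smallmatrix}\bigr)\perp\langle p\delta\rangle\pmod{p^2}$
for some $\delta\in\z_p^\times$; then $\Lambda_p(S_j)=\z(px_1)+\z(px_2)+\z x_3$. A direct computation modulo $p^3$ (using that $Q(x_1),Q(x_2)\in p^2\z_p$ and $2B(x_1,x_3),2B(x_2,x_3)\in p^2\z_p$ from the choice of basis) yields $Q(apx_1+bpx_2+cx_3)\equiv abp^2+c^2p\delta\pmod{p^3}$. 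Imposing $Q(x)=p^2n$ and reducing mod $p^2$ forces $c^2\delta\equiv 0\pmod p$, hence $c\equiv 0\pmod p$. Writing $x=py$ with $y\in S_j$ then yields $Q(y)=n$; combined with the automatic inclusion $pS_j\subset\Lambda_p(S_j)$, the assignment $y\mapsto py$ is a bijection $R(n,S_j)\leftrightarrow R(p^2n,\Lambda_p(S_j))$.

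Given this local identity, Proposition~\ref{prop1} applied with $n$ replaced by $pn$ reads
$\mathbf R(p^2n,\spn(S))=\mathcal N^t\mathbf R(pn,\spn(T))-\mathbf R(n,\spn(S))$,
so
\begin{equation*}
\mathcal N^t\,\mathbf R(pn,\spn(T))=\mathbf R(p^2n,\spn(S))+\mathbf R(n,\spn(S)).
\end{equation*}
Theorem~\ref{equicon} together with Proposition~\ref{Eichler-rel} guarantees that in the $O$-type case $\mathcal M\mathcal N^t$, viewed on a single spinor-genus block of $\pi_p(T)+(p+1)I$, is invertible. Multiplying the displayed equation by $\mathcal M$, inverting, and substituting the result back into~(A) delivers the stated formula. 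The main obstacle is the local identity; once it is in hand, the rest is linear algebra using tools already developed in Sections~2 and~3.
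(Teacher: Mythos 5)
Your proposal is correct and takes essentially the same route as the paper's proof: your equation (A) and the $n\mapsto pn$ shift of Proposition~\ref{prop1} are precisely the paper's equations (4.4) and (4.5), and the concluding inversion of $\mathcal M\cdot\mathcal N^t$ (which the paper justifies by $\text{rank}(\mathcal M)=u$, equivalent to your appeal to Theorem~\ref{equicon} and Proposition~\ref{Eichler-rel}) is identical. The only difference is cosmetic: you verify the key identity $r(p^2n,\Lambda_p(S_j))=r(n,S_j)$ by a direct basis computation, whereas the paper deduces it from $\lambda_p(\lambda_p(S_j))\simeq S_j$ together with the classical Watson relation (2.1); both arguments are sound.
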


\begin{proof} By Lemma \ref{omit} and Propositions \ref{prop1}, \ref{prop2}, we have the following two equalities:
\begin{eqnarray} 
\mathbf R(pn,\text{spn}(S))=\mathcal {N}^t \cdot\mathbf R(n,\text{spn}(T))-\mathbf R^{\sharp}(pn, \text{spn}(\Lambda_p(S))),  \\
\mathbf R(pn,\text{spn}(T))=\mathcal{M}\cdot \mathbf R(n,\text{spn}(S))-p\mathbf R(n,\text{spn}(T^p)).
\end{eqnarray}
Since $\lambda_p(\lambda_p(S_i)) \simeq S_i$ for any $S_i \in \text{spn}(S)$, we have 
$$
\mathbf R^{\sharp}(p^2n,\text{spn}(\Lambda_p(S)))=\mathbf R(n,\text{spn}(S)).
$$ 
Hence 
\begin{equation}
\mathbf R(p^2n,\text{spn}(S))=\mathcal N^t\cdot \mathbf R(pn,\text{spn}(T))-\mathbf R(n,\text{spn}(S)).
\end{equation}
Note that 
$$
\mathbf O(\text{spn}(T))\cdot \mathcal N=\mathcal M \cdot \mathbf O(\text{spn}(S)), 
$$
where $\mathbf O(\text{spn}(T))$ is the $u \times u$ diagonal matrix with entries $o(T_i)^{-1}$.
 Furthermore, since we are assuming that $\text{rank}(\mathcal{M}) = u$, the $u\times u$ square matrix $\mathcal M\cdot \mathcal N^t$ is invertible. Therefore the equation follows directly from (4.4) and (4.5).
\end{proof}

Now assume that $\mathfrak{G}_{L,p}(0)$ is of $E$-type, then $\text{Cspn}(T)$ consists of two spinor genera and each connected component is a bipartite graph. Hence the rank of the matrix $\mathcal{M}$ is $u-1$ and $\mathcal{M}\cdot \mathcal{N}^t$ is no longer invertible. To get a similar result for an $E$-type graph, we need to make some adjustments.
		
Assume that $\text{Cspn}(T)=\spn(T) \cup \spn(\tilde{T})$ and 
$$
\spn(T)= \{[ T_{i_1}] , \ldots , [T_{i_{a}}] \}, \quad  \spn(\tilde{T}) = \{ [T_{j_1}] , \ldots, [T_{j_{b}}] \},
$$
where $\{i_1,i_2,\dots,i_a,j_1,\dots, j_b\}=\{1,2,\dots,u\}$.	Note that
$$
 w(\spn(T'))=\sum_{[K]\in \spn(T')} \frac{1}{o(K)},
 $$
is independent of $T'$ for any $T' \in \gen(T)$. Define 
$$\epsilon_{l} = \begin{cases}
	 w(\spn(T))^{-1} \quad &\text{if } l \in \{ i_1 , \ldots , i_a\}, \\ -w(\spn(T))^{-1} \quad &\text{if } l \in  \{ j_1 , \ldots , j_b \}, \end{cases} 
$$
 and define a $u \times (v+1)$ matrix $ \mathcal{\tilde{N}}= (n_{ij})$ by
$$ 
n_{ij} =\begin{cases}
	\displaystyle \frac{r(T_i ^p , S_j )}{o(S_j)}\quad &\text{if } j \le v, \\
	\epsilon_i \quad &\text{if } j= v+1. \end{cases}  
$$ 	
\begin{lem}\label{rankN}
The rank of the matrix $\mathcal{\tilde{N}}$ defined above is $u$. 
\end{lem}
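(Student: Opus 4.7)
The plan is to show that the $(v+1)$-th column $\mathbf{w} := (\epsilon_1,\ldots,\epsilon_u)^t$ of $\mathcal{\tilde{N}}$ is not in the $\mathbb{Q}$-span of the first $v$ columns, which together form $\mathcal{N}$. In the $E$-type setting (implicit from the decomposition $\text{Cspn}(T) = \spn(T) \cup \spn(\tilde T)$), each connected component of $\mathfrak{G}_{L,p}(m)$ is bipartite with the two spinor genera as color classes, by Lemma \ref{coco} and the analog of Theorem \ref{equicon} for general even $m$. Since the incidence matrix of a connected bipartite graph on $u$ vertices has $\mathbb{Q}$-rank $u-1$, and $\mathcal{M}$ restricted to our connected component is precisely such an incidence matrix, we get $\text{rank}(\mathcal{M}) = u-1$; transporting through the diagonal identity $\mathbf{D}_T\mathcal{M} = \mathcal{N}\mathbf{D}_S$, where $\mathbf{D}_T = \diag(o(T_i))$ and $\mathbf{D}_S = \diag(o(S_j))$, we also obtain $\text{rank}(\mathcal{N}) = u-1$. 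Adding a single column can raise the rank by at most one, so it suffices to exhibit one left-nullspace vector of $\mathcal{N}$ whose pairing with $\mathbf{w}$ is nonzero.

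Next, I would locate the left nullspace of $\mathcal{M}$ explicitly. Define $\mathbf{e}\in\mathbb{Q}^u$ by $e_l=1$ if $[T_l]\in\spn(T)$ and $e_l=-1$ if $[T_l]\in\spn(\tilde T)$. The bipartite structure (no loops in the $E$-type case) combined with the column-sum identity $\sum_i \mathcal{M}_{i,j}=2$ from Lemma \ref{2-p} forces each column of $\mathcal{M}$ to have exactly two nonzero entries, both equal to $1$ and lying in opposite color classes. Hence $\mathbf{e}^t\mathcal{M}=0$, and by dimension count $\mathbf{e}$ spans the left nullspace. Consequently the left nullspace of $\mathcal{N}$ is spanned by $\mathbf{u}:=\mathbf{D}_T^{-1}\mathbf{e}$, whose $l$-th entry is $\pm 1/o(T_l)$ with sign dictated by the spinor-genus membership of $T_l$.

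Finally, I would compute $\langle \mathbf{u}, \mathbf{w}\rangle$. Splitting the sum by spinor genus and invoking the standard fact $w(\spn(T))=w(\spn(\tilde T))$ (equal weights for spinor genera in a single genus, noted just before the statement of the lemma),
$$
\langle \mathbf{u}, \mathbf{w}\rangle = \frac{1}{w(\spn(T))}\sum_{[T_l]\in\spn(T)}\frac{1}{o(T_l)} + \frac{1}{w(\spn(T))}\sum_{[T_l]\in\spn(\tilde T)}\frac{1}{o(T_l)} = \frac{w(\spn(T))+w(\spn(\tilde T))}{w(\spn(T))} = 2 \ne 0.
$$
Therefore $\mathbf{w}$ lies outside the column space of $\mathcal{N}$, and $\text{rank}(\mathcal{\tilde{N}}) = u$.

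The step I expect to be the main obstacle is not the linear algebra, which is quite clean, but rather the verification that in the $E$-type setting each connected component of $\mathfrak{G}_{L,p}(m)$ is genuinely bipartite with $\spn(T)$ and $\spn(\tilde T)$ as color classes. For $m=0$ this is part of Theorem \ref{equicon}; for general even $m$ it should follow from Lemma \ref{connect} together with the parity-preserving construction of paths there, but one must carefully rule out odd cycles crossing between spinor genera at higher depth.
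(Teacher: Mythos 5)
Your proof is correct, and it is built from exactly the same three ingredients as the paper's argument --- connectivity of the component on $\text{Cspn}(T)$ (Lemma \ref{coco}), bipartiteness with the two spinor genera as colour classes ($E$-type, Section 3), and the column-sum identity $\sum_i \mathcal M_{ij}=2$ from Lemma \ref{2-p} --- but you package them dually. The paper shows directly that the \emph{rows} of $\tilde{\mathcal N}$ are independent: a hypothetical dependence $\sum_l \alpha_l \mathbf n_l=0$ forces, edge by edge, $\alpha_{i_e}\alpha_{j_f}\le 0$, connectivity propagates a global alternating sign pattern across the bipartition, and the extra coordinate $\sum_l\alpha_l\epsilon_l=0$ then annihilates a sum of like-signed terms. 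You instead quote the rank-$(u-1)$ fact for incidence matrices of connected bipartite graphs, transport it to $\mathcal N$ through the diagonal identity (which the paper records as $\mathbf O(\spn(T))\cdot\mathcal N=\mathcal M\cdot\mathbf O(\spn(S))$), identify the one-dimensional left kernel of $\mathcal N$ explicitly as the weighted alternating vector, and check that it pairs to $2\ne 0$ against the appended column. The two arguments are equivalent in substance --- your kernel vector is precisely the sign pattern the paper's propagation step produces --- but yours isolates the kernel once and for all, which is a little cleaner and makes visible that the final pairing is a sum of positive terms, so you do not even need $w(\spn(T))=w(\spn(\tilde T))$ for the nonvanishing. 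One remark on your closing caveat: the lemma as stated (and as proved in the paper) lives at $m=0$, where bipartiteness of each connected component in the $E$-type case is exactly the fact from \cite{bh} recorded in Section 3, so the obstacle you anticipate does not arise here; the reuse of the lemma at $m=2$ is asserted separately by the paper and would indeed require the parity bookkeeping of Lemma \ref{connect} that you mention.
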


\begin{proof}
Let $\mathbf n_i$ be the $i$-th row vector of the matrix $ \mathcal{\tilde{N}}$. Suppose that $\alpha_1 \mathbf n_1 + \cdots +\alpha_u \mathbf n_u= 0$ for some integers $\alpha_i$, that is, 
\begin{equation} 
\begin{cases}
&\alpha_1 \displaystyle \frac{r(T_1 ^p, S_j)}{o(S_j)} + \cdots +\alpha_u \displaystyle \frac{r(T_u ^p, S_j)}{o(S_j)} = 0 \ \  \text{for any} \ j=1,\dots,v, \\
&\alpha_1 \epsilon_1 + \cdots + \alpha_u \epsilon_u = 0.  
\end{cases}
\end{equation}
For any $j$ such that $1\le j \le v$, the edge named by $[S_j]$ contains two vertices, one of them, say $[T_{i_e}]$, is contained in $\spn(T)$ and 
the other, say $[T_{j_f}]$, is contained in $\spn(\tilde{T})$. Hence the first equation in $(4.6)$ implies that 
$$
 \alpha_{i_e} \frac{r(T_{i_e} ^p, S_j)}{o(S_j)} + \alpha_{j_f} \frac{r(T_{j_f} ^p, S_j)}{o(S_j)} = 0. 
$$
Therefore $\alpha_{i_e}\cdot\alpha_{j_f} \le 0$. Since the subgraph of $\mathfrak G_{L,p}(0)$ consisting of vertices in $\text{Cspn}(T)$ is a connected bipartite graph, each $\alpha_{i_e}$ ($\alpha_{j_f}$) is 0, or it has the same sign to $\alpha_{i_1}$ ($\alpha_{j_1}$, respectively). Therefore $\alpha_ l =0$ for any $l = 1,\ldots, u$ and $\text{rank}( \mathcal{\tilde{N}}) = u$. This completes the proof.
\end{proof}

For a vector $\mathbf v=(v_1,\dots,v_n)$, we define $(\mathbf v,w_1,\dots,w_s)=(v_1,\dots,v_n,w_1,\dots,w_s)$. 
Note that the equation $(4.5)$ implies that
\begin{equation}
\widetilde{\mathbf R}:={\mathcal{\tilde{N}}}^t \cdot \mathbf{R}(pn, \text{Cspn}(T)) = \left( \begin{array}{c} \\ \mathbf R(p^2n,\text{spn}(S))+\mathbf R(n,\text{spn}(S)) \\  \\ r(pn, \spn(T)) - r(pn,\spn(\tilde{T})) \end{array}\right),
\end{equation}
where 
$$ r(pn, \spn(T)) =\frac 1{w(\spn(T))} \cdot \sum_{[T_i] \in \spn(T)} \frac{r(pn,T_i)}{o(T_i)}. $$

\begin{thm} \label{rel0E} If  $\mathfrak G_{L,p}(0)$ is of $E$-type, then we have
$$p\mathbf{R}(n, \text{Cspn}(T^p)) = \mathcal{M}\cdot \mathbf{R}(n,\text{spn}(S))-(\tilde{\mathcal{N}}\cdot \tilde{\mathcal{N}}^t )^{-1}\tilde{\mathcal{N}}\cdot \widetilde{\mathbf{R}}.
$$
\end{thm}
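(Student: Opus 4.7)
My plan is to mirror the proof of Theorem~\ref{rel0O}, adjusting only the final linear-algebra step to handle the fact that $\mathcal{M}\cdot\mathcal{N}^t$ is now rank-deficient.  Writing Propositions~\ref{prop1} and~\ref{prop2} in matrix form---with the sums in both ranging exactly over $\text{Cspn}(T)$ and over $\text{Cspn}(S)=\spn(S)$ by Lemma~\ref{coco}---produces the $E$-type analogues of $(4.3)$ and $(4.4)$:
\begin{equation*}
\mathbf R(pn,\spn(S)) = \mathcal{N}^t\cdot \mathbf R(n,\text{Cspn}(T)) - \mathbf R^{\sharp}(pn,\spn(\Lambda_p(S))),
\end{equation*}
\begin{equation*}
\mathbf R(pn,\text{Cspn}(T)) = \mathcal{M}\cdot \mathbf R(n,\spn(S)) - p\,\mathbf R(n,\text{Cspn}(T^p)).
\end{equation*}
Replacing $n$ by $pn$ in the first identity and using $\lambda_p(\lambda_p(S_i))\simeq S_i$ for every $S_i\in\spn(S)$ yields
\begin{equation*}
\mathcal{N}^t\cdot \mathbf R(pn,\text{Cspn}(T)) = \mathbf R(p^2n,\spn(S)) + \mathbf R(n,\spn(S)),
\end{equation*}
which supplies the first $v$ coordinates of $\widetilde{\mathbf R}$.

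To verify the final coordinate of $\widetilde{\mathbf R}$ displayed in $(4.7)$, I expand the $(v+1)$-th entry of $\tilde{\mathcal{N}}^t\cdot\mathbf R(pn,\text{Cspn}(T))$ as $\sum_l \epsilon_l r(pn,T_l)/o(T_l)$.  Since $w(\spn(\tilde T))=w(\spn(T))$ (the two spinor genera lie in a common genus), this collapses to exactly $r(pn,\spn(T))-r(pn,\spn(\tilde T))$.  Combining with the previous display yields the single identity $\tilde{\mathcal{N}}^t\cdot \mathbf R(pn,\text{Cspn}(T)) = \widetilde{\mathbf R}$.

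The crucial step is to invert this overdetermined system.  By Lemma~\ref{rankN}, $\tilde{\mathcal{N}}$ is a $u\times(v+1)$ matrix of rank $u$, so the symmetric Gram matrix $\tilde{\mathcal{N}}\cdot\tilde{\mathcal{N}}^t$ is positive definite, and in particular invertible.  Multiplying the identity on the left by $\tilde{\mathcal{N}}$ and then by $(\tilde{\mathcal{N}}\tilde{\mathcal{N}}^t)^{-1}$ gives
\begin{equation*}
\mathbf R(pn,\text{Cspn}(T)) = (\tilde{\mathcal{N}}\tilde{\mathcal{N}}^t)^{-1}\tilde{\mathcal{N}}\cdot\widetilde{\mathbf R}.
\end{equation*}
Substituting into the second displayed matrix identity and solving for $p\,\mathbf R(n,\text{Cspn}(T^p))$ produces the formula claimed in the theorem.

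The main conceptual obstacle is precisely the rank drop forced by the $E$-type hypothesis: the subgraph on $\text{Cspn}(T)$ is a connected bipartite multi-graph, so $\mathcal M\cdot\mathcal N^t$ has rank only $u-1$ by Theorem~\ref{equicon}.  The augmented column $(\epsilon_l)_{1\le l\le u}$ of $\tilde{\mathcal{N}}$ restores full row rank, and the extra entry $r(pn,\spn(T))-r(pn,\spn(\tilde T))$ of $\widetilde{\mathbf R}$ supplies exactly the spinor-distinguishing datum---computable, via the spinor analogue of the Minkowski-Siegel formula cited in the introduction, from local densities up to spinor-exceptional corrections---that the paired quantities $\mathbf R(p^2n,\spn(S))$ and $\mathbf R(n,\spn(S))$ cannot by themselves detect.
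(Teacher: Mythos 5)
Your proposal is correct and follows essentially the same route as the paper: the paper's own proof consists of citing Lemma \ref{rankN} for $\operatorname{rank}(\tilde{\mathcal{N}})=u$ and then combining equations $(4.4)$ and $(4.7)$, which is exactly the chain you spell out (invertibility of $\tilde{\mathcal{N}}\tilde{\mathcal{N}}^t$, solving $\tilde{\mathcal{N}}^t\cdot\mathbf{R}(pn,\text{Cspn}(T))=\widetilde{\mathbf{R}}$ for $\mathbf{R}(pn,\text{Cspn}(T))$, and substituting into the $E$-type version of $(4.4)$). Your verification of the last coordinate of $\widetilde{\mathbf{R}}$ via $w(\spn(T))=w(\spn(\tilde T))$ is a detail the paper leaves implicit, but it matches the paper's definitions exactly.
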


\begin{proof}
From the above lemma, we know that $\text{rank}(\tilde{\mathcal{N}})=u$. The theorem follows directly from the equations (4.4) and (4.7).
\end{proof}

Note that $ r(pn, \spn(T)) - r(pn,\spn(\tilde{T})) $ can easily be computed by the formula given in \cite{sp3}. 
\begin{exam}
Let  $p=11$ and $L=\langle1, 1, 16\rangle$. Then
$$
\aligned
&\mathcal G_{L,p}(0)/\sim =\Bigg\{ T_1=\begin{pmatrix}1&0&0\\0&1&0\\0&0&16\end{pmatrix},~ T_2= \begin{pmatrix} 2&0&-1\\0&2&1\\-1&1&5\end{pmatrix}\Bigg\},\\
&\mathcal G_{L,p}(1)/\sim =\Bigg\{ S_1=\begin{pmatrix}3&1&1\\1&6&-1\\1&-1&11\end{pmatrix},~ S_2 =\begin{pmatrix} 6&2&3\\2&6&1\\3&1&7\end{pmatrix}\Bigg\}.
\endaligned
$$ 
 One may easily compute that $\mathcal{M}=\begin{pmatrix}1&1\\1&1\end{pmatrix}$ and $ \mathcal{N}=\begin{pmatrix}8&4\\8&4\end{pmatrix}$.  Since $\text{rank}(\mathcal{M})=1$, the graph $\mathfrak G_{L,p}(0)$ is of $E$-type by Theorem  \ref{equicon}. Note that $\tilde{\mathcal{N}} = \begin{pmatrix}8&4&16\\ 8&4&-16\end{pmatrix}$.
Therefore, by Theorem \ref{rel0E}, we have
$$
\aligned
11r(n, T_1^{11}) &=\frac{38}{5}r(n, S_1) - \frac{2}{5}r(11^2n, S_1) + \frac{39}{10}r(n, S_2) - \frac{1}{10}r(11^2n, S_2) \\&- \left(\frac{1}{2}r(11n, T_1) - \frac{1}{2}r(11n, T_2)\right), \\
11r(n, T_2^{11}) &=\frac{38}{5}r(n, S_1) - \frac{2}{5}r(11^2n, S_1) + \frac{39}{10}r(n, S_2) - \frac{1}{10}r(11^2n, S_2) \\&+ \left(\frac{1}{2}r(11n, T_1) - \frac{1}{2}r(11n, T_2)\right).
\endaligned
$$

Note that by  Korollar 2 of \cite{sp3}, one may easily check that 
$$
r(11n, T_1) - r(11n, T_2) = \begin{cases} 0 \quad &\text{if $n \neq 11m^2$,} \\ \displaystyle \left(\frac{1-(-1)^m}{2}\right)\cdot(-1)^{\frac{m+1}{2}}\cdot44m \quad &\text{if $n = 11m^2$.}  \end{cases} 
$$
\end{exam}


\begin{thm}
Let  $T \in \mathcal G_{L,p}(1)$  and $S \in \mathcal G_{L,p}(2)$ be ternary $\z$-lattices satisfying $r(T^p,S) \ne 0$. Then we have
$$ 
\aligned & (3p^2 -p) \cdot r(n,T)=\sum_{[\tilde{S}] \in \gen(S)} \frac{\tilde{r}(\tilde{S}^p,T)}{o(\tilde{S})} \left(\frac{3p}2~r(pn, \tilde{S}) - \frac p{p-1}~r(p^3n, \tilde{S})\right) \\ &+ \frac 1{p-1} \left( o(\Gamma_{p,1}(T)) \sum_{\substack{[\tilde{S}] \in \gen(S) \\ \lambda_p(\tilde{S}) \simeq \Gamma_{p,1}(T)^{\frac1p}}} \frac{r(p^3 n,\tilde{S})}{o(\tilde{S})} + o(\Gamma_{p,2}(T)) \sum_{\substack{[\tilde{S}] \in \gen(S) \\ \lambda_p(\tilde{S}) \simeq\Gamma_{p,2}(T)^{\frac1p} }} \frac{r(p^3 n,\tilde{S})}{o(\tilde{S})}\right).
\endaligned
$$
\end{thm}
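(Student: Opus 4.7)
My plan is to apply Proposition \ref{prop2} (the $m\ge 1$ case) to $T$ twice, with $n$ replaced by $pn$ and by $p^3n$, and then to apply Proposition \ref{genTL} with $L=S$ to evaluate the extra term. The crucial simplification underpinning the entire computation is the identity
$$r(p^{2k}n,\Lambda_p(T)) = r(p^{2k-2}n,T) \qquad (k\ge 1),$$
which follows from a bijection $R(p^{2k-2}n,T)\to R(p^{2k}n,\Lambda_p(T))$ given by $y\mapsto py$. Surjectivity uses that for $y\in\Lambda_p(T)$ with $p^2\mid Q(y)$, the local Gram form on $\Lambda_p(T)_p$ forces $p\mid y$, and $p\mid y$ is automatic at any prime $q\ne p$ since $\Lambda_p(T)_q = T_q$ and $p\in\z_q^{\times}$.

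Writing $A(n)=\sum_{[\tilde S]\in\gen(S)}\frac{\tilde r(\tilde S^p,T)}{o(\tilde S)}r(pn,\tilde S)$ and the analogous $B(n)$ for $r(p^3n,\tilde S)$---these sums are supported on $\text{Cspn}(S)$ since $\tilde r(\tilde S^p,T)=0$ for $[\tilde S]$ outside the connected component of $[T]$---Proposition \ref{prop2} combined with $r(pk,T^p)=r(k,T)$ gives
$$r(p^2n,T) = A(n)+r(p^2n,\Lambda_p(T))-2p\,r(n,T),\qquad r(p^4n,T) = B(n)+r(p^4n,\Lambda_p(T))-2p\,r(p^2n,T).$$
The bijection identity reduces these to
$$A(n) = r(p^2n,T)+(2p-1)r(n,T),\qquad B(n) = r(p^4n,T)+(2p-1)r(p^2n,T).$$

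For the extra term, set $K_i = \Gamma_{p,i}(T)^{1/p}\in\mathcal G_{L,p}(0)$ and invoke the first case of Proposition \ref{genTL} with $L=S$ (so $\ord_p(4dS)=2$), $K=K_i$, and $n$ replaced by $p^3n$:
$$o(K_i)\cdot r(p^3n,\gen_p^{K_i}(S)) = p\,r(p^3n,K_i)+\tfrac{p(p-1)}{2}\,r(pn,K_i).$$
Using the scaling $r(p^k n,K_i)=r(p^{k+1}n,\Gamma_{p,i}(T))$ together with the inclusion--exclusion
$$r(N,\Gamma_{p,1}(T))+r(N,\Gamma_{p,2}(T)) = r(N,T)+r(N,\Lambda_p(T))\qquad (p\mid N),$$
valid because $Q(a_1x_1+a_2x_2+a_3x_3)\equiv a_1a_2\pmod p$ in the standard basis so $p\mid N$ forces $R(N,T)\subset\Gamma_{p,1}(T)\cup\Gamma_{p,2}(T)$, one more application of the bijection reduces the extra term $E(n)$ to
$$(p-1)E(n) = p\bigl[r(p^4n,T)+r(p^2n,T)\bigr] + \tfrac{p(p-1)}{2}\bigl[r(p^2n,T)+r(n,T)\bigr].$$

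Substituting these three simplified expressions into the right-hand side $\frac{3p}{2}A(n)-\frac{p}{p-1}B(n)+E(n)$ and collecting coefficients, the $r(p^4n,T)$ and $r(p^2n,T)$ contributions cancel, while the coefficient of $r(n,T)$ becomes $\frac{3p(2p-1)}{2}+\frac{p}{2}=3p^2-p$. The main obstacle is thus essentially bookkeeping: tracking the $\Lambda_p(T)$ contributions through the Watson-type identity and Proposition \ref{genTL} simultaneously so that all auxiliary quantities cancel and leave a clean multiple of $r(n,T)$.
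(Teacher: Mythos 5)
Your proposal is correct, and the arithmetic checks out (the coefficients of $r(p^4n,T)$ and $r(p^2n,T)$ do cancel and the coefficient of $r(n,T)$ is $\frac{3p(2p-1)}{2}+\frac p2=3p^2-p$), but it follows a genuinely different route from the paper. The paper's proof is a \emph{derivation}: it applies Proposition \ref{prop1} simultaneously to all lattices in $\Psi_p(\lambda_p(S))$, observes via Lemma \ref{n=2} that the resulting coefficient matrix $U$ is the incidence matrix of the complete graph on $p+1$ vertices, inverts $U^tU=(p-1)I+J$ to solve for $r(n,T)$ in terms of $U_1(pn,S)$, $U_2(pn,S)$ and $r(n/p,\lambda_p(S))$ (its equation (4.8)), and then combines this with Proposition \ref{genTL} and Proposition \ref{prop2}. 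You bypass Proposition \ref{prop1} and the complete-graph combinatorics entirely and instead \emph{verify} the stated identity: Proposition \ref{prop2} applied at $pn$ and at $p^3n$ evaluates the two weighted sums as $r(p^2n,T)+(2p-1)r(n,T)$ and $r(p^4n,T)+(2p-1)r(p^2n,T)$ (the first of these is exactly the paper's (4.10), which likewise silently uses $r(p^2n,\Lambda_p(T))=r(n,T)$, so your bijection $y\mapsto py$ is a welcome explicit justification of a step the paper leaves implicit), while the extra term is reduced to the same three quantities via Proposition \ref{genTL} at $K_i=\Gamma_{p,i}(T)^{\frac1p}$, the inclusion--exclusion $r(N,\Gamma_{p,1}(T))+r(N,\Gamma_{p,2}(T))=r(N,T)+r(N,\Lambda_p(T))$ for $p\mid N$, and the scaling $r(N,K_i)=r(pN,\Gamma_{p,i}(T))$. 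The trade-off is that the paper's approach explains where the coefficients $\tfrac{3p}{2}$ and $-\tfrac{p}{p-1}$ come from and produces the formula rather than merely confirming it, whereas your argument is shorter, needs less of the $\Phi_p/\Psi_p$ structure theory, and makes the role of $\Lambda_p(T)$ completely transparent; both are complete proofs.
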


\begin{proof} First, we assume that 
$$
\Phi_p(\lambda_p(S))=\{T=T_1,T_2,\dots,T_{p+1}\} \quad \text{and} \quad 
\Psi_p(\lambda_p(S))=\{S=S_1,S_2,\dots,S_{\frac{p(p+1)}2}\}.
$$
Without loss of generality, we may assume that $\lambda_p(S)=\Gamma_{p,1}(T)^{\frac1p}$.
Define, for any integer $n$, 
$$
\mathbf{R}(n,\Phi_p(\lambda_p(S)))=(r(n,T_1),r(n,T_2),\dots,r(n,T_{p+1}))^t
$$
and
$$
\mathbf{R}(n,\Psi_p(\lambda_p(S)))=\left(r(n,S_1),r(n,S_2),\dots,r\left(n,S_{\frac{p(p+1)}2}\right)\right)^t.
$$

We also define a vector $\mathbf{I}(n,\lambda_p(S))=r(n,\lambda_p(S))\cdot (1,1,\dots,1)^t$ of length $\frac{p(p+1)}2$.  
Now by Proposition \ref{prop1}, we have
$$
\mathbf{R}(pn,\Psi_p(\lambda_p(S)))=U \cdot \mathbf{R}(n,\Phi_p(\lambda_p(S)))-\mathbf{I}\left(\frac np,\lambda_p(S)\right),
$$
 where $U^t \in M_{(p+1)\times \frac{p(p+1)}2}(\z)$ is the incidence matrix of the complete graph of order $p+1$ by Lemma \ref{n=2}.
Therefore $U^tU=(p-1)I+J$ and 
$$
((U^tU)^{-1}U^t)_{ij}=\begin{cases} \displaystyle \frac1p \quad &\text{if $r(T_i^p,S_j)\ne 0$},\\
                          
                                            \displaystyle  \frac {-1}{p(p-1)}  \quad &\text{if $r(T_i^p,S_j)=0$}. \end{cases}
$$
Here $J$ is a matrix of ones.  Therefore we have 
\begin{equation}
r(n,T)=\frac1p\sum_{\mathbf 1} r(pn,S) -\frac{1}{p(p-1)}\sum_{\mathbf 2} r(pn,S)+\frac12 r\left(\frac np,\lambda_p(S)\right),
\end{equation} 
where $\sum_{\mathbf 1}$ is the summation of all lattices $S'$ in $\Psi_p(\lambda_p(S))$ such that $r(T^p, S')\ne 0$ and                                   
$\sum_{\mathbf 2}$ is the summation of all lattices $S'$ in $\Psi_p(\lambda_p(S))$ such that $r(T^p, S')=0$. We define, for simplicity, $U_1(pn,S)=\sum_{\mathbf 1} r(pn,S)$ and  $U_2(pn,S)=\sum_{\mathbf 2} r(pn,S)$. Now, by Proposition \ref{genTL},
we have
\begin{equation}
\begin{array} {rl}
p\cdot r(pn,\lambda_p(S))+ \displaystyle \frac{p(p-1)}2 r\left( \displaystyle \frac np,\lambda_p(S)\right)=&o(\lambda_p(S))r(pn,\gen_p^{\lambda_p(S)}(S))\\
 =&\displaystyle \sum_{i=1}^{\frac{p(p+1)}2}r(pn,S_i)\\
 =&U_1(pn,S)+U_2(pn,S).\end{array}
\end{equation}

Let $\widetilde{S}$ be a $\z$-lattice such that $\lambda_p(\widetilde{S})=\Gamma_{p,2}(T)^{\frac1p}$. We may  similarly define 
$\mathbf{R}(n,\Psi_p(\lambda_p(\widetilde{S})))$, $U_1(pn,\widetilde{S})$ and $U_2(pn,\widetilde{S})$.  Then, equations (4.8) and (4.9) hold even if we replace $S$ by $\widetilde{S}$. Furthermore, by Proposition \ref{prop2}, 
\begin{equation}
\begin{array} {rl}
r(p^2n,T)+(2p-1)r(n,T)=& \displaystyle \sum_{[S'] \in \gen(S)} \frac{\tilde{r}((S')^p,T)}{o(S')} r(pn,S')\\
                      =&U_1(pn,S)+U_1(pn,\widetilde{S}).\end{array}
\end{equation}    
By combining (4.8)$\sim$(4.10), we have 
$$
\begin{array} {rl}
\frac{3p^2-p}2r(n,T)=&\!\!\! p(U_1(pn,S)+U_1(pn,\widetilde{S}))-p\left(\frac1p U_1(p^3n,S)-\frac1{p(p-1)}U_2(p^3n,S)\right)\\
                -&\!\!\!\frac{p(p-1)}2\left(\frac1p
                     U_1(pn,S)-\frac1{p(p-1)}U_2(pn,S)\right)\!\!-\!\!\frac12\left(U_1(pn,S)+U_2(pn,S)\right)\\
                      =&\!\!\!\displaystyle \frac p2U_1(pn,S)+pU_1(pn,\widetilde{S})-\left(U_1(p^3n,S)-\frac1{p-1}U_2\left(p^3n,S\right)\right). 
\end{array}
$$
Since the above equation holds even if we exchange $S$ for $\widetilde{S}$, we have
$$
\begin{array} {rl}
(3p^2-p)r(n,T)=\!\!\!&\displaystyle \frac {3p}2\left(U_1(pn,S)+U_1(pn,\widetilde{S})\right)-\frac p{p-1}\left(U_1(p^3n,S)+U_1(p^3n,\widetilde{S})\right)\\
+\!\!\!&\displaystyle \frac1{p-1}\left(U_1(p^3n,S)+U_2(p^3n,S)+U_1(p^3n,\widetilde{S})+U_2(p^3n,\widetilde{S})\right).
\end{array}
$$
This completes the proof.                                          
\end{proof}

\begin{rmk} {\rm In the above theorem, one may easily check that the sets $\Psi_p(\lambda_p(S))$ and $\Psi_p(\lambda_p(\widetilde{S}))$ are contained in $\text{Cspn}(S)$.} 
\end{rmk}

Assume that $m=2$. Recall that $T \in \mathcal G_{L,p}(2)$  and $S \in \mathcal G_{L,p}(3)$ are ternary $\z$-lattices satisfying  $r(T^p,S) \ne 0$. 
If we define $\epsilon_l$ and $\tilde{\mathcal{N}}$ as before for the $E$-type, then Lemma \ref{rankN} still holds under this situation.


\begin{thm} \label{rel2} Let $T$ and $S$ be ternary $\z$-lattices satisfying all conditions given above. Assume that the graph $\mathfrak G_{L,p}(2)$ is of $O$-type. If $n$ is not divisible by $p$, then
we have 
\begin{equation}
\mathbf{R}(n,\text{spn}(T)) =(\mathcal{N}\cdot \mathcal{N}^t)^{-1} \mathcal{N}\cdot \mathbf{R}(pn,\spn(S)). \\\\
\end{equation}
If $n$ is divisible by $p$, then $\mathbf{R}(n,\text{spn}(T))$ is equal to  
$$
\frac1{2p-1}\left(\mathcal{M}\cdot \mathbf R(pn,\text{spn}(S))-(\mathcal{N}\cdot \mathcal{N}^t)^{-1}\mathcal{N}\cdot (\mathbf R(p n,\text{spn}(S))+\mathbf R(p^3 n,\text{spn}(S)))\right).
$$
If  $\mathfrak G_{L,p}(2)$ is of $E$-type, then we have
$$
\mathbf{R}(n, \text{Cspn}(T)) =\begin{cases} (\tilde{\mathcal{N}}\cdot \tilde{ \mathcal{N}}^t)^{-1} \tilde{\mathcal{N}}\cdot \widetilde{\mathbf{R}}_1 & \text{if } p \nmid n,\\\\ \displaystyle \frac 1 {2p-1}~ \left( \mathcal{M}\cdot \mathbf{R}(pn,\text{spn}(S))-(\tilde{\mathcal{N}}\cdot \tilde{\mathcal{N}}^t )^{-1}\tilde{\mathcal{N}}\cdot \widetilde{\mathbf{R}}_2\right) & \text{otherwise}, \end{cases}
$$
where 
{\small $$
\widetilde{\mathbf{R}}_1 \!=\! \left( \begin{array}{c} \\ \mathbf{R}(pn, \spn(S)) \\\\ r(n, \spn(T))-r(n, \spn(\tilde{T}))\end{array}\right),  \ \widetilde{\mathbf{R}}_2 \!=\!\left( \begin{array}{c} \\ \mathbf{R}(pn, \spn(S)) + \mathbf{R}(p^3 n , \spn(S)) \\\\ (2p-1)(r(n, \spn(\tilde{T}))-r(n, \spn({T})))\end{array}\right).
$$}
\end{thm}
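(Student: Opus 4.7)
The plan mirrors Theorems \ref{rel0O} and \ref{rel0E}. I would begin by recording the vectorized forms of Propositions \ref{prop1} and \ref{prop2}:
\begin{align*}
\mathrm{(I)}&:\ \mathcal{N}^t\mathbf{R}(n,\spn(T)) = \mathbf{R}(pn,\spn(S)) + \mathbf{R}^{\sharp}(pn,\spn(\Lambda_p(S))),\\
\mathrm{(II)}&:\ \mathbf{R}(pn,\spn(T)) = \mathcal{M}\mathbf{R}(n,\spn(S)) + \mathbf{R}^{\sharp}(pn,\spn(\Lambda_p(T))) - 2p\,\mathbf{R}(n,\spn(T^p)).
\end{align*}
The decisive local fact at $m=2$ is that each $\Lambda_p(T_i)$ and each $\Lambda_p(S_j)$ has norm ideal contained in $p^2\z$. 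Consequently all entries of $\mathbf{R}^{\sharp}(pn,\spn(\Lambda_p(\cdot)))$ and of $\mathbf{R}(n,\spn(T^p))$ vanish whenever $p\nmid n$; when $p\mid n$ we have the scaling identities $r(p^{k+2}n,\Lambda_p(X)) = r(p^k n,\lambda_p(X))$ for $X\in\{T,S\}$ and $r(pn,T^p)=r(n,T)$.

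For the $O$-type case with $p\nmid n$, (I) collapses to $\mathcal{N}^t\mathbf{R}(n,\spn(T))=\mathbf{R}(pn,\spn(S))$. The $O$-type of $\mathfrak{G}_{L,p}(2)$, inherited from $\mathfrak{G}_{L,p}(0)$ via Lemma \ref{connect}, forces $\mathcal{N}$ to have full column rank $u$ by the odd-cycle argument of Theorem \ref{equicon}; inverting $\mathcal{N}\mathcal{N}^t$ and left-multiplying by $(\mathcal{N}\mathcal{N}^t)^{-1}\mathcal{N}$ gives the first formula.

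For the $O$-type case with $p\mid n$, substituting $n\mapsto pn$ in (II) and invoking the scaling identities recasts (II) as $2p\,\mathbf{R}(n,\spn(T)) + \mathbf{R}(p^2 n,\spn(T)) = \mathcal{M}\,\mathbf{R}(pn,\spn(S)) + \mathbf{R}(n,\spn(\lambda_p(T)))$. Applying (I) at the shifts $n$ and $p^2 n$ expresses $\mathbf{R}(n,\spn(T))$ and $\mathbf{R}(p^2 n,\spn(T))$ in terms of $\mathbf{R}(pn,\spn(S))$, $\mathbf{R}(p^3 n,\spn(S))$, and the corrections $\mathbf{R}^{\sharp}(pn,\spn(\Lambda_p(S))), \mathbf{R}^{\sharp}(p^3 n,\spn(\Lambda_p(S)))$. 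The essential bookkeeping identity to establish is
$$
\mathcal{N}^t\mathbf{R}(n,\spn(\lambda_p(T))) = \mathbf{R}^{\sharp}(pn,\spn(\Lambda_p(S))) + \mathbf{R}^{\sharp}(p^3 n,\spn(\Lambda_p(S))),
$$
whose $j$-th component is exactly Proposition \ref{prop1} applied at the $m=0$ level to $\mu=\lambda_p(S_j)\in\mathcal{G}_{L,p}(1)$, combined with the $m=0$ involutive identity $r(p^2 k,\Lambda_p(\mu))=r(k,\mu)$ used to rewrite $r(pn,\Lambda_p(\lambda_p(S_j)))=r(n/p,\lambda_p(S_j))$. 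Once this identity is in hand, the $\lambda_p(T)$-term and the two $\Lambda_p(S)$-corrections coalesce into $(\mathcal{N}\mathcal{N}^t)^{-1}\mathcal{N}[\mathbf{R}(pn,\spn(S))+\mathbf{R}(p^3 n,\spn(S))]$, and combining $-2p$ from (II) with $+1$ from the unshifted (I) produces the prefactor $2p-1$.

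For the $E$-type case, $\mathcal{N}$ has column rank $u-1$ by the bipartite structure of the connected components of $\mathfrak{G}_{L,p}(2)$, so $\mathcal{N}\mathcal{N}^t$ is singular. Following the augmentation strategy of Theorem \ref{rel0E}, I would append the spinor-weight entries $\epsilon_i$ to $\mathcal{N}$ to form $\tilde{\mathcal{N}}$; full column rank of $\tilde{\mathcal{N}}$ follows from the same bipartite argument as Lemma \ref{rankN}. Appending the spinor-difference entry $r(n,\spn(T))-r(n,\spn(\tilde T))$ (for $p\nmid n$) or $(2p-1)\bigl(r(n,\spn(\tilde T))-r(n,\spn(T))\bigr)$ (for $p\mid n$) to the right-hand side of the relevant equation and repeating the preceding arguments with $\tilde{\mathcal{N}}$ in place of $\mathcal{N}$ produces the remaining two formulas. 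The main obstacle is establishing the bookkeeping identity above, in particular verifying the coefficient-matching $r(T_k^p,S_j) = r(\lambda_p(T_k)^p,\lambda_p(S_j))$ which reflects the functoriality of $\lambda_p$ with respect to the graph structure, consistent with Lemma \ref{connect}.
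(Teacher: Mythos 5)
Your argument is correct and follows essentially the same route as the paper's proof: the two vectorized identities (I) and (II), their shifts by $p$ and $p^2$, the scaling identities relating $\Lambda_p$ to $\lambda_p$ together with $\lambda_p^3(S)\simeq\lambda_p(S)$, and your ``bookkeeping identity'' is precisely the paper's equations $(4.14)$--$(4.15)$ combined with $(4.17)$, which the paper obtains from Proposition \ref{prop1} at level $0$ via the compatibility $(\Gamma_{p,1}(\lambda_p(S))^{\frac1p},\Gamma_{p,2}(\lambda_p(S))^{\frac1p})\simeq(\lambda_p(T_1),\lambda_p(T_2))$ rather than the literal equality $r(T_k^p,S_j)=r(\lambda_p(T_k)^p,\lambda_p(S_j))$ (which is the safer formulation, since $\lambda_p$ need not be injective on classes). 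The $E$-type case is dispatched in the paper exactly as you propose, by substituting $\tilde{\mathcal N}$ for $\mathcal N$ and appending the spinor-difference entry.
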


\begin{proof}
The proof is similar to that of Theorem \ref{rel0O}. First, assume that $\mathfrak G_{L,p}(2)$ is of $O$-type. Since the rank of $\mathcal N$ is $u$, we may define  $\mathcal Z=(\mathcal{N}\cdot \mathcal{N}^t)^{-1}\mathcal{N}$.
From the equation (4.3), we have
\begin{equation}
 \mathbf{R}(n,\spn(T))=\mathcal Z\left(\mathbf{R}(pn,spn(S))+\mathbf{R}^{\sharp}\left(\frac n p, \spn(\lambda_p (S))\right)\right),
\end{equation}
and 
\begin{equation}
 \mathbf{R}(p^2n,\spn(T))=\mathcal Z\left(\mathbf{R}(p^3n,spn(S))+\mathbf{R}^{\sharp}\left(pn, \spn(\lambda_p (S))\right)\right).
\end{equation}
If $(\Gamma_{p,1}(S)^{\frac 1 p},\Gamma_{p,2}(S)^{\frac 1 p}) \simeq ( T_1, T_2)$, then 
$$
(\Gamma_{p,1}(\lambda_p(S))^{\frac 1 p}, \Gamma_{p,2}(\lambda_p(S))^{\frac 1 p}) \simeq (\lambda_p(T_1),\lambda_p(T_2)).
$$ 
Hence  we have
\begin{equation}
\mathbf{R}^{\sharp}(pn,\spn(\lambda_p(S))) = \mathcal{N}^t\cdot \mathbf{R}^{\sharp}(n,\spn(\lambda_p(T)))-\mathbf{R}^{\sharp}(n,\spn(\lambda_p^2(S))),
\end{equation}
that is,
\begin{equation}
\mathbf{R}^{\sharp}(n,\spn(\lambda_p(T)))=\mathcal Z(\mathbf{R}^{\sharp}(pn,\spn(\lambda_p(S)))+\mathbf{R}^{\sharp}(n,\spn(\lambda_p^2(S))).
\end{equation}
By Proposition \ref{prop2}, we also have
\begin{equation}
\mathbf{R}(p^2 n,\spn(T)) + 2p~ \mathbf{R}(n,\spn(T)) = \mathcal{M} \cdot \mathbf{R}(pn,\spn(S)) + \mathbf{R}^{\sharp}(n,\spn(\lambda_p(T))).
\end{equation} 
If $n$ is not divisible by $p$, then (4.11) comes directly  from (4.12). Assume that $n$ is divisible by $p$. Since 
$\lambda_p^3(S) \simeq \lambda_p(S)$, we have 
\begin{equation}
\mathbf{R}^{\sharp}\left(\frac n p, \spn(\lambda_p (S))\right)=\mathbf{R}^{\sharp}(n,\spn(\lambda_p^2(S))).
\end{equation}
Therefore, the theorem follows from equations $(4.12), (4.13), (4.15)$ and $(4.16)$.

If we replace $\mathcal{N}$ by $\tilde{\mathcal{N}}$, then the proof of the case when $\mathfrak G_{L,p}(2)$ is of $E$-type is quite similar to this. 
\end{proof}


\begin{exam}
Let $p = 3$ and let $L=\langle 1,1,2\rangle$. Then $T=\langle 1,2,9\rangle \in \mathcal G_{L,p}(2)$ and  $S_1=\langle 1,2,27\rangle \in \mathcal G_{L,p}(3)$. In fact,  the graph $\mathfrak G_{L,p}(2)$ is of $O$-type and
$$
\mathcal G_{L,p}(3)/\sim=
\Bigg\{ S_1
, ~ S_2= \begin{pmatrix} 3&1&1\\1&4&2\\1&2&6\end{pmatrix},~ S_3=  \begin{pmatrix} 1&0&0\\0&5&1\\0&1&11\end{pmatrix},~ S_4=\begin{pmatrix} 2&0&0\\0&4&1\\0&1&7\end{pmatrix}\Bigg\}.
$$
In this case, one may easily check that there are no rational numbers $a_i$ and $b_i$ satisfying the equation
$$
r(n,T) = \sum_{i=1}^{4} a_i\cdot r(3n, S_i) + \sum_{i=1}^{4} b_i\cdot r(27n,S_i) \quad \text{for any integer $n$}.
$$ 
\end{exam}


Finally, assume that $m \ge 3$. Let $T \in \mathcal G_{L,p}(m)$ and $S\in \mathcal G_{L,p}(m+1)$ be $\z$-lattices such that $r(T^p, S) \ne 0$. We additionally assume that $\mathfrak G_{L,p}(m)$ is of $O$-type. Recall that $\mathcal{M} = \left( \frac{r(T_i^p,S_j)}{o(T_i)} \right)$ and $\mathcal{N} = \left( \frac{r(T_i^p,S_j)}{o(S_j)} \right)$.  We define $\mathcal Z=(\mathcal{N}\mathcal N^t)^{-1}\mathcal N$.

\begin{thm}\label{diff} Under the assumptions given above, if $n$ is not divisible by $p$, then 
$$
\mathbf{R}(n,\spn(T)) = \mathcal{Z}\left(\mathbf{R}(pn,\spn(S))\right)\ \  \text{and} \ \
\mathbf{R}(pn,\spn(T)) =\mathcal{M} \cdot \mathbf{R}(n,\spn(S)).
$$
For an arbitrary integer $n$, we have
{\small $$ 
\aligned 
&p \mathbf{R}(p^2 n,\spn(T))-p^2 \mathbf{R}(n,\spn(T))\\
&= \mathcal{Z} \left( 2p \mathbf{R}(p^3n,\spn(S)) + p^2 \mathbf{R}(pn,\spn(S))+\mathbf{R}^{\flat}(pn, \spn(S))\right)-p \mathcal{M}\cdot \mathbf{R}(pn,\spn(S)),
 \endaligned
$$}
where {\small $$\mathbf{R}^{\flat}(pn, \spn(S))=\left(\frac{o(\lambda_p(S_1))}{o(S_1)}r(pn,\gen_p^{\lambda_p(S_1)}(S_1)),\dots,\frac{o(\lambda_p(S_v))}{o(S_v)}r(pn,\gen_p^{\lambda_p(S_v)}(S_v))\right)^t.$$}
\end{thm}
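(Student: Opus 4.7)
The overall plan is to iterate Propositions~\ref{prop1} and~\ref{prop2} at the shifted arguments $n$, $pn$, and $p^2n$, multiply each Proposition~\ref{prop1} identity on the left by $\mathcal Z=(\mathcal N\mathcal N^t)^{-1}\mathcal N$, and combine the resulting equations to eliminate the auxiliary representation numbers that involve $\lambda_p(T)$ and $\lambda_p(S)$. The key preliminary observation is: for any ternary $\z$-lattice $X\in\mathcal G_{L,p}(k)$ with $k\ge 2$ one has $\Lambda_p(X)=\lambda_p(X)^{p^2}$, so
\[
r(pk',\Lambda_p(X))=r(k'/p,\lambda_p(X))
\]
for every integer $k'$, which vanishes unless $p\mid k'$.

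For the first part ($p\nmid n$), the observation immediately kills the error terms on the right of both propositions: Proposition~\ref{prop1} collapses to $\mathbf R(pn,\spn(S))=\mathcal N^t\mathbf R(n,\spn(T))$, and applying $\mathcal Z$ gives the first identity. Proposition~\ref{prop2} likewise collapses to $\mathbf R(pn,\spn(T))=\mathcal M\,\mathbf R(n,\spn(S))$, noting also that $r(n,T_i^p)=0$ when $p\nmid n$.

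For arbitrary $n$, introduce the auxiliary vectors $A(n)$, $B(n)$, $\Gamma(n)$ with $A_i(n)=r(n,\lambda_p(T_i))/o(T_i)$, $B_j(n)=r(n,\lambda_p(S_j))/o(S_j)$, and $\Gamma_j(n)=r(n,\lambda_p^2(S_j))/o(S_j)$. Proposition~\ref{prop2} applied to $pn$ yields
\begin{equation*}
\mathbf R(p^2n,\spn(T))=\mathcal M\,\mathbf R(pn,\spn(S))+A(n)-2p\,\mathbf R(n,\spn(T)),
\end{equation*}
Proposition~\ref{prop1} applied to $p^2n$ (premultiplied by $\mathcal Z$) yields
\begin{equation*}
\mathbf R(p^2n,\spn(T))=\mathcal Z\,\mathbf R(p^3n,\spn(S))+\mathcal Z\,B(pn),
\end{equation*}
and Proposition~\ref{prop1} applied to $n$ (premultiplied by $\mathcal Z$) gives $\mathcal Z\,\mathbf R(pn,\spn(S))=\mathbf R(n,\spn(T))-\mathcal Z\,B(n/p)$. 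The ``otherwise'' case of Proposition~\ref{genTL}, applicable here since $\ord_p(4\cdot dS_j)=m+1\ge 4$, expands
\begin{equation*}
\mathbf R^{\flat}(pn,\spn(S))=p\,B(pn)+p^2\,B(n/p)-p\,\Gamma(n/p).
\end{equation*}
Substituting these four identities into $p\,\mathbf R(p^2n,\spn(T))-p^2\,\mathbf R(n,\spn(T))$ and simplifying reduces the theorem to the single residual identity $A(n)=\mathcal Z\bigl(B(pn)+\Gamma(n/p)\bigr)$.

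This residual identity I plan to obtain by applying Proposition~\ref{prop1} one level lower, to the pair $(\lambda_p(T_i),\lambda_p(S_j))\in\mathcal G_{L,p}(m-2)\times\mathcal G_{L,p}(m-1)$, which should produce $B(pn)=\mathcal N^t A(n)-\Gamma(n/p)$; multiplying on the left by $\mathcal Z$ then yields exactly the desired identity. The main obstacle is verifying that Proposition~\ref{prop1} applied at the lower level uses the \emph{same} matrix $\mathcal N$ (together with the equality $o(\lambda_p(X))=o(X)$ for the lattices at hand). This in turn rests on showing that the Watson transformation $\lambda_p$ induces a graph isomorphism $\mathfrak G_{L,p}(m)\cong\mathfrak G_{L,p}(m-2)$ (in the spirit of Lemma~\ref{connect}) that is functorial on the relevant sublattice embeddings — a compatibility that has to be checked directly from the local $p$-adic Jordan data.
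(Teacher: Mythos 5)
Your argument is correct and is essentially the paper's own proof of Theorem \ref{diff}: the four identities you isolate are exactly the paper's equations (4.18)--(4.21), and your residual identity $A(n)=\mathcal Z\bigl(B(pn)+\Gamma(n/p)\bigr)$ is precisely what the paper obtains by left-multiplying its equation (4.20), namely $B(pn)=\mathcal N^t A(n)-\Gamma(n/p)$, by $\mathcal Z$. One correction to how you justify that last step: a graph isomorphism $\mathfrak G_{L,p}(m)\cong\mathfrak G_{L,p}(m-2)$ cannot exist in general (the fibers of $\lambda_p$ on lattices have $p^2$ elements and the two genera typically have different class numbers), and none is needed --- all that is required is the local compatibility $\bigl(\Gamma_{p,1}(\lambda_p(S))^{\frac1p},\Gamma_{p,2}(\lambda_p(S))^{\frac1p}\bigr)\simeq\bigl(\lambda_p(\Gamma_{p,1}(S)^{\frac1p}),\lambda_p(\Gamma_{p,2}(S)^{\frac1p})\bigr)$ already invoked in the proof of Theorem \ref{rel2}, which suffices because the coefficient $r(T_i^p,S_j)/o(T_i)$ in Proposition \ref{prop1} merely counts how many of the two distinguished index-$p^2$ sublattices of $S_j$ lie in the class of $T_i^p$, and $\lambda_p$ preserves that count, so the same matrix $\mathcal N$ reappears one level down.
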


 \begin{proof}  
     By Propositions \ref{prop1} and \ref{prop2}, we  have
\begin{equation}
\mathbf{R}(pn,\spn(S)) = \mathcal{N}^t \cdot \mathbf{R}(n,\spn(T)) - \mathbf{R}^{\sharp}\left(\frac np, \spn(\lambda_p(S))\right),
\end{equation}
and 
\begin{equation}
\aligned
\mathbf{R}(pn,\spn(T)) &=\mathcal{M} \cdot \mathbf{R}(n,\spn(S))\\  
     &+ \mathbf{R}^{\sharp}\left(\displaystyle\frac np, \spn(\lambda_p(T))\right)-2p \cdot \mathbf{R}\left(\displaystyle\frac np , \spn(T)\right).
     \endaligned
\end{equation}
The first two equations follow directly from (4.18) and (4.19). 

Now by applying $\lambda_p$-transformation to the equation (4.18), we also have
\begin{equation}
\mathbf{R}^{\sharp}(pn,\spn(\lambda_p(S))) = \mathcal{N}^t \cdot \mathbf{R}^{\sharp}(n,\spn(\lambda_p(T))) - \mathbf{R}^{\sharp}\left(\frac np, \spn(\lambda_p^2(S))\right).
\end{equation}
Our final ingredient is the following equation which is directly obtained  from Proposition \ref{genTL}:
\begin{equation}
\aligned  p\mathbf{R}^{\sharp}(pn,\spn(\lambda_p(S))) &+ p^2 \mathbf{R}^{\sharp}\left(\frac np, \spn(\lambda_p(S))\right) - p \mathbf{R}^{\sharp}\left(\frac np, \spn(\lambda_p^2(S))\right)\\  &=  \mathbf{R}^{\flat}(pn, \spn(S)).\endaligned
\end{equation}
 By multiplying $\mathcal{Z}$ to (4.18), we have 
\begin{displaymath}
\mathbf{R}(n,\spn(T)) = \mathcal{Z}\left(\mathbf{R}(pn,\spn(S)) + \mathbf{R}^{\sharp}\left(\frac np, \spn(\lambda_p(S))\right)\right).
\end{displaymath}
Hence we have
{\small $$
\aligned 
2p\mathbf{R}(p^2 n,\spn(T))&+p^2 \mathbf{R}(n,\spn(T))=2p\mathcal{Z} \left(\mathbf{R}(p^3 n,\spn(S)) +\mathbf{R}^{\sharp}(pn,\spn(\lambda_p(S)))\right) \\
          &\qquad \qquad+p^2 \mathcal Z \left(\mathbf{R}(pn, \spn(S)) + \mathbf{R}^{\sharp}\left(\frac np, \spn(\lambda_p(S))\right)\right).
          \endaligned
$$}
On the other hand, by combining (4.19) and (4.20), we have
$$
\aligned 
\mathbf{R}(p^2 n, \spn(T))+ &2p \mathbf{R}(n,\spn(T))-\mathcal{M} \cdot \mathbf{R}(pn,\spn(S)) \\
                           &=\mathcal Z \left(\mathbf{R}^{\sharp}(pn, \spn(\lambda_p (S))) + \mathbf{R}^{\sharp}\left(\frac np, \spn(\lambda_p^2(S))\right)\right).
\endaligned
$$ 
The theorem follows from the above two equations and (4.21). 
\end{proof}

\end{document}